\theoremstyle{plain}
  \newtheorem{thm}{Theorem}[section]
  \newtheorem{defn}[thm]{Definition}
  \newtheorem{prop}[thm]{Proposition}
  \newtheorem{lem}[thm]{Lemma}
\theoremstyle{definition}
  \newtheorem{example}[thm]{Example}
  \newtheorem{rem}[thm]{Remark}
\newcommand{\on}{\operatorname}
\newcommand{\s}{\mathfrak{s}}
\newcommand{\g}{\mathfrak{g}}
\newcommand{\h}{\mathfrak{h}}
\renewcommand{\a}{\mathfrak{a}}
\renewcommand{\b}{\mathfrak b}
\newcommand{\la}{\langle}
\newcommand{\ra}{\rangle}
\newcommand{\gric}{\mathrm{GRic}}
\newcommand{\redu}{\mathrm{red}}
\newcommand{\pin}{\mathrm{Pin}}
\newcommand{\cl}{\on{Cl}}
\newcommand{\R}{\mathbb{R}}
\newcommand{\C}{\mathbb{C}}
\newcommand{\Z}{\mathbb{Z}}
\newcommand{\dv}{\on{div}}
\newcommand{\bw}{{\textstyle\bigwedge}}
\title[Courant algebroids, PL T-duality, and type II supergravities]{Courant algebroids, Poisson-Lie T-duality, and type II supergravities}
\thanks{Supported in part by  the grant MODFLAT of the European Research Council and the NCCR SwissMAP of the Swiss National Science Foundation.}
\author{Pavol \v{S}evera}
\address{Section of Mathematics, University of Geneva, Switzerland}
\email{pavol.severa@gmail.com}
\author{Fridrich Valach}
\address{Section of Mathematics, University of Geneva, Switzerland}
\email{fridrich.valach@gmail.com}
\begin{document}
\maketitle
\begin{abstract}
We reexamine the notions of generalized Ricci tensor and scalar curvature on a general Courant algebroid, reformulate them using objects natural w.r.t.\ pull-backs and reductions, and obtain them from the variation of a natural action functional. This allows us to prove, in a very general setup, the compatibility of the Poisson-Lie T-duality with the renormalization group flow and with string background equations. We thus extend the known results to a much wider class of dualities, including the cases with gauging (so called dressing cosets, or equivariant Poisson-Lie T-duality). As an illustration, we use the formalism to provide new classes of solutions of modified supergravity equations on symmetric spaces.
\end{abstract}

\tableofcontents

\section{Introduction}

Given a Riemannian metric $g$ and a closed 3-form $H$ on a manifold $M$ one can define a 2-dimensional $\sigma$-model with the target $M$. If $f:\Sigma\to M$ is a smooth map from a surface $\Sigma$ with a (pseudo)conformal structure, its action is
\begin{equation}\label{sigmaaction}
S(f)=\int_\Sigma g(\partial f,\bar\partial f)+\int_Yf^*H
\end{equation}
where $Y$ is a 3-fold with boundary $\Sigma$ and $f$ is extended to $Y$.

The pair $(g,H)$ is equivalent to a generalized metric in an exact Courant algebroid (CA). If in place of an exact CA we use an arbitrary CA $E$, a generalized metric in $E$ can still be used to define a 2-dim $\sigma$-model 
 \cite{SCS}. These more exotic 2-dim $\sigma$-models are the basis of \emph{Poisson-Lie T-duality}: if a suitable pull-back of the CA $E$ turns it to an exact CA, the exotic $\sigma$-model turns to  a  $\sigma$-model of the type \eqref{sigmaaction} for a certain $(M,g,H)$, and if there are several different suitable pullbacks of $E$ then all these $\sigma$-models are isomorphic as Hamiltonian systems (up to finitely many degrees of freedom).

If one adds the possibility of gauging, corresponding to reductions of equivariant CAs, the above phenomenon generalizes to \emph{equivariant Poisson-Lie T-duality} (introduced as ``dressing cosets'' in \cite{KS3}), significantly increasing the number of examples.

Not only $g$ and $H$, but also other massless fields appearing in string theory,
such as the dilaton and the Ramond-Ramond fields, can be conveniently described in terms of exact CAs \cite{CSCW} (a similar description works for the gauge field in the type I and heterotic case \cite{MGF2} where one needs certain transitive CAs). The string background equations and the corresponding action functionals look cleaner when seen through this perspective.

A natural question  is whether one can define a string effective action functional, dilaton, Ramond-Ramond fields, etc., for arbitrary (non-exact) CAs, in a way that would prove that Poisson-Lie T-duality is compatible with  string background equations, with Ricci flow (renormalization group flow), etc.

We solve this problem as follows. For any CA $E$ and any generalized metric $V_+\subset E$ we define a Laplacian $\Delta_{V_+}$ acting on half-densities, and consider the ``generalized string effective action functional'' $S_E(V_+,\sigma)=-\frac{1}{2}\int\sigma\Delta_{V_+}\sigma$ (where $\sigma$ is a half-density). The gradient flow of this functional (for a fixed $\sigma$) on the space of generalized metrics is the generalized Ricci flow, and the Euler-Lagrange equations of $S_E$ are the generalized string background equations, with $\sigma$ playing the role of the dilaton. Ramond-Ramond fields enter the picture as spinors and de Rham's $d$ is replaced with the generating Dirac operator from \cite{AX}.

For many purposes one can replace the half-density/dilaton by a divergence operator \cite{MGF}. Divergence operators, just as generalized metrics and spinors, are  well behaved under pull-backs and reductions of CAs, i.e.\ operations involved in  Poisson-Lie T-duality. Therefore, once we reformulate the string background equations or the Ricci flow in these terms, it is trivial to prove the above compatibility. 

Such results were only known in special cases \cite{VKS,SV,JV2} and required extensive calculations. The advantage of our approach is that it holds in the general case (including the equivariant Poisson-Lie T-duality) and at the same time gives simple proofs.


A part of the motivation for this work was to understand from the perspective of equivariant Poisson-Lie T-duality the $\eta$-deformed $AdS_5\times S^5$, found in \cite{DMV}, which is a solution of modified type IIB SUGRA equations \cite{AFHRT}. As an example of application of our formalism, we work out several new families of similar solutions, with one or more parameters.

An interesting open problem is to reconcile our approach via CAs (having predecessors e.g.\ in \cite{let,CG,CSCW,PLC,SCS,SV,JV,JV2}) with the Double Field Theory (DFT) approach of \cite{HZ,HKZ,H}. For the moment the mathematical content of DFT escapes us.

The article as structured as follows. We start in Section 2 by reviewing the notion of Courant algebroids, generalized metrics and divergences. Then, in Section 3 we present the generalized Ricci tensor and flow and in Section 4 the Laplacian $\Delta_{V_+}$, the generalized string effective action, and the generalized scalar curvature. Section 5 reviews the concept of Poisson-Lie T-duality, including also the cases of spectators and dressing cosets. It also contains proofs of the compatibility of the Poisson-Lie T-duality with string background equations in the cases of bosonic, type I, and heterotic string theory. Section 6 is devoted to the formalism needed to deal with the Ramond-Ramond fields. This allows us to extend the results of Section 5 to the type II string theory, which is done in Section 7. Finally, in Section 8, we construct the aforementioned families of solutions of modified supergravity equations of motion.

\section{Courant algebroids}
In this section we summarize some basic definitions and facts concerning Courant algebroids.

\subsection{General and exact Courant algebroids}
Courant algebroids, introduced in \cite{lxw}, are a generalization of Lie algebras with an invariant inner product. By definition, a \emph{Courant algebroid}  (CA) is a vector bundle $E\to M$ endowed with an non-degenerate symmetric bilinear form $\la\,,\,\ra$ on its fibres, with a vector bundle map $\rho:E\to TM$ called the \emph{anchor}, and with a $\R$-bilinear map $[\,,\,]:\Gamma(E)\times\Gamma(E)\to\Gamma(E)$ such that for all $u,v,w\in\Gamma(E)$ and $f\in C^\infty(M)$
\begin{align*}
[u,[v,w]]&=[[u,v],w]+[v,[u,w]]\\
[u,fv]&=f[u,v]+(u\cdot f)v\\
u\cdot\la v,w\ra&=\la[u,v],w\ra+\la v,[u,w]\ra\\
[u,v]+[v,u]&=d_E\la u,v\ra,
\end{align*}
where $u\cdot f:=\rho(u)f$ and $d_Ef\in\Gamma(E)$ is given by $\la u,d_Ef\ra=u\cdot f$.

A CA is \emph{exact} if the sequence
\begin{equation}\label{exact}
0\to T^*M\xrightarrow{\rho^t}E\xrightarrow{\rho}TM\to 0
\end{equation}
is exact. Exact CAs are classified by $H^3(M,\R)$: if we split the exact sequence \eqref{exact} so that $TM\subset TM\oplus T^*M\cong E$ is $\la\,,\,\ra$-isotropic then the 3-form $H\in\Omega^3(M)$ given by
$$H(u,v,w):=\la[u,v],w\ra\quad\forall u,v,w\in\Gamma(TM)\subset\Gamma(E)$$
is closed and its cohomology class is independent of the splitting. The Courant bracket $[,]$ on $E\cong TM\oplus T^*M$ is
$$[(u,\alpha),(v,\beta)]=\bigl([u,v],\mathcal L_u\beta-\iota_v d\alpha+H(u,v,\cdot)\bigr)\quad\forall u,v\in\Gamma(TM),\alpha,\beta\in\Gamma(T^*M)$$
and $\la\cdot,\cdot\ra$ is given by the natural pairing on $TM\oplus T^*M$.

\begin{example}\label{ex:exGH}
Let $\g$ be a Lie algebra with a non-degenerate invariant symmetric pairing $\la,\ra$ (i.e.\ $\g$ is a CA over a point), $\mathsf G$ a connected Lie group integrating $\g$, and $\mathsf H\subset\mathsf G$ a Lie subgroup such that $\h^\perp=\h$ (i.e.\ $\h\subset\g$ is a Lagrangian Lie subalgebra). Then the trivial vector bundle $\g\times\mathsf G/\mathsf H\to\mathsf G/\mathsf H$ is naturally an exact CA: the pairing and the bracket of constant sections are the same as in $\g$, and the anchor map $\rho$ is the action of $\g$ on $\mathsf G/\mathsf H$.

Exact CAs of this type play an important role in Poisson-Lie T-duality.
\end{example}

\subsection{Generalized (pseudo)metrics and divergences}
Let $E$ be a CA. We say that a subbundle $V_+\subset E$ is a \emph{generalized pseudometric} if $\la\cdot,\cdot\ra|_{V_+}$ is non-degenerate. We define $V_-:=V_+^\perp$.
In the special case when $\la\cdot,\cdot\ra|_{V_+}$ is positive-definite and $\la\cdot,\cdot\ra|_{V_-}$ is negative definite, we shall say $V_+$ is a \emph{generalized metric}.

Throughout the text we will identify $V_+, V_-$ (and also $E$) with their duals using $\la\cdot,\cdot\ra$. Furthemore, we will denote the orthogonal projections to $V_\pm$ by the subscripts $\pm$ and will also frequently write $a_+$ and $b_-$, instead of just $a$ and $b$, for sections (or elements) in $V_+$ and $V_-$, respectively. For example, $[a_+,b_-]_+$ denotes the projection of $[a_+,b_-]$ to $V_+$.

If $E$ is exact and $V_+\subset E$ is a generalized metric then  $\on{rank} V_+=\frac{1}{2}\on{rank} E$ and there exists a unique splitting $E\cong TM\oplus T^*M$ such that $V_+$ is the graph of a Riemannian metric $g$, i.e.
\[V_+=\{v +g(v,\cdot)\mid v\in \Gamma(TM)\}.\] Thus, a generalized metric in an exact CA is the same as a pair $(g,H)$, where $H$ is the closed 3-form given by the splitting. The data $(g,H)$ is what is needed to define a 2-dimensional $\sigma$-model with the target $M$.

\begin{example}\label{ex:V+GH}
Continuing Example \ref{ex:exGH}, if $V_+\subset\g$ is a generalized metric then the constant subbundle $V_+\times\mathsf G/\mathsf H$ of the exact CA $\g\times\mathsf G/\mathsf H$ is a generalized metric, and thus gives rise to a pair $(g,H)$ on $\mathsf G/\mathsf H$ and can be used to define a 2-dim $\sigma$-model with the target $\mathsf G/\mathsf H$. 

Poisson-Lie T-duality (in its simplest form) is the statement that this $\sigma$-model, as a Hamiltonian system, is (basically) independent of $\mathsf H$ and can be formulated in terms of $V_+\subset\g$ only.
\end{example}

More generally, an exact CA $E$ together with a generalized pseudometric $V_+$ transverse to $T^*M$ and satisfying $\on{rank} V_+=\frac{1}{2}\on{rank} E$ is equivalent to a pair $(g,H)$ of a pseudo-Riemannian metric and a closed 3-form.

Following \cite{AX,MGF}, a \emph{divergence} on $E$ is a  $\R$-linear map $\dv:\Gamma(E)\to C^\infty(M)$ such that 
\[
  \dv(fa)=f\dv a+a\cdot f\quad(\forall a\in\Gamma(E),f\in C^\infty(M)).
\]
Divergences form an affine space over $\Gamma(E)$: if $\dv$ and $\dv'$ are divergences then $\dv-\dv'=\la e,\cdot\ra$ for a section $e$ of $E$. 

If $\mu$ is an everywhere non-zero density on $M$ then $\dv_\mu a:=\mu^{-1}L_{\rho(a)}\mu$ is a divergence.

\begin{prop}\label{prop:dere}
If $\dv$ is a divergence on a CA $E\to M$ and $\mu$ a density on $M$, and if $e\in\Gamma(E)$ is given by $\dv-\dv_\mu=\la e,\cdot\ra$ then the derivation $[e,\cdot]$ of $\Gamma(E)$ is independent of $\mu$. For $a,b\in\Gamma(E)$ and $f\in C^\infty(M)$ we have
$$
\la [e,a],b\ra=\dv[a,b]-a\cdot\dv b+b\cdot\dv a,\quad e\cdot f=\dv d_Ef.
$$
In particular, the expression 
\begin{equation}\label{Zdiv}
\dv[a_+,b_-]-a_+\cdot\dv b_-+b_-\cdot\dv a_+,
\end{equation}
$a_+\in \Gamma(V_+)$ and $b_-\in\Gamma(V_-)$,
is $C^\infty(M)$-linear in both $a_+$ and $b_-$.
\end{prop}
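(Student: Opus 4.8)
The plan is to establish the two displayed identities first and then read off the $C^\infty(M)$-linearity as a corollary. I would begin by fixing an auxiliary nonvanishing density $\mu$, writing $\dv = \dv_\mu + \la e,\cdot\ra$ with $e\in\Gamma(E)$, and computing $\dv[a,b] - a\cdot\dv b + b\cdot\dv a$ by substituting this decomposition. The $\dv_\mu$-part should be manipulated using the Lie-derivative definition $\dv_\mu a = \mu^{-1}L_{\rho(a)}\mu$ together with the fact that $\rho$ is a bracket homomorphism (a standard consequence of the Courant algebroid axioms, obtained by combining the first two axioms) and the Cartan-type identity for Lie derivatives; I expect the $\dv_\mu$-contributions to cancel, leaving only the terms coming from $e$. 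For those, I would expand using the Leibniz rule $\dv(fa) = f\dv a + a\cdot f$ and the invariance axiom $u\cdot\la v,w\ra = \la[u,v],w\ra + \la v,[u,w]\ra$, together with $[a,b]+[b,a] = d_E\la a,b\ra$, to collapse everything to $\la[e,a],b\ra$. The second identity $e\cdot f = \dv d_E f$ should follow by applying the first definition: $\dv d_E f - \dv_\mu d_E f = \la e, d_E f\ra = e\cdot f$, and one checks $\dv_\mu d_E f$ is symmetric enough that the remaining comparison works; alternatively one specializes the general computation.

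The independence of $[e,\cdot]$ from $\mu$ is then immediate: changing $\mu$ to $\mu' = h\mu$ changes $\dv_\mu$ by $\la d_E\log h,\cdot\ra$, hence changes $e$ by $d_E\log h$, and $[d_E\log h,\cdot\,]$ is an inner derivation that acts trivially in the relevant sense — more precisely, $d_Ef$ is always in the kernel of the anchor-adjoint structure and $[d_Ef, a] = -[a,d_Ef] + d_E(a\cdot f)$, which one checks contributes nothing to $[e,\cdot]$ as a derivation once paired appropriately; in any case the formula $\la[e,a],b\ra = \dv[a,b] - a\cdot\dv b + b\cdot\dv a$ manifestly depends only on $\dv$, so $[e,\cdot]$ is determined by $\dv$ alone.

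For the final assertion, I would specialize to $a = a_+\in\Gamma(V_+)$ and $b = b_-\in\Gamma(V_-)$, so that $\la a_+, b_-\ra = 0$. Then \eqref{Zdiv} equals $\la[e,a_+],b_-\ra$ by the first identity. Since $[e,\cdot]$ is a derivation of $\Gamma(E)$ that is $C^\infty(M)$-linear — indeed $[e,fa_+] = f[e,a_+] + (e\cdot f)a_+ = f[e,a_+] + (\dv d_E f)\,a_+$, wait, I must be careful: $[e,\cdot]$ is not $C^\infty(M)$-linear as an operator on $\Gamma(E)$, but $\la[e,fa_+],b_-\ra = f\la[e,a_+],b_-\ra + (e\cdot f)\la a_+,b_-\ra = f\la[e,a_+],b_-\ra$ because $\la a_+,b_-\ra = 0$; and similarly $\la[e,a_+],fb_-\ra = f\la[e,a_+],b_-\ra + \la[e,a_+],b_-\,(\text{correction terms})\ra$ — here I use that for the second slot, $\la[e,a_+],fb_-\ra = \la[e,a_+],f b_-\ra$ and expand via invariance: $\la[e,a_+],fb_-\ra = e\cdot\la a_+,fb_-\ra - \la a_+,[e,fb_-]\ra = -\la a_+,f[e,b_-]\ra - (e\cdot f)\la a_+,b_-\ra = f\la[e,a_+],b_-\ra$, again using orthogonality. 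So both linearities hold precisely because $V_+\perp V_-$.

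The main obstacle I anticipate is the core computation of the first identity: disentangling the $\dv_\mu$ terms requires carefully invoking that $\rho\colon\Gamma(E)\to\Gamma(TM)$ preserves brackets (not an axiom but a derived fact) and then a slightly delicate application of the interplay between $L_{\rho(a)}$, $\iota$, and the density $\mu$ — this is where sign errors and misapplied Cartan calculus are most likely. Everything after that is formal manipulation with the Courant axioms.
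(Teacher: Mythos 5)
Your proposal follows essentially the same route as the paper: decompose $\dv=\dv_\mu+\la e,\cdot\ra$, use that $\dv_\mu[a,b]-a\cdot\dv_\mu b+b\cdot\dv_\mu a=0$, collapse the remaining $e$-terms to $\la[e,a],b\ra$ via the invariance and symmetrization axioms, deduce independence of $\mu$ from the fact that this formula involves only $\dv$ (plus non-degeneracy of $\la\,,\,\ra$), and get the bilinearity of \eqref{Zdiv} from the Leibniz rule together with $\la a_+,b_-\ra=0$; all of this is correct and matches the paper's proof.

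The one step you should firm up is $e\cdot f=\dv d_Ef$. Your chain $\dv d_Ef-\dv_\mu d_Ef=\la e,d_Ef\ra=e\cdot f$ is right, but the phrase ``$\dv_\mu d_Ef$ is symmetric enough that the remaining comparison works'' is not an argument: what you actually need is $\dv_\mu d_Ef=0$, and this holds because $\rho(d_Ef)=0$ in any Courant algebroid (a standard consequence of the axioms, e.g.\ by applying the anchor to $[fu,v]=f[u,v]-(v\cdot f)u+\la u,v\ra d_Ef$ and using that $\rho$ intertwines brackets), so that $\dv_\mu d_Ef=\mu^{-1}L_{\rho(d_Ef)}\mu=0$. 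With that fact named, your argument is complete; your side remark about the change $\mu\mapsto h\mu$ shifting $e$ by $d_E\log h$ is also fine (indeed $[d_Eg,\cdot]=0$ identically), though it is superseded by the cleaner observation you make afterwards.
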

\begin{proof}
Since
$$\dv_\mu[a,b]-a\cdot\dv_\mu b+b\cdot\dv_\mu a =0\quad \forall a,b\in\Gamma(E),$$
 we have
\begin{multline*}
\dv[a,b]-a\cdot\dv b+b\cdot\dv a =
  \la e,[a,b]\ra-a\cdot \la e,b\ra + b \cdot \la e,a\ra\\
=-\la[a,e],b\ra + b \cdot \la e,a\ra
  =\la [e,a],b\ra.
\end{multline*}
As $\la[e,a],b\ra$ is independent of $\mu$ for every $a,b\in\Gamma(E)$, so is $[e,\cdot]$.
The $C^\infty$-bilinearity of $\la[e,a_+],b_-\ra$ is easily checked. As $\rho(d_Ef)=0$, we have $\dv_\mu d_Ef=0$ and thus $e\cdot f=\la e,d_Ef\ra=\dv d_Ef-\dv_\mu d_Ef=\dv d_Ef$.
\end{proof}

We shall say that $\dv$ is \emph{compatible with $V_+$}  if $V_+$ is invariant under the derivation $[e,\cdot]$ (i.e.\ if $[e,a_+]\in\Gamma(V_+)$ $\forall a_+\in\Gamma(V_+)$), i.e.\ if
\begin{align}
\label{comp_div}
\dv[a_+,b_-]-a_+\cdot\dv b_-+b_-\cdot\dv a_+=0.
\end{align}
This is always true if $\dv=\dv_\mu$ for a density $\mu$ (as then $[e,\cdot]=0$).

\begin{example}\label{ex:gendil}
Suppose that $E\to M$ is exact and $V_+\subset E$ a generalized metric, corresponding to a pair $(g,H)$. As noticed above, $V_+$ gives us a splitting $E\cong TM\oplus T^*M$. Let $\mu_g$ be the density on $M$ given by $g$. Any divergence on $E$ is of the form $\dv=\dv_{\mu_g}+\la(X,\alpha),\,\cdot\,\ra$ for some $(X,\alpha)\in\Gamma(TM\oplus T^*M)$, and $\dv$ is compatible with $V_+$ iff 
$$\mathcal L_Xg=0\text{ and }d\alpha=i_XH.$$
 Such pairs $(X,\alpha)$ serve as a replacement of dilaton in modified SUGRA \cite{TW,AFHRT}. An actual dilaton $\phi$ exists if $\dv=\dv_{e^{-2\phi}\mu_g}$. In this case $X=0$ and $\alpha=-2\,d\phi$ is exact. For the existence of a dilaton it is thus enough to know that $X=0$ and $H^1(M,\R)=0$. (The idea of seeing divergencies on exact CAs as a generalization of dilaton was put forward in \cite{MGF}.)
\end{example}

\begin{example}\label{ex:divGH}
Continuing Examples \ref{ex:exGH} and \ref{ex:V+GH}, there is a natural divergence $\dv$ on the exact CA $\g\times\mathsf G/\mathsf H$, given by $\dv u=0$ for every $u\in\g$ (constant section of the CA). This divergence is compatible with the generalized metrics of the form $V_+\times\mathsf G/\mathsf H$ ($V_+\subset\g$) as \eqref{comp_div} is trivially satisfied for constant sections $a_+,b_-$.

The divergence $\dv$ is of the form $\dv_\mu$ iff $\mu$ is a $\g$-invariant density on $\mathsf G/\mathsf H$ (as $\dv u=0$ $\forall u\in\g$) and such a $\mu$ exists iff $\h$ is a unimodular Lie algebra ($\g$ is automatically unimodular due to its invariant pairing $\la,\ra$). (This explains why in Poisson-Lie T-duality, the dilaton exists only in the case of unimodular $\h$'s, and is otherwise replaced by a generalized dilaton of Example \ref{ex:gendil}.)

The derivation $[e,\cdot]$, see Proposition \ref{prop:dere}, satisfies $[e,a]=0$ for every constant section $a\in\g$, i.e.\ it is completely determined by the vector field $X:=\rho(e)$, and the formula $e\cdot f=\dv d_E f$ gives us
$$X=\rho(e^\alpha)\rho(e_\alpha)$$
where $e_\alpha$ is a basis of $\g$ and $e^\alpha$ is the dual basis of $\g$ ($X$ is written as a 2nd order differential operator, but is in fact a $\g$-invariant vector field on $\mathsf G/\mathsf H$).
\end{example}

\section{Generalized Ricci tensor and generalized Ricci flow}

Any generalized (pseudo)metric $V_+\subset E$ comes with a natural infinitesimal deformation of $V_+$ in $E$, defined up to inner derivations of $E$ \cite{SV,MGF}. In this section we give a new definition of this deformation and prove its basic properties.

\begin{defn}
The \emph{generalized Ricci tensor}  $\gric_{V_+,\dv}$, corresponding to a pair $(V_+,\dv)$, is the map $\on{GRic}_{V_+,\dv}:\Gamma(V_+)\times\Gamma(V_-)\to C^\infty(M)$  given by

\begin{equation}
\label{gricci}
\on{GRic}_{V_+,\dv}(a_+,b_-):=\dv[b_-,a_+]_+ - b_-\cdot\dv a_+ -
\on{Tr}_{V_+}[[\,\cdot\,,b_-]_-,a_+]_+
\end{equation}
\end{defn}
See \cite{MG,CSCW,MGF2,JV,SV,MGF} for equivalent definitions using auxiliary connections; the fact that $\gric$ really depends on a pair $(V_+,\dv)$ was observed in \cite{MGF}.
If the CA $E$ is not clear from the context, we shall use the notation $\on{GRic}^E_{V_+,\dv}$.
In the definition we use the fact that $[\,\cdot\,,b_-]_-:\Gamma(V_+)\to\Gamma(V_-)$ and $[\,\cdot\,,a_+]_+:\Gamma(V_-)\to\Gamma(V_+)$ are $C^\infty(M)$-linear, so $\on{Tr}_{V_+}[[\,\cdot\,,b_-]_-,a_+]_+$ is well-defined. A motivation for this definition can be found below, in Equation \eqref{tang}.

\begin{prop}
$\on{GRic}_{V_+,\dv}$ is $C^\infty(M)$-bilinear, i.e.\ it is a section of $V_+^*\otimes V_-^*$.
\end{prop}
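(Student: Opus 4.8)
Write $\on{GRic}$ for $\on{GRic}_{V_+,\dv}$. The plan is to check $C^\infty(M)$-linearity separately in the two arguments, i.e.\ to prove $\on{GRic}(fa_+,b_-)=f\on{GRic}(a_+,b_-)$ and $\on{GRic}(a_+,fb_-)=f\on{GRic}(a_+,b_-)$ for all $f\in C^\infty(M)$; since $\on{GRic}$ is manifestly $\R$-bilinear in $(a_+,b_-)$ (each term is an $\R$-bilinear combination of the bracket, the anchor, $\dv$, the projections, and the trace), this yields $C^\infty(M)$-bilinearity, which is the assertion. The only tools are the Courant algebroid axioms — the Leibniz rule $[u,fv]=f[u,v]+(u\cdot f)v$, the fact that the anchor is a bracket homomorphism $\rho([u,v])=[\rho(u),\rho(v)]$ (itself a consequence of the first two axioms), and $[a_+,b_-]=-[b_-,a_+]$, which follows from $[u,v]+[v,u]=d_E\la u,v\ra$ together with $\la a_+,b_-\ra=0$ (as $V_-=V_+^\perp$) — plus the Leibniz rule $\dv(fa)=f\dv a+a\cdot f$ and the elementary formula $\on{Tr}_W\bigl(w'\mapsto\lambda(w')w\bigr)=\lambda(w)$ for a rank-one endomorphism of a vector bundle $W$. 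One repeatedly uses that the projections $(\,\cdot\,)_\pm$ commute with multiplication by functions, $V_\pm$ being subbundles.

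For linearity in $b_-$: replacing $b_-$ by $fb_-$ one has $[fb_-,a_+]_+=f[b_-,a_+]_+$ and $(fb_-)\cdot\dv a_+=f(b_-\cdot\dv a_+)$, so the first term $\dv[b_-,a_+]_+$ acquires an extra summand $[b_-,a_+]_+\cdot f$ while the second term acquires nothing. In the third term, $[c_+,fb_-]_-=f[c_+,b_-]_-+(c_+\cdot f)b_-$, so $[[\,\cdot\,,fb_-]_-,a_+]_+$ equals $f[[\,\cdot\,,b_-]_-,a_+]_+$ plus the rank-one endomorphism $c_+\mapsto(c_+\cdot f)[b_-,a_+]_+$ of $V_+$, whose trace is $[b_-,a_+]_+\cdot f$; carrying the overall minus sign, this cancels the extra summand of the first term, proving $\on{GRic}(a_+,fb_-)=f\on{GRic}(a_+,b_-)$.

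For linearity in $a_+$ more terms appear. Using $[b_-,fa_+]_+=f[b_-,a_+]_++(b_-\cdot f)a_+$ and $\dv(fa)=f\dv a+a\cdot f$, the first term contributes, beyond $f\dv[b_-,a_+]_+$, the quantity $[b_-,a_+]_+\cdot f+(b_-\cdot f)\dv a_++a_+\cdot(b_-\cdot f)$; the second term $-b_-\cdot\dv(fa_+)$ contributes $-(b_-\cdot f)\dv a_+-f(b_-\cdot\dv a_+)-b_-\cdot(a_+\cdot f)$; and, since $[d_-,fa_+]_+=f[d_-,a_+]_++(d_-\cdot f)a_+$, the third term contributes, beyond $-f\on{Tr}_{V_+}[[\,\cdot\,,b_-]_-,a_+]_+$, minus the trace of $c_+\mapsto([c_+,b_-]_-\cdot f)a_+$, namely $-[a_+,b_-]_-\cdot f$. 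The two $(b_-\cdot f)\dv a_+$ contributions cancel, so $\on{GRic}(fa_+,b_-)-f\on{GRic}(a_+,b_-)$ equals
\[
[b_-,a_+]_+\cdot f+\bigl(a_+\cdot(b_-\cdot f)-b_-\cdot(a_+\cdot f)\bigr)-[a_+,b_-]_-\cdot f.
\]
Now $a_+\cdot(b_-\cdot f)-b_-\cdot(a_+\cdot f)=[\rho(a_+),\rho(b_-)]f=\rho([a_+,b_-])f=[a_+,b_-]_+\cdot f+[a_+,b_-]_-\cdot f$, while $[b_-,a_+]_+=-[a_+,b_-]_+$, so the displayed expression vanishes identically. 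Together with the previous paragraph, $\on{GRic}$ is $C^\infty(M)$-bilinear and hence a section of $V_+^*\otimes V_-^*$.

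The main obstacle is purely organizational: keeping straight the signs produced by the Leibniz and antisymmetry rules, and separating the first-order from the second-order terms in $df$. The single conceptual point is that all the second-order-in-$f$ contributions assemble into $\rho([a_+,b_-])f$, after which the cancellation is forced by $[a_+,b_-]_++[b_-,a_+]_+=0$; no new idea beyond the axioms is needed. (Alternatively one could rewrite $\on{GRic}$ via an auxiliary $V_+$-preserving generalized connection, as in the cited references, making tensoriality manifest, but the direct check above is self-contained.)
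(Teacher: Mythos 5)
Your proof is correct. For $C^\infty(M)$-linearity in $b_-$ you do exactly what the paper does: the Leibniz rule for $\dv$ produces $[b_-,a_+]_+\cdot f$ from the first term, and the trace of the rank-one correction $c_+\mapsto(c_+\cdot f)[b_-,a_+]_+$ cancels it. Where you diverge is the linearity in $a_+$: the paper does not redo a direct computation but invokes the flip identity \eqref{gricflip} relating $\gric_{V_+,\dv}(a_+,b_-)$ to $\gric_{V_-,\dv}(b_-,a_+)$ (for which linearity in $a_+$ is the already-established case with $V_\pm$ exchanged), together with the $C^\infty(M)$-bilinearity of the divergence defect \eqref{Zdiv} from Proposition \ref{prop:dere}. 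You instead expand all three terms of \eqref{gricci} with $fa_+$, and the key step is recognizing that the second-order terms assemble into $\rho([a_+,b_-])f$ via the anchor homomorphism property, after which $[b_-,a_+]_++[a_+,b_-]_+=0$ (from the symmetric axiom and $\la a_+,b_-\ra=0$) kills everything; I checked the bookkeeping and all the extra terms you list are right, including the trace $-[a_+,b_-]_-\cdot f$ from the third term and the cancellation of the two $(b_-\cdot f)\dv a_+$ contributions. Your route is self-contained — it does not lean on Proposition \ref{prop:dere} (whose proof uses the auxiliary derivation $[e,\cdot]$ attached to a reference density) nor on the flip formula — at the price of a longer computation; the paper's argument is shorter and reuses structure it has already built, and the flip identity it exploits is of independent use later in the paper (e.g.\ in the remark on compatibility of $\dv$ with the flow). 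Both are valid proofs of the proposition.
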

\begin{proof}
We have
$$\on{Tr}_{V_+}[[\,\cdot\,,fb_-]_-,a_+]_+=f\on{Tr}_{V_+}[[\,\cdot\,,b_-]_-,a_+]_+ +[b_-,a_+]_+\cdot f.$$

On the other hand, 
$$\dv [fb_-,a_+]_+=f\dv[b_-,a_+]_+ + [b_-,a_+]_+\cdot f$$ and thus GRic is $C^\infty(M)$-linear in $b_-$. The $C^\infty(M)$-linearity in $a_+$ then follows from 
\begin{multline}\label{gricflip}
\gric_{V_-,\dv}(b_-,a_+)=\gric_{V_+,\dv}(a_+,b_-)\\ +\dv[a_+,b_-]-a_+\cdot\dv b_-+b_-\cdot\dv a_+
\end{multline}
 and from the  $C^\infty(M)$-bilinearity of \eqref{Zdiv}.
\end{proof}

Importantly, in the case of an exact CA, we recover the usual Ricci tensor:

\begin{prop}\label{exactRicci}
Suppose $E,V_+$ is an exact CA with a generalized (pseudo)metric corresponding to a pair $(g,H)$. Let $\on{Ric}_{g,H}$ be the Ricci tensor of $\nabla=\nabla_{g,H}$, the $g$-preserving connection with the torsion given by $H$, and let $\dv =\dv_{\mu_g}=\on{Tr} \nabla\circ \rho$ where $\mu_g$ is the density given by $g$. Then
\[\on{GRic}_{V_+,\dv}(a_+,b_-)=\on{Ric}_{g,H}(\rho(a_+),\rho(b_-)).\]
\end{prop}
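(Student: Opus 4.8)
The plan is to reduce the abstract formula \eqref{gricci} to a computation in the standard splitting $E\cong TM\oplus T^*M$ and then recognize the resulting expression as the classical Ricci tensor of the metric connection $\nabla_{g,H}$ with torsion $H$. First I would fix the splitting determined by $V_+$, so that $V_+=\{v+g(v,\cdot)\}$ and $V_-=\{v-g(v,\cdot)\}$, giving identifications $V_\pm\cong TM$ via the anchor $\rho$. Under these identifications I would compute the three pieces of $\on{GRic}_{V_+,\dv}(a_+,b_-)$ separately: the projections $[\,\cdot\,,b_-]_-$ and $[\,\cdot\,,a_+]_+$ of the Courant bracket, the divergence term $\dv[b_-,a_+]_+ - b_-\cdot\dv a_+$ with $\dv=\dv_{\mu_g}$, and the trace term $\on{Tr}_{V_+}[[\,\cdot\,,b_-]_-,a_+]_+$. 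The key input is that the Courant bracket on $TM\oplus T^*M$ twisted by $H$ has an explicit formula (given in the excerpt), and that the connection $\nabla_{g,H}=\nabla^{LC}+\tfrac12 g^{-1}H$ has a well-known relation to these bracket projections — the maps $a\mapsto[a,b_-]_+$ and $a\mapsto[a,a_+]_-$ are, up to signs and the factor structure, exactly the components of $\nabla_{g,H}$ acting on $TM$.

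Concretely, I would invoke (or rederive) the standard dictionary: writing $a_+=X+g(X,\cdot)$, $b_-=Y-g(Y,\cdot)$, one has $[a_+,b_-]_+$ corresponding via $\rho$ to $\nabla^{(-)}_Y X$ or a closely related expression, where $\nabla^{(\pm)}=\nabla^{LC}\pm\tfrac12 g^{-1}H$, and similarly the $V_-$-projection corresponds to the other torsionful connection. The trace term $\on{Tr}_{V_+}[[\,\cdot\,,b_-]_-,a_+]_+$ then unwinds into the contraction of a curvature-type expression: schematically $\on{Tr}_Z\bigl(\nabla^{(+)}_{\nabla^{(-)}_Y Z}X - \dots\bigr)$, which after antisymmetrization and use of the Bianchi-type identities for $\nabla_{g,H}$ collapses to $\on{Ric}_{g,H}(X,Y)$. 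The divergence terms supply precisely the $\on{Tr}(\nabla\cdot)$ corrections needed so that the trace is taken against the correct volume form, i.e.\ they convert a ``naive'' trace into the genuine Ricci contraction (this is the role of $\dv_{\mu_g}=\on{Tr}\nabla\circ\rho$).

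The main obstacle I expect is bookkeeping the signs and the asymmetry between $V_+$ and $V_-$: the bracket is not skew, the two projections involve the two different torsionful connections $\nabla^{(\pm)}$, and one must be careful that $\on{Ric}_{g,H}$ here means the Ricci tensor of $\nabla_{g,H}$ (which is not symmetric in general, so the order of arguments $\rho(a_+),\rho(b_-)$ matters and must be matched to the $(a_+,b_-)$ order in $\on{GRic}$). A clean way to control this is to first establish the ``flip'' formula \eqref{gricflip} is consistent with the corresponding identity $\on{Ric}_{g,-H}(Y,X)-\on{Ric}_{g,H}(X,Y)=(\text{explicit }dH,\nabla H\text{ terms})$, which vanishes appropriately when $dH=0$; this serves as a useful sanity check on the signs. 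I would also rely on the $C^\infty(M)$-bilinearity already proved, which means it suffices to verify the identity at a point using sections $a_+,b_-$ that are suitably normalized there (e.g.\ with $\rho(a_+),\rho(b_-)$ a local frame and controlled first derivatives), reducing the computation to a pointwise tensorial identity and avoiding any global analysis. The remaining work is then a finite, if somewhat tedious, local computation comparing the two explicit formulas.
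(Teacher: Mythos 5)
Your proposal is correct and follows essentially the same route as the paper: the heart of both arguments is the dictionary $\rho([b_-,a_+]_+)=\nabla_{\rho(b_-)}\rho(a_+)$, $\rho([a_+,b_-]_-)=\tilde\nabla_{\rho(a_+)}\rho(b_-)$ (with $\tilde\nabla$ the connection of opposite torsion), after which the three terms of \eqref{gricci} are matched against the classical Ricci tensor of $\nabla_{g,H}$. The only difference is that the paper compresses your final ``tedious local computation'' into the single identity \eqref{tang}, $\on{Ric}_\nabla(X,Y)=\dv_\nabla\nabla_Y X-Y\dv_\nabla X-\on{Tr}(\nabla X\,\tilde\nabla Y)$, valid for an arbitrary connection, so the term-by-term match is immediate and no curvature expansion or Bianchi-type identities are needed.
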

\begin{proof}
The key point is the following observation:
\[\rho([b_-,a_+]_+)=\nabla_{\rho(b_-)}\rho(a_+),\quad \rho([a_+,b_-]_-)=\tilde\nabla_{\rho(a_+)}\rho(b_-),\]
where $\tilde\nabla$ is the connection with the opposite torsion, i.e. $\tilde\nabla_X Y=\nabla_Y X + [X,Y]$. For this fact, see \cite{MG}. The proposition then follows from the identity
\begin{align}
\label{tang}
\on{Ric}_\nabla(X,Y)=\dv_\nabla\nabla_Y X-Y\dv_\nabla X-\on{Tr}(\nabla X\,\tilde\nabla Y),
\end{align}
valid for the Ricci tensor of any  connection $\nabla$ on $TM$, where $\dv_\nabla X:=\on{Tr}\nabla X$.
\end{proof}

Let us now show how $\gric$ transforms when we change the divergence.
\begin{prop}
If $\dv'={\dv}+\la e,\cdot\ra$ then 
$$\on{GRic}_{V_+,\dv'}(a_+,b_-)-\on{GRic}_{V_+,\dv}(a_+,b_-)=-\la[e_+,a_+],b_-\ra$$
\end{prop}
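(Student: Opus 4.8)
The plan is to track the change in each of the three terms of the defining formula \eqref{gricci} under the shift $\dv' = \dv + \la e, \cdot\ra$, keeping only the part that depends explicitly on $\dv$ (the bracket $[\,\cdot\,,b_-]_-$ and $[\,\cdot\,,a_+]_+$ in the trace term involve no divergence, so that term is unchanged). Writing out
\begin{align*}
\on{GRic}_{V_+,\dv'}(a_+,b_-)-\on{GRic}_{V_+,\dv}(a_+,b_-)
&=\la e,[b_-,a_+]_+\ra - b_-\cdot\la e,a_+\ra\\
&=\la e_+,[b_-,a_+]_+\ra - b_-\cdot\la e_+,a_+\ra,
\end{align*}
where in the second line I used that $[b_-,a_+]_+\in\Gamma(V_+)$ and $a_+\in\Gamma(V_+)$, so only the $V_+$-component $e_+$ of $e$ contributes. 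The goal is then to show this equals $-\la[e_+,a_+],b_-\ra$.

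The key step is the invariance identity for the pairing: $b_-\cdot\la e_+,a_+\ra = \la[b_-,e_+],a_+\ra + \la e_+,[b_-,a_+]\ra$. Substituting this in, the term $\la e_+,[b_-,a_+]_+\ra$ cancels against the $V_+$-part of $\la e_+,[b_-,a_+]\ra$ (the $V_-$-part pairs trivially with $e_+\in\Gamma(V_+)$ since $V_+\perp V_-$), leaving $-\la[b_-,e_+],a_+\ra$. It then remains to convert $-\la[b_-,e_+],a_+\ra$ into $-\la[e_+,a_+],b_-\ra$. For this I would use the symmetry axiom $[b_-,e_+]+[e_+,b_-]=d_E\la b_-,e_+\ra$ together with $\la a_+, d_E\la b_-,e_+\ra\ra = a_+\cdot\la b_-,e_+\ra = 0$ (again because $V_+\perp V_-$, the function $\la b_-,e_+\ra$ vanishes), giving $\la[b_-,e_+],a_+\ra = -\la[e_+,b_-],a_+\ra$; and then the invariance axiom once more, $e_+\cdot\la b_-,a_+\ra = \la[e_+,b_-],a_+\ra+\la b_-,[e_+,a_+]\ra$, with $\la b_-,a_+\ra=0$, yields $\la[e_+,b_-],a_+\ra = -\la b_-,[e_+,a_+]\ra$. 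Chaining these gives exactly $-\la[b_-,e_+],a_+\ra = -\la[e_+,a_+],b_-\ra$, as desired.

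I expect the main obstacle to be purely bookkeeping: making sure every pairing-invariance and symmetry-axiom application is applied to the right pair of sections and that the orthogonality $V_+\perp V_-$ is invoked correctly to kill the functions $\la b_-,e_+\ra$ and $\la b_-,a_+\ra$ and to project onto $e_+$. There is no genuine analytic or structural difficulty here; it is a short manipulation using only the CA axioms (Leibniz for the pairing, the symmetry relation $[u,v]+[v,u]=d_E\la u,v\ra$) and the definition of the projections. A mild subtlety worth a sentence in the writeup is the very first reduction $\la e,\cdot\ra\mapsto\la e_+,\cdot\ra$ in both surviving terms, which one should justify by noting that both $[b_-,a_+]_+$ and $a_+$ lie in $\Gamma(V_+)$ so only $e_+$ is seen.
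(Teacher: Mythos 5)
Your proof is correct and follows essentially the same route as the paper: both compute the difference as $\la e,[b_-,a_+]_+\ra - b_-\cdot\la e,a_+\ra$, reduce $e$ to $e_+$, and then use the invariance of the pairing and the symmetry axiom (with $\la e_+,b_-\ra=\la a_+,b_-\ra=0$) to arrive at $-\la[e_+,a_+],b_-\ra$. The paper's version is just a more compressed statement of the same chain of identities, so no further comment is needed.
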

\begin{proof}
We have
$$ 
\la e,[b_-,a_+]_+\ra -b_-\cdot\la e,a_+\ra=-\la[b_-,e_+]_+,a_+ \ra
=\la[e_+,b_-],a_+\ra=-\la[e_+,a_+],b_-\ra.
$$
\end{proof}

Any linear map $V_+\to V_-$, in particular $\gric_{V_+,\dv}$ (understood as a section of $V_+^*\otimes V_-$), can be seen as an infinitesimal deformation of $V_+$ in $E$. The flow of $V_+$ given by $\gric_{V_+,\dv}$ is the \emph{generalized Ricci flow} (we do not attempt to give conditions for short-time existence of this flow). The previous proposition says that, up to infinitesimal automorphisms of $E$, this flow is independent of the choice of $\dv$: the difference of the flows given by $\dv'$ and by $\dv$ is the inner derivation $-[e_+,\,\cdot\,]$ of $E$.

In the case of an exact CA the generalized Ricci flow is, by Proposition \ref{exactRicci}, the 1-loop renormalization group flow of the 2-dimensional $\sigma$-model given by the pair $(g,H)$; for $H=0$ it is the usual Ricci flow of $g$.

\begin{example}\label{ex:ricGH}
Continuing Example \ref{ex:divGH}, we have
\begin{multline*}
\gric^{\g\times\mathsf G/\mathsf H}_{V_+\times\mathsf G/\mathsf H,\dv}(a_+,b_-)=\gric^\g_{V_+,0}(a_+,b_-)\\
=-\on{Tr}_{V_+}[[\,\cdot\,,b_-]_-,a_+]_+ \quad(\forall a_+\in V_+,b_-\in V_-)
\end{multline*}
In particular, the generalized Ricci flow of $V_+\times\mathsf G/\mathsf H$ in the exact CA $\g\times\mathsf G/\mathsf H$ is simply equal (pointwise in $\mathsf G/\mathsf H$) to the generalized Ricci flow of $V_+\subset\g$ (which implies a short-time existence of the flow, as the latter is an ODE). In other words, Poisson-Lie T-duality (in the case of no spectators) is compatible with the 1-loop renormalization group flow.
\end{example}

\begin{rem}
If $\dv=\dv_\mu+\la e,\cdot\ra$ for a density $\mu$ and a section $e\in\Gamma(E)$, we have, by \eqref{gricflip} and Proposition \ref{prop:dere},
$$\gric_{V_-,\dv}(b_-,a_+)=\gric_{V_+,\dv}(a_+,b_-)+\la[e,a_+],b_-\ra$$
In particular, if $\dv$ is compatible with $V_+$ (i.e.\ if $[e,\cdot]$ preserves $V_+\subset E$) then $\gric_{V_-,\dv}(b_-,a_+)=\gric_{V_+,\dv}(a_+,b_-)$.

A natural question is, if $\dv$ is compatible with $V_+$,  whether it stays compatible during the generalized Ricci flow. This is equivalent to $[e,\cdot]$-invariance of $\gric_{V_+,\dv}$, i.e.\ to
$$e\cdot\gric_{V_+,\dv}(a_+,b_-)=\gric_{V_+,\dv}([e,a_+],b_-)+\gric_{V_+,\dv}(a_+,[e,b_-]).$$
As $V_+$ is assumed to be $[e,\cdot]$-invariant, this equation is satisfied provided $\dv$ is $[e,\cdot]$-invariant.
As
$$e\cdot\dv a-\dv[e,a]=a\cdot\bigl(\dv_\mu e+\tfrac{1}{2}\la e,e\ra\bigr),$$
a sufficient condition is that the function $\dv_\mu e+\tfrac{1}{2}\la e,e\ra$ (which depends only on $\dv$ and not on the choice of its splitting $\dv=\dv_\mu+\la e,\cdot\ra$) is constant along the integral leaves of $E$.
\end{rem}

\section{Laplacian, string effective action, and generalized scalar curvature}
Given a generalized (pseudo)metric $V_+\subset E$ we define in this section a natural 2nd-order differential operator (Laplacian) $\Delta_{V_+}$ acting on half-densities. The functional $S_E(V_+,\sigma):=-\frac{1}{2}\int_M \sigma\Delta_{V_+}\sigma$, where $\sigma$ is a half-density, is a generalization of the low energy string effective action; its gradient flow (for a fixed $\sigma$) is the generalized Ricci flow.

\subsection{Computing with a local frame}
Suppose that $E\to M$ is a CA and that $e_\alpha$ is a local basis of $E$ such that $\la e_\alpha,e_\beta\ra$ are constant functions. 
It is easy to see that the expression $c_{\alpha\beta\gamma}:=\la e_\alpha, [e_\beta,e_\gamma]\ra$ is totally antisymmetric. If we change the frame $e_\alpha$  by an infinitesimal orthogonal transformation $\delta e_\alpha=A^\beta_{\,\,\alpha} e_\beta$, for $A_{\alpha\beta}$ skew-symmetric (indices are lowered and raised using $\la,\ra$), we obtain (see \cite{DR}, formulas 3.3 and 4.7)
\begin{equation}\label{c_transform}
\delta c_{\alpha \beta \gamma}=3A^\delta_{\,\,[\alpha}c_{\beta\gamma]\delta}-3e_{[\alpha}\cdot A_{\beta \gamma]}.
\end{equation}

If $V_+\subset E$ is a generalized (pseudo)metric, we shall furthermore suppose that the (local) basis $e_\alpha$ is the union of a basis $e_a$ of $V_+$ and of a basis $e_{\bar a}$ of $V_-$ (in general, $\alpha,\beta,\dots$ will correspond to the basis of $E$ and $a,b,\dots$ and $\bar a,\bar b,\dots$ to the bases of $V_+$ and of $V_-$ respectively).

\subsection{Laplacian}

Given a generalized (pseudo)metric $V_+\subset E$ there is a natural 2nd-order formally self-adjoint operator $\Delta_{V_+}$ acting on half-densities on $M$. To define it we use local bases $e_a$ and $e_{\bar a}$  of $V_+$ and $V_-$ as above.
\begin{defn} The Laplacian given by a generalized (pseudo)metric $V_+\subset E$ is the differential operator acting on half-densities on $M$
$$
\Delta_{V_+}:=4\,\mathcal{L}_{\rho(e^a)}\mathcal{L}_{\rho(e_a)}-\frac{1}{6}c_{abc}\,c^{abc}-\frac{1}{2}c_{ab\bar c}\,c^{ab\bar c}
$$
\end{defn}

\begin{prop}
$\Delta_{V_+}$ is well defined, i.e.\ it does not depend on the choice of the local trivialization of $E$.
\end{prop}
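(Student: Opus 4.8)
The plan is to reduce the statement to invariance under an infinitesimal change of an adapted local frame, and then to vary each of the three summands of $\Delta_{V_+}$ separately, using the transformation law \eqref{c_transform} and the Leibniz rule $\mathcal{L}_{fX}\sigma=f\mathcal{L}_X\sigma+\half(X\cdot f)\sigma$ for Lie derivatives of half-densities. The only data entering the definition is the pair of local frames $e_a$ of $V_+$ and $e_{\bar a}$ of $V_-$ subject to $\la e_\alpha,e_\beta\ra$ being locally constant; the anchor $\rho$, the Lie derivative, and the functions $c_{\alpha\beta\gamma}$ are canonical. Two such adapted frames differ by a (local) $\mathrm{GL}$-valued change preserving the decomposition $E=V_+\oplus V_-$ and carrying one constant Gram matrix to another. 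Factoring it through its value at a chosen point as the composite of a constant change -- under which the defining formula, being a full contraction of frame indices, is manifestly invariant -- and one equal to the identity at that point, it suffices to prove $\delta\Delta_{V_+}=0$ for $\delta e_\alpha=A^\beta_{\,\,\alpha}e_\beta$ with $A_{\alpha\beta}$ skew-symmetric and block-diagonal ($A_{a\bar b}=0$), the functions $A_{ab}$ and $A_{\bar a\bar b}$ being arbitrary; for such $A$ one has $\delta e^a=-A^a_{\,\,b}e^b$ and $\delta\la e_\alpha,e_\beta\ra=0$. When $A$ is the $V_-$-block, each of the three terms of $\Delta_{V_+}$ is readily seen to be unchanged (using that a complete contraction of the skew tensor $A$ against a tensor symmetric in the two relevant slots vanishes), so the essential case is $A$ the $V_+$-block.

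For the first term I would expand $\delta\bigl(4\,\mathcal{L}_{\rho(e^a)}\mathcal{L}_{\rho(e_a)}\sigma\bigr)$ term by term via the half-density Leibniz rule. The part that is second order in derivatives of $\sigma$ cancels at once by the skew-symmetry of $A$. The part that is first order in $\sigma$ is a combination of terms of the form $(\rho(e_\gamma)\cdot A_{ab})\,\mathcal{L}_{\rho(e_\delta)}\sigma$, and a short rearrangement shows it vanishes. The zeroth-order part contains a piece $\bigl(\rho(e^a)\!\cdot\!\rho(e_b)\!\cdot\!A^b_{\,\,a}\bigr)\sigma$ carrying a second derivative of $A$; I would rewrite this using $[\rho(e_\alpha),\rho(e_\beta)]=c^\gamma_{\,\,\alpha\beta}\,\rho(e_\gamma)$ -- which follows from $\la e_\gamma,[e_\alpha,e_\beta]\ra=c_{\gamma\alpha\beta}$, the constancy of the Gram matrix, and the fact that $\rho$ is a morphism of brackets -- together with the vanishing of the symmetric part of $\rho(e^a)\rho(e_b)$ against the skew $A^{ab}$. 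What survives is $4\,\delta\bigl(\mathcal{L}_{\rho(e^a)}\mathcal{L}_{\rho(e_a)}\sigma\bigr)=c^\gamma_{\,\,eb}\,(\rho(e_\gamma)\cdot A^{be})\,\sigma$, the indices on $A$ being raised with the $V_+$-metric.

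For the two quadratic terms I would plug block-diagonal skew $A$ into \eqref{c_transform}. In $\delta(c_{abc}c^{abc})$ the ``$A\,c\,c$'' contribution vanishes because it pairs $A$ with $c^{abc}c_{dbc}$, which is symmetric in $a\leftrightarrow d$, leaving $\delta(c_{abc}c^{abc})=-6\,c^{abc}(\rho(e_a)\cdot A_{bc})$. Likewise each piece of the ``$A\,c\,c$'' part of $\delta(c_{ab\bar c}c^{ab\bar c})$ vanishes (for the same skew-against-symmetric reason, whether the $V_+$- or the $V_-$-block of $A$ is involved), leaving $\delta(c_{ab\bar c}c^{ab\bar c})=-2\,c^{ab\bar c}(\rho(e_{\bar c})\cdot A_{ab})$. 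Assembling, $\delta\Delta_{V_+}$ is the multiplication operator $c^\gamma_{\,\,eb}(\rho(e_\gamma)\cdot A^{be})+c^{abc}(\rho(e_a)\cdot A_{bc})+c^{ab\bar c}(\rho(e_{\bar c})\cdot A_{ab})$. Splitting the first sum according to $\gamma\in V_+$ or $\gamma\in V_-$ and using total antisymmetry of $c_{\alpha\beta\gamma}$, skew-symmetry of $A_{ab}$, and relabelling of dummy indices, the $V_+$-part equals $-c^{abc}(\rho(e_a)\cdot A_{bc})$ and the $V_-$-part equals $-c^{ab\bar c}(\rho(e_{\bar c})\cdot A_{ab})$; hence $\delta\Delta_{V_+}=0$.

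I expect the main obstacle to be the bookkeeping rather than any conceptual point: one must keep careful track of the half-density Leibniz terms, of which index is raised by which block of the metric, and of the order of the several internal cancellations in the first term -- in particular the conversion of the second-derivative-of-$A$ term into a first-derivative one via the structure functions $c$. It is precisely at the final assembly that the coefficients $4$, $\tfrac16$, $\tfrac12$ must conspire, the $V_\pm$-parts of the leftover $c^\gamma_{\,\,eb}(\rho(e_\gamma)\cdot A^{be})$ having to match $\delta(-\tfrac16 c_{abc}c^{abc})$ and $\delta(-\tfrac12 c_{ab\bar c}c^{ab\bar c})$ exactly.
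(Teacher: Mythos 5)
Your proposal is correct and follows essentially the same route as the paper: invariance is checked under an infinitesimal orthogonal change of the adapted frame, using \eqref{c_transform} for the variation of the $c^2$-terms and the half-density Leibniz rule for the second-order term, with the same final cancellation between $\delta\bigl(4\,\mathcal{L}_{\rho(e^a)}\mathcal{L}_{\rho(e_a)}\bigr)$ and the variations of $-\tfrac16 c_{abc}c^{abc}-\tfrac12 c_{ab\bar c}c^{ab\bar c}$. The only differences are that you spell out steps the paper treats as immediate (the reduction of a finite frame change to infinitesimal plus constant ones, and the triviality of the $V_-$-block variation), which is harmless.
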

\begin{proof}
Suppose we make an infinitesimal change of the local basis $e_a$ of $V_+$ given by $\delta e_a=A^b_{\,\,a} e_b$ for $A_{ab}$ skew-symmetric. (The operator $\Delta_{V_+}$ is clearly independent of the choice of the basis $e_{\bar a}$.) Let us examine the transformation properties of the respective terms in the definition of $\Delta_{V_+}$.

A straightforward calculation (using the fact that when acting on half-densities, one has $\mathcal{L}_{fu}=f\mathcal{L}_u+\frac{1}{2}\mathcal{L}_u f$) gives
\[
\delta (4\mathcal{L}_{\rho(e^a)}\mathcal{L}_{\rho(e_a)}\sigma)=2(\mathcal{L}_{\rho(e_a)}\mathcal{L}_{\rho(e_b)}A^{ba})\sigma=([e_a,e_b]\cdot A^{ba})\sigma.
\]

For the rest, let us extend $A$ from $\bigwedge^2 V_+$ to $\bigwedge^2 E$ by setting $A_{\bar{a}b}=A_{a\bar{b}}=A_{\bar{a}\bar{b}}=0$. Using (\ref{c_transform}) we have
\begin{align*}
-\frac{1}{6}\delta (c_{abc}c^{abc})&=-(e_c\cdot A_{ab})c^{abc}=-[e_a,e_b]_+\cdot A^{ab},\\
-\frac{1}{2}\delta (c_{ab\bar c}c^{ab\bar c})&=-(e_{\bar c}\cdot A_{ab})c^{ab\bar c}=-[e_a,e_b]_-\cdot A^{ab},
\end{align*}
which combines with $\delta (\mathcal{L}_{\rho(e^a)}\mathcal{L}_{\rho(e_a)}\sigma)$ to give 0.
\end{proof}

\begin{rem}
When $V_+=E$, the operator $\Delta_{V_+}$ is actually a function, namely 8 times the square of the canonical generating Dirac operator. See Remark \ref{rem:D^2}. 
\end{rem}

Consider a pseudo-Riemannian manifold $(M,g)$ with a closed 3-form $H$. Let $E$, $V_+$ be the corresponding exact CA with generalized pseudometric. 
Let  $R$ be the scalar curvature of $g$ and $\mu_g$  the density on $M$ given by $g$. In this case we can express $\Delta_{V_+}$ in terms of familiar quantities:
\begin{prop}\label{prop:Deltaex}
For any $f\in C^\infty(M)$ we have 
$$\Delta_{V_+}(f\mu_g^{1/2})=\mu_g^{1/2}(2\Delta_g-\tfrac{1}{2}R+\tfrac{1}{4}H^2) f,$$
where $H^2=\frac{1}{3!}H_{abc}H^{abc}$ and $\Delta_g$ is the usual Laplace operator acting on functions on $M$.
\end{prop}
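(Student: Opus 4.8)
The plan is to compute $\Delta_{V_+}$ directly in a convenient local orthonormal frame adapted to the splitting $E\cong TM\oplus T^*M$ and to the metric $g$. Specifically, pick a local $g$-orthonormal frame $\varepsilon_i$ of $TM$ with dual coframe $\varepsilon^i$, and set $e_i = \frac{1}{\sqrt2}(\varepsilon_i + g(\varepsilon_i,\cdot))$ for a basis of $V_+$ and $e_{\bar i} = \frac{1}{\sqrt2}(\varepsilon_i - g(\varepsilon_i,\cdot))$ for a basis of $V_-$; then $\la e_i,e_j\ra=\delta_{ij}$, $\la e_{\bar i},e_{\bar j}\ra=-\delta_{ij}$, $\la e_i,e_{\bar j}\ra=0$, so the frame is of the type required in the definition, and $\rho(e_i)=\rho(e_{\bar i})=\frac{1}{\sqrt2}\varepsilon_i$. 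The three pieces of $\Delta_{V_+}$ are then evaluated: the first-order piece $4\mathcal L_{\rho(e^a)}\mathcal L_{\rho(e_a)}$ becomes (up to the sign from raising with the positive-definite $V_+$ pairing) $2\mathcal L_{\varepsilon_i}\mathcal L_{\varepsilon_i}$ acting on half-densities, and the two remaining pieces are the contractions of the structure functions $c_{\alpha\beta\gamma}=\la e_\alpha,[e_\beta,e_\gamma]\ra$ of the Courant bracket.

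First I would record the Courant bracket structure functions in this frame. Using the standard formula $[(u,\alpha),(v,\beta)]=([u,v],\mathcal L_u\beta-\iota_v d\alpha+H(u,v,\cdot))$, one gets $c_{abc}$, $c_{ab\bar c}$, etc., expressed through the Levi-Civita connection coefficients $\omega_{ijk}=g(\nabla_{\varepsilon_i}\varepsilon_j,\varepsilon_k)$ of $g$ (which enter via the non-constancy of the orthonormal frame, i.e.\ via $[\varepsilon_i,\varepsilon_j]$) and through the components $H_{ijk}$. A clean bookkeeping device: extend $c$ by defining $c^{\pm}_{\alpha\beta\gamma}$ and note that in the ``doubled'' frame the combinations that appear are $c_{abc}c^{abc}$ (fully-$V_+$) and $c_{ab\bar c}c^{ab\bar c}$ (two legs in $V_+$, one in $V_-$). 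A short computation shows $c_{abc}$ collects $\tfrac{1}{\sqrt2}$ times the totally antisymmetric part of $(-\omega)$ plus $\tfrac{1}{\sqrt2}H$ with appropriate coefficients, while $c_{ab\bar c}$ collects the symmetric-in-the-unbarred-pair part of $\omega$; the connection's antisymmetric part vanishes (Levi-Civita is torsion-free), so the only surviving connection contribution is through $c_{ab\bar c}$, and it reproduces precisely the combination $\omega_{ijk}\omega_{ikj}-\omega_{iij}\omega_{kkj}$ type terms that assemble into the scalar curvature via the standard frame formula $R = 2\varepsilon_i\cdot\omega_{jij} + \text{(quadratic in }\omega)$. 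Meanwhile the $H$-contributions in $-\tfrac16 c_{abc}c^{abc}$ and $-\tfrac12 c_{ab\bar c}c^{ab\bar c}$ combine to give $+\tfrac14 H^2$ with $H^2=\tfrac{1}{3!}H_{abc}H^{abc}$.

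Next I would convert the half-density operator to an operator on functions by writing $\sigma = f\mu_g^{1/2}$ and using $\mathcal L_{\varepsilon_i}(f\mu_g^{1/2}) = (\varepsilon_i f)\mu_g^{1/2} + f\,\mathcal L_{\varepsilon_i}\mu_g^{1/2}$, with $\mathcal L_{\varepsilon_i}\mu_g^{1/2} = \tfrac12(\dv_{\mu_g}\varepsilon_i)\mu_g^{1/2} = -\tfrac12\omega_{jij}\mu_g^{1/2}$. Applying $2\mathcal L_{\varepsilon_i}\mathcal L_{\varepsilon_i}$ and collecting terms, the second-derivative part yields $2\Delta_g f$ (after checking that $\sum_i \varepsilon_i\varepsilon_i - (\dv\varepsilon_i)\varepsilon_i$ is the Laplace–Beltrami operator in an orthonormal frame), and the zeroth-order part produced by differentiating $\mu_g^{1/2}$ twice contributes a term $\tfrac12(\varepsilon_i\cdot(\dv\varepsilon_i) - \tfrac12(\dv\varepsilon_i)^2 + \ldots)f$ which I would then show equals $-\tfrac12 R f$ after combining with the purely $\omega$-quadratic part coming from $-\tfrac16 c_{abc}c^{abc}-\tfrac12 c_{ab\bar c}c^{ab\bar c}$ evaluated above. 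The main obstacle is precisely this last bookkeeping step: matching the scattered connection-coefficient quadratics coming from three different sources (the double Lie derivative on half-densities, the $V_+$-cubic structure-function term, and the mixed structure-function term) against the standard orthonormal-frame expression for the scalar curvature, with all signs and combinatorial factors correct. A useful shortcut to reduce the risk of sign errors is to first do the case $H=0$ with $g$ flat (everything vanishes, trivial), then $H=0$ with $g$ arbitrary (isolating the $-\tfrac12 R$ term, which one can cross-check against the well-known fact that the generalized scalar curvature of an exact CA with the Levi-Civita divergence is $R-\tfrac1{12}H^2$, up to the normalization relating $\Delta_{V_+}$ to that curvature, cf.\ Proposition~\ref{exactRicci} and the surrounding discussion), and finally turn on $H$ to pin down the $+\tfrac14 H^2$ coefficient; alternatively one can invoke that $\Delta_{V_+}$ is, by construction, $8$ times the square of the generating Dirac operator when $V_+=E$ and relate the general case to known Dirac-operator computations, but the direct frame computation is the most self-contained route.
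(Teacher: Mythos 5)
Your overall strategy (a direct computation of $\Delta_{V_+}$ in an adapted frame, then conversion from half-densities to functions via $\sigma=f\mu_g^{1/2}$) is viable, but the sketch contains a false key claim and leaves precisely the hard step undone. The claim that ``the connection's antisymmetric part vanishes (Levi-Civita is torsion-free), so the only surviving connection contribution is through $c_{ab\bar c}$'' is wrong: torsion-freeness kills the antisymmetric part of the Christoffel symbols in a \emph{coordinate} frame, but in a $g$-orthonormal frame $\varepsilon_i$ the totally antisymmetric part $\omega_{[ijk]}$ of $\omega_{ijk}=g(\nabla_{\varepsilon_i}\varepsilon_j,\varepsilon_k)$ is generically nonzero (e.g.\ for the left-invariant orthonormal frame on $SU(2)$ with the round metric, $\omega_{ijk}=\tfrac12\epsilon_{ijk}$). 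A direct computation in your frame gives $c_{abc}\propto 6\,\omega_{[abc]}+H_{abc}$, so the fully-$V_+$ structure functions do carry connection terms even when $H=0$, and discarding them changes the outcome by $\omega_{[ijk]}\omega^{[ijk]}$-type terms (and $H\cdot\omega$ cross terms) that are not curvature invariants — the bookkeeping you describe would then fail to close on $-\tfrac12 R$. (Also, $c_{ab\bar c}$ is necessarily \emph{antisymmetric} in the unbarred pair; what it contains is essentially the full $\omega_{\bar c\,ab}$, not a ``symmetric part''.) Beyond this, the actual assembly of the three sources of quadratic connection terms into the orthonormal-frame formula for $R$ is exactly what you flag as ``the main obstacle'' and never carry out, so the proof is incomplete even setting the erroneous claim aside.

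For comparison, the paper sidesteps all of this by computing at a fixed point $p$ in a \emph{normal} orthonormal frame ($\Gamma^a_{\,\,bc}|_p=0$, hence $[E_a,E_b]|_p=0$): at $p$ the brackets $[e_a,e_b]$ lie in $T^*M$, so $c_{ab\gamma}c^{ab\gamma}=0$ and $c_{abc}c^{abc}=\tfrac18 H_{abc}H^{abc}$ involve only $H$, while the entire curvature contribution comes from a single derivative-of-Christoffel term $\Gamma^{b\hphantom{ab,}a}_{\hphantom{b}ab,}=-\tfrac12 R$ produced by the double Lie derivative acting on $\mu_g^{1/2}$; the $f$-dependence is then handled by the identity $\mathcal L_{\rho(e_a)}\mathcal L_{\rho(e^a)}(f\sigma)=f\,\mathcal L_{\rho(e_a)}\mathcal L_{\rho(e^a)}\sigma+\dv_{\sigma^2}(d_+f)\,\sigma$ with $\rho(d_+f)=\tfrac12\on{grad}_g f$. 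If you want to keep a general orthonormal frame, you must retain the $\omega_{[ijk]}$ contribution to $c_{abc}$ and verify the full quadratic matching; adopting the normal-frame trick is the cleaner repair.
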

\begin{proof}
For a fixed point $p\in M$ choose a local normal basis of vector fields $E_a$ such that $g(E_a,E_b)$ are constant functions (normality means $\Gamma^a_{\,\,\,bc}=0$ at $p$, where the Christoffel's symbols $\Gamma^a_{\,\,\,bc}$ are defined by $\nabla_{E_c}E_b=\Gamma^a_{\,\,\,bc}E_a$ and $\nabla$ is the Levi-Civita connection; in particular $[E_a,E_b]=0$ at $p$). Then $e_a=E_a+g(E_a,\cdot)\in\Gamma(E)$ form a basis of $V_+$ and $\la e_a,e_b\ra=2g(E_a,E_b)$. If we denote by $E^a$ the dual basis (of $TM$) to $E_a$, then $e^a=\textstyle{\frac{1}{2}}(E^a+g(E^a,\cdot))$.

Observe that 
$$\mathcal L_{\rho(e_a)}\mu_g^{1/2}=\mathcal L_{E_a}\mu_g^{1/2}=\tfrac{1}{2}\mu_g^{1/2}\on{tr}\nabla E_a=\tfrac{1}{2}\Gamma^b_{\,\,\,ab}\mu_g^{1/2}$$
hence at $p$ (using $\rho(e^a)=\frac{1}{2}E^a$ and  $\Gamma^a_{\,\,\,bc}|_p=0$)
$$4\mathcal L_{\rho(e^a)}\mathcal L_{\rho(e_a)}\mu_g^{1/2}=
2\mathcal L_{E^a}\mathcal L_{E_a}\mu_g^{1/2}=
\Gamma^{b\hphantom{ab,}a}_{\hphantom{b}ab,}\mu_g^{1/2}.$$

A quick glance reveals that at the point $p$ one has $[e_a,e_b]\in T^* M\subset E$ as well as $\langle [e_a,e_b],e_c\rangle=H_{abc}$ and thus $\langle [e^a,e^b],e^c\rangle=\textstyle{\frac{1}{8}}H^{abc}$, which implies
\[c_{ab\gamma}c^{ab\gamma}=\langle [e_a,e_b],[e^a,e^b]\rangle=0,\quad c_{abc}c^{abc}= \langle [e_a,e_b],e_c\rangle\langle [e^a,e^b],e^c\rangle=\tfrac{1}{8}H_{abc}H^{abc}.\] 
and since
$$-\tfrac{1}{6}c_{abc}\,c^{abc}-\tfrac{1}{2}c_{ab\bar c}\,c^{ab\bar c}=
\tfrac{1}{3}c_{abc}\,c^{abc}-\tfrac{1}{2}c_{ab\gamma}\,c^{ab\gamma},
$$
we get
$$\mu_g^{-1/2}\Delta_{V_+}\mu_g^{1/2}=\Gamma^{b\,\,\,\,\,\,\,a}_{\,\,\,ab,}+\tfrac{1}{24}H_{abc}H^{abc}=-\tfrac{1}{2}R+\tfrac{1}{4}H^2.$$

To finish the proof we simply notice that
\begin{multline*}
\mathcal L_{\rho(e_a)}\mathcal L_{\rho(e^a)}(f\sigma)=f\mathcal L_{\rho(e_a)}\mathcal L_{\rho(e^a)}\sigma+2\mathcal L_{(e^a\cdot f)\,\rho(e_a)}\sigma\\
=f\mathcal L_{\rho(e_a)}\mathcal L_{\rho(e^a)}\sigma+\dv_{\sigma^2}(d_+f)\,\sigma
\end{multline*}
where $d_+f=(e^a\cdot f)\,e_a\in\Gamma(V_+)$.
In our case $\rho(d_+f)=\frac{1}{2}\on{grad}_g f$, so
$$\mathcal L_{\rho(e_a)}\mathcal L_{\rho(e^a)}(f\mu_g^{1/2})=f\mathcal L_{\rho(e_a)}
\mathcal L_{\rho(e^a)}\mu_g^{1/2} +\tfrac{1}{2}(\Delta_g f)\mu_g^{1/2}$$
which concludes the proof.
\end{proof}

\subsection{String effective action}
If $\sigma$ denotes a half-density on $M$ and $V_+$ a generalized metric in a CA $E\to M$, let us define the action functional
$$S_E(V_+,\sigma)=-\frac{1}{2}\int_M \sigma \Delta_{V_+}\sigma.$$

\begin{example}\label{ex:SEF}
If $M$ is compact and $E$ is exact, so that $V_+\subset E$ is equivalent to a pair $(g,H)$, and if $\sigma=e^{-\phi}\mu_g^{1/2}$ for a suitable function $\phi\in C^\infty(M)$ (i.e.\ if $\sigma$ is everywhere positive), then (see Proposition \ref{prop:Deltaex})
$$S_E(V_+,\sigma)=\int_M \bigl(\tfrac{1}{4}R-\tfrac{1}{8}H^2 + \Vert d\phi\Vert_g^2\bigr)\,e^{-2\phi}\mu_g$$
is the low energy string effective action of the metric $g$, closed 3-form $H$, and dilaton $\phi$. 
\end{example}
 
We now examine (for a general CA) the Euler-Lagrange equations coming from the functional $S_E(V_+,\sigma)$. If $\sigma$ is a nowhere-vanishing half-density, we shall use the notation
$$\gric_{V_+,\sigma}:=\gric_{V_+,\dv_{\sigma^2}}.$$
\begin{thm}\label{thm:EOM1}
Under an infinitesimal change of $V_+$ given by $\varphi\in \Gamma(V_+\otimes V_-)$, for $\sigma$ fixed and nowhere vanishing, 
$$\delta_\varphi S_E(V_+,\sigma)=\int_M\on{GRic}_{V_+,\sigma}(\varphi)\,\sigma^2.$$
\end{thm}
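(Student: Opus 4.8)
The plan is to compute $\delta_\varphi S_E(V_+,\sigma) = -\frac12\int_M \sigma\,(\delta_\varphi\Delta_{V_+})\sigma$ directly, using the local-frame formula for $\Delta_{V_+}$ together with the adapted frame $e_a$ of $V_+$, $e_{\bar a}$ of $V_-$. An infinitesimal change of $V_+$ given by $\varphi\in\Gamma(V_+\otimes V_-)$ can be represented on the frame level by $\delta e_a = \varphi_a^{\ \bar b} e_{\bar b}$ and $\delta e_{\bar a} = \varphi_{\bar a}^{\ b} e_b$ (with $\varphi_{a\bar b}$ and $\varphi_{\bar b a}$ related by the symmetry of $\langle\cdot,\cdot\rangle$, up to sign). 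Since $\Delta_{V_+}$ is already known to be independent of the \emph{orthogonal} part of the frame change, I only need to track the genuinely off-diagonal part $\varphi$. First I would vary each of the three terms $4\mathcal L_{\rho(e^a)}\mathcal L_{\rho(e_a)}$, $-\tfrac16 c_{abc}c^{abc}$, $-\tfrac12 c_{ab\bar c}c^{ab\bar c}$; for the structure-constant terms I can reuse the transformation formula \eqref{c_transform}, now applied with the off-diagonal generator, being careful that indices move between the $V_+$ and $V_-$ ranges.

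Next I would assemble the variation and identify it with $\gric_{V_+,\sigma}(\varphi)\,\sigma^2$ pointwise, integrating by parts where needed to move derivatives off $\sigma$ onto the coefficients (the $\int_M$ and the formal self-adjointness of $\Delta_{V_+}$ make this legitimate, and boundary terms can be ignored as usual, or $M$ taken compact as in Example \ref{ex:SEF}). The target expression is
$$\gric_{V_+,\sigma}(a_+,b_-)=\dv_{\sigma^2}[b_-,a_+]_+ - b_-\cdot\dv_{\sigma^2} a_+ - \on{Tr}_{V_+}[[\,\cdot\,,b_-]_-,a_+]_+,$$
contracted against $\varphi$. The trace term should emerge from varying $c_{abc}c^{abc}$ and $c_{ab\bar c}c^{ab\bar c}$ — these contract two structure constants, one of which becomes a mixed $V_+$–$V_-$ bracket under the variation, producing exactly a $[[\cdot,b_-]_-,a_+]_+$-type trace. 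The divergence terms $\dv_{\sigma^2}[b_-,a_+]_+$ and $b_-\cdot\dv_{\sigma^2}a_+$ should come partly from varying the first-order term $\mathcal L_{\rho(e^a)}\mathcal L_{\rho(e_a)}$ (which is where $\sigma^2$, i.e.\ the divergence $\dv_{\sigma^2}$, enters, via $\mathcal L_{fu}\sigma = f\mathcal L_u\sigma + \tfrac12(\mathcal L_u f)\sigma$) and partly from the $e_{\bar c}\cdot A_{ab}$-type inhomogeneous terms in \eqref{c_transform}.

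The main obstacle I anticipate is bookkeeping: keeping the split index ranges straight while varying, correctly distributing the inhomogeneous derivative terms in \eqref{c_transform} between the two contractions, and doing the integration by parts so that the answer lands precisely as $\dv_{\sigma^2}$ of a bracket rather than some frame-dependent cousin. A useful sanity check at the end is the exact case: by Proposition \ref{prop:Deltaex} and Example \ref{ex:SEF} the functional becomes the string effective action, whose variation in $g$ (at fixed dilaton $\sigma$) is classically known to give $\on{Ric}_{g,H}$ up to the standard dilaton corrections, which matches $\gric_{V_+,\sigma}$ by Proposition \ref{exactRicci} and the transformation rule for $\gric$ under a change of divergence. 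This both guides the computation and verifies the normalization constants ($4$, $\tfrac16$, $\tfrac12$) are consistent with the claimed clean identity.
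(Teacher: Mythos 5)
Your plan follows essentially the same route as the paper's proof: vary the frame by the off-diagonal generator, apply \eqref{c_transform} to the structure-constant terms (which assemble into the trace term of \eqref{gricci}), and integrate by parts on the $\mathcal L_{\rho(e^a)}\mathcal L_{\rho(e_a)}$ term to produce the $b_-\cdot\dv_{\sigma^2}a_+$ contribution. The only bookkeeping note: in the paper the inhomogeneous $e\cdot\varphi$ terms from \eqref{c_transform} are absorbed into the trace term (evaluated on the non-constant section $\varphi^{\bar d a}e_{\bar d}$) rather than into the divergence terms, while the first term of \eqref{gricci} contributes nothing since $\int_M(\dv_{\sigma^2}s)\,\sigma^2=0$.
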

\begin{proof}
We can suppose that the support of $\varphi$ is contained in a region where we chose
bases $e_a$, $e_{\bar a}$ of $V_+$ and $V_-$. If $\varphi=\varphi_{\bar ab}\,e^b\otimes e^{\bar a}$, let us extend $\varphi_{\bar ab}$ to a skew-symmetric matrix  via $\varphi_{ab}=\varphi_{\bar a \bar b}=0$, $\varphi_{a\bar b}=-\varphi_{\bar b a}$. Then $\delta_\varphi e_a=\varphi^{\bar b}_{\,\,a}e_{\bar b}$ and $\delta_\varphi e_{\bar a}=\varphi^{b}_{\,\,\bar a}e_{ b}$ give us bases of the deformed $V_+$ and $V_-$ and using (\ref{c_transform}) we get

\begin{align*}
\frac{1}{12}\delta_\varphi(c_{abc}c^{abc})&=\frac{1}{2}\varphi^{\bar d}_{\,\,a}c_{bc\bar d} c^{abc},\\
\frac{1}{4}\delta_\varphi(c_{ab\bar c}c^{ab \bar c})&=-\frac{1}{2}\varphi^{\bar d}_{\,\,a}c_{bc\bar d} c^{abc}+\varphi^{\bar d}_{\,\,a}c_{b\bar c\bar d} c^{ab\bar c}-(e_{a}\cdot \varphi_{b\bar c})c^{ab\bar c}.\\
\end{align*}
Their sum is
\begin{align*}
\varphi^{\bar d}_{\,\,a}c_{b\bar c\bar d} c^{ab\bar c}-(e_{a}\cdot \varphi_{b\bar c})c^{ab\bar c}&=-\on{Tr}_{V_+}\bigl([[\,\cdot\,,e_{\bar d}]_-,e_a]_+\bigr)\,\varphi^{\bar d a}-[e_{\bar d},e_a]_+\cdot \varphi^{\bar d a}\\
&=-\on{Tr}_{V_+}[[\,\cdot\,,\varphi^{\bar d a}e_{\bar d}]_-,e_a]_+,
\end{align*}
corresponding to the last term of (\ref{gricci}).

Integration by parts gives us
\begin{align*}
\delta_\varphi(-2\int_M \sigma\mathcal{L}_{\rho(e_a)}\mathcal{L}_{\rho(e^a)}\sigma)&=2\delta_\varphi\!\!\int_M (\mathcal{L}_{\rho(e_a)}\sigma) (\mathcal{L}_{\rho(e^a)}\sigma)=4\int_M (\mathcal{L}_{\rho(\varphi^{\bar b}_{\,\,a} e_{\bar b})}\sigma) (\mathcal{L}_{\rho(e^a)}\sigma)\\
&=\int_M \sigma^2 \dv_{\sigma^2} (\varphi^{\bar b}_{\,\,a} e_{\bar b}) \dv_{\sigma^2} e_a=-\int_M \sigma^2\; \varphi^{\bar b}_{\,\,a} e_{\bar b}\cdot \dv_\sigma e_a,
\end{align*}
corresponding to the second term in (\ref{gricci}).

Finally, note that $\int_M(\dv_{\sigma^2} s)\,\sigma^2=\int_M\mathcal L_{\rho(s)}\sigma^2=0$ for any (compactly supported) $s\in\Gamma(E)$, so the first term of \eqref{gricci}  does not contribute to the integral.
\end{proof}

\begin{rem}
If $M$ is compact and $\sigma$ nowhere-vanishing then there is a natural Riemannian metric on the (infinite-dimensional) space $\mathcal G_E$ of generalized metrics in $E$, given by (for $\varphi_1,\varphi_2\in\Gamma(V_+\otimes V_-)$ two tangent vectors at a generalized metric $V_+\subset E$)
$$g_\sigma(\varphi_1,\varphi_2)=-\int_M\la\varphi_1,\varphi_2\ra\,\sigma^2$$
Theorem \ref{thm:EOM1} now says that $\gric_{V_+,\sigma}$, seen as a vector field on $\mathcal G_E$, satisfies
$$\gric_\sigma=-\on{grad}_{g_\sigma} S_E$$
i.e.\ that the generalized Ricci flow is a gradient flow.
\end{rem}

It is much easier to see how the action transforms under a variation of the half-density, $\delta \sigma=\varsigma$:
\[\delta_\varsigma S_E=-\int_M \varsigma \Delta_{V_+} \sigma.\]
The Euler-Lagrange equations given by the action functional $S_E$ are thus (in the case of nowhere-vanishing $\sigma$)
\begin{subequations}\label{ELsugra}
\begin{align}
\gric_{V_+,\sigma}&=0\\
\Delta_{V_+}\sigma&=0.
\end{align}
\end{subequations}
In the case of an exact CA (see  Example \ref{ex:SEF}) these are the string background equations.

\subsection{Generalized scalar curvature and generalized string background equations}

Given a generalized metric $V_+\subset E$ and a non-vanishing half-density, we can define the ``generalized scalar curvature'' by 
$$\mathcal R_{V_+,\sigma}:=\sigma^{-1}\Delta_{V_+}\sigma\in C^\infty(M).$$
It is useful to extend this definition to cases when in place of $\sigma$ we have just a divergence $\dv$ on $E$.

\begin{defn}
Let $V_+\subset E$ be a generalized (pseudo)metric in an arbitrary CA $E$, and let $\dv$ be a divergence on $E$. Then the \emph{generalized scalar curvature} $\mathcal R_{V_+,\dv}$ is defined by
$$\mathcal R_{V_+,\dv}=\sigma^{-1}\Delta_{V_+}\sigma+\la e_+,e_+\ra + 2\dv_{\sigma^2}e_+$$
where $\sigma$ is an arbitrary non-vanishing half-density on $M$ and $e\in\Gamma(E)$ is given by 
$$\dv-\dv_{\sigma^2}=\la e,\cdot\ra.$$
\end{defn}
 An easy calculation gives 
\begin{equation}\label{genR}
\mathcal R_{V_+,\dv}=(\dv e^a)(\dv e_a)+2e^a\cdot(\dv e_a)-\frac{1}{6}c_{abc}\,c^{abc}-\frac{1}{2}c_{ab\bar c}\,c^{ab\bar c}
\end{equation}
which also implies that $\mathcal R_{V_+,\dv}$ doesn't depend on the choice of $\sigma$.
$\mathcal R_{V_+,\dv}$ was previously introduced (using different means) in \cite{JV} and $\mathcal R_{V_+,\sigma}$ (in the case of exact CAs) in \cite{CSCW}.

\begin{defn}
The \emph{generalized string background equations} for $V_+$ and $\dv$ are
\begin{align*}
\gric_{V_+,\dv}&=0\\
\mathcal R_{V_+,\dv}&=0
\end{align*}
\end{defn}

In special cases these equations appear in string theory:

\begin{itemize}
\item If $E$ is exact and $\dv$ is given by a half-density $\sigma$ then they are the string background equations for the corresponding triple $(g,H,\phi)$, see Example \ref{ex:SEF}

\item As discovered in \cite{MGF2}, if $E$ is transitive,  $\rho|_{V_+}$ is bijective, and $\dv$ is given by a half-density $\sigma$, then they are the string background equations for the type I or heterotic superstring, i.e.\ the type I SUGRA equations.

\item If $\dv$ is not given by a half-density, but is compatible with $V_+$, then they are the modified SUGRA equations of \cite{TW,AFHRT}.
\end{itemize}

\begin{example}\label{ex:sugraGH}
In the setup of Example \ref{ex:divGH} we have
\begin{align*}
\gric^{\g\times\mathsf G/\mathsf H}_{V_+\times\mathsf G/\mathsf H,\dv}&=\gric^{\g}_{V_+,0}\qquad\text{(Example \ref{ex:ricGH})}\\
\mathcal R^{\g\times\mathsf G/\mathsf H}_{V_+\times\mathsf G/\mathsf H,\dv}&=\mathcal R^\g_{V_+,0}=-\frac{1}{6}c_{abc}\,c^{abc}-\frac{1}{2}c_{ab\bar c}\,c^{ab\bar c}
\end{align*}
As a result, the generalized metric (or the corresponding pair $(g,H)$) $V_+\times\mathsf G/\mathsf H\subset \g\times\mathsf G/\mathsf H$ and the divergence $\dv$ (or the corresponding pair $(X,\alpha)$ - see Example \ref{ex:gendil}) form a solution of the generalized string background equations iff the algebraic equations
\begin{align*}
\gric^\g_{V_+,0}&=0\\
\mathcal R^\g_{V_+,0}&=0
\end{align*}
are satisfied. These equations do not depend on $\h$ - this fact is the Poisson-Lie T-duality for generalized string background equations. Dilaton exists if $\dv=\dv_\mu$ for a (necessarily $\g$-invariant) density on $\mathsf G/\mathsf H$, i.e.\ if $\h$ is unimodular. In that case, as $\sigma=\mu^{1/2}=e^{-\phi}\mu_g^{1/2}$, we get the dilaton field
$$\phi=\frac{1}{2}\log\frac{\mu_g}{\mu}.$$

\end{example}

\section{Poisson-Lie T-duality}

Poisson-Lie T-duality \cite{KS} is a generalization of the usual T-duality from torus bundles to more general manifolds with possibly no isometries. In this section we shall recall its formulation in terms of CAs \cite{let, PLC,SCS,SV} and prove its compatibility with the renormalization group flow and with the (generalized) string background equations.

 Some of these results are already in the literature, either without using CAs \cite{VKS,SS,SST} (for the Ricci flow) or with using CAs \cite{JV2} (for the string background equations), but they are restricted to the case of no spectators and require extensive calculations.  Our approach gives simple and transparent proofs in full generality (for the problem of the Ricci flow it follows our previous proof \cite{SV}, but gives a stronger result) and the problem of determining the dilaton becomes basically trivial. We also discuss the case of ``dressing cosets'' \cite{KS3} (equivariant PL T-duality) which is necessary for  most of the interesting examples (see Section \ref{sec:ex}).

\subsection{PL T-duality without spectators}

Let us first describe PL T-duality in the simple case of ``no spectators''. It was discussed in the previous text in several Examples, but let us summarize it here and add some more details. 

The main idea is, see Example \ref{ex:exGH}, that given a Lie algebra $\g$ with an invariant $\la,\ra$ (i.e.\ given a CA over a point), we have an \emph{exact} CA  $\g\times \mathsf G/\mathsf H$ whenever $\h^\perp=\h$. If $V_+\subset\g$ is a generalized metric then we  use the generalized metric $V_+\times \mathsf G/\mathsf H$ in the exact CA $\g\times \mathsf G/\mathsf H$ (Example \ref{ex:V+GH}) and, finally, we define a divergence by setting $\dv u=0$ for every \emph{constant} section $u\in\g$ (Example \ref{ex:divGH}).

Poisson-Lie T-duality is then the statement that various physically relevant properties of this generalized metric and divergence can be obtained directly from $V_+\subset\g$, i.e.\ that these properties are independent of $\h$.

\begin{rem}
Both ``Poisson-Lie'' and ``duality'' in ``PL T-duality'' come from the case when there are two Lagrangian Lie subalgebras $\h,\h^*\subset\g$ such that $\h\cap\h^*=0$. ``Duality'' then refers to the two manifolds $\mathsf G/\mathsf H$ and $\mathsf G/\mathsf H^*$, and ``Poisson-Lie'' to the fact that $\mathsf H$ and $\mathsf H^*$ are Poisson-Lie groups.
\end{rem}

\subsubsection{Sigma model}
The generalized metric $V_+\times \mathsf G/\mathsf H$ in the exact CA $\g\times \mathsf G/\mathsf H$ is equivalent to a pair $(g,H)$ on $\mathsf G/\mathsf H$ and thus gives rise to a 2-dim $\sigma$-model with the target $\mathsf G/\mathsf H$. This $\sigma$-model, as a Hamiltonian system, can be (up to finitely many degrees of freedom) formulated in terms of $V_+\subset\g$:

The phase space is the moduli space of flat $\g$-connection $A\in\Omega^1(Z)\otimes\g$ on an annulus $Z$, with the boundary condition $A|_{\text{inner }S^1}\in\Omega^1(S^1)\otimes\h$, modulo gauge transformations vanishing on the outer boundary circle and taking values in $\mathsf H$ on the inner boundary circle. The Hamiltonian is
$$\mathcal H(A)=\frac{1}{2}\int_{\text{outer }S^1}\la A_\sigma,\mathbf V A_\sigma\ra\,d\sigma$$
where $\mathbf V:\g\to\g$ is the reflection w.r.t.\ $V_+$, $\sigma$ is the coordinate (angle) on the outer $S^1$, and $A|_{\text{outer }S^1}=A_\sigma d\sigma$.

This description does depend on $\h$, but a suitable reduction (removing only finitely many degrees of freedom), when we constrain the holonomy to be 1, i.e. when we replace the annulus with a disk, uses only $\g$ and $V_+$.
$$
\text{Full phase space}:\quad
\begin{tikzpicture}[scale=0.5, baseline=-0.5ex]
\filldraw[draw=black,fill=black!10!white](0,0)circle(2cm);
\filldraw[very thick,draw=red,fill=white](0,0)circle(1cm);
\node[red] at (0.5,0) {$\h$};
\end{tikzpicture}
\qquad
\text{Reduced phase space}:\quad 
\begin{tikzpicture}[scale=0.5, baseline=-0.5ex]
\filldraw[draw=black,fill=black!10!white](0,0)circle(2cm);
\end{tikzpicture}
$$

For details, see \cite{KS2} (the original explanation) and \cite{SCS} (the picture presented here and its space-time version).
\subsubsection{Ricci flow}
As observed in Example \ref{ex:ricGH}, we have (for constant sections of $\g\times \mathsf G/\mathsf H$)
$$\gric^{\g\times\mathsf G/\mathsf H}_{V_+\times\mathsf G/\mathsf H,\dv}=\gric^\g_{V_+,0}$$
In particular, the generalized Ricci flow of $V_+\times\mathsf G/\mathsf H$ in the exact CA $\g\times\mathsf G/\mathsf H$ (with the divergence $\dv$), i.e.\ the 1-loop renormalization group flow of $(g,H)$, is (pointwise $\mathsf G/\mathsf H$) the same as the generalized Ricci flow of $V_+$ in $\g$ (with the zero divergence).

\subsubsection{(Generalized) string background equations}\label{sec:gsbe}
As observed in Example \ref{ex:sugraGH}, the generalized metric $V_+\times\mathsf G/\mathsf H$ and the divergence $\dv$, or equivalently the quadruple $(g,H,X,\alpha)$, satisfies the generalized string background equations 
\begin{subequations}\label{PLsugI}
\begin{equation}
\gric^{\g\times\mathsf G/\mathsf H}_{V_+\times\mathsf G/\mathsf H,\dv}=0,\quad \mathcal R^{\g\times\mathsf G/\mathsf H}_{V_+\times\mathsf G/\mathsf H,\dv}=0
\end{equation}
 iff 
\begin{equation}
\gric^\g_{V_+,0}=0,\quad \mathcal R^\g_{V_+,0}=0.
\end{equation}
\end{subequations}
 Moreover, if $\h$ is unimodular so that there is a $\g$-invariant density $\mu$ on $\mathsf G/\mathsf H$, the generalized string background equations become the ordinary string background equations, with the dilaton $\phi=\frac{1}{2}\log\frac{\mu_g}{\mu}$. (If $\mathcal R^\g_{V_+,0}\neq0$, we can still interpret the outcome as the string background equations for a non-critical string).

As a minor generalization, if $\h$ is coisotropic instead of Lagrangian (i.e.\ if $\h^\perp\subset\h$ instead of $\h^\perp=\h$) then the CA $\g\times\mathsf G/\mathsf H$ is transitive instead of exact. If $\dim V_+=\dim\g/\h$ then the generalized metric 
$V_+\times \mathsf G/\mathsf H\subset \g\times \mathsf G/\mathsf H$ (together with $\dv$) is equivalent to the (bosonic) field content of the type I SUGRA together with the gauge fields, and  \eqref{PLsugI}
is again equivalent to the SUGRA equations \cite{MGF2}. In other words, PL T-duality works also in this case.

\subsection{CA pullbacks and PL T-duality with spectators}
Poisson-Lie T-duality is most easily formulated in terms of pullbacks of CAs in the following sense:
\begin{defn}
If $E\to M$ is CA and $\tau:M'\to M$ is a smooth map, we shall say that $E':=\tau^*E$ is a \emph{CA-pullback} of $E$  if on $E'$ we have a compatible CA structure: 
\begin{gather*}\label{pullback-CA}
\la \tau^*u,\tau^*v\ra'=\tau^*\la u,v\ra,\quad [\tau^*u,\tau^*v]'=\tau^*[u,v]\\
\tau_*\bigl(\rho'(\tau^*u)\bigr)=\rho(u)\quad\forall u,v\in\Gamma(E).
\end{gather*}
\end{defn}

\begin{rem}
While $\la,\ra'$ and $[,]'$ are determined by $\tau$, $\rho'$ is not.
CA-pullbacks were characterized by Li-Bland and Meinrenken \cite{LBM} as follows:  a compatible CA structure on $\tau^*E$ is uniquely specified by its anchor map $\rho':\tau^*E\to TM'$, and it exists iff $\rho'$ satisfies
\begin{itemize}
\item $\tau_*\bigl(\rho'(\tau^*u)\bigr)=\rho(u)\quad\forall u\in\Gamma(E)$
\item $[\rho'(\tau^*u),\rho'(\tau^*v)]=\rho'(\tau^*[u,v])\quad\forall u,v\in\Gamma(E)$ 
\item for any $p\in M'$ the kernel of $\rho'$ at $p$ is a $\la,\ra$-coisotropic subspace of $E_{f(p)}$.
\end{itemize}
\end{rem}

\begin{example}
Let us consider the case $M=\text{point}$:
 $E=\g$ is a Lie algebra with an invariant pairing $\la,\ra$. 
Let $M'$ be a manifold with an action $\rho'$ of $\g$ such that the stabilizers of points are coisotropic. Then \cite{LBM} $E':=\g\times M'$ is a CA: the pairing and the bracket of constant sections is the pairing and the bracket on $\g$, and the anchor map is the action $\rho'$.

In particular, if $M'=\mathsf G/\mathsf H$ with $\h^\perp=\h$, then $E'$ is an exact CA. These exact CA-pullbacks of $\g$ correspond to PL T-duality without spectators, as we discussed in detail above.
\end{example}

The main source of examples giving exact $E'$'s is described below in Example \ref{ex:spect}.

If now $\tau^*E$ is a CA-pullback and $V_+\subset E$ a generalized metric then $\tau^*V_+\subset \tau^*E$ is a generalized metric. Moreover, if $\dv$ is a divergence on $E$ then there is a unique divergence $\tau^*\dv$ on $\tau^*E$ characterized by
$$(\tau^*\dv)(\tau^*u)=\tau^*(\dv u)\qquad\forall u\in\Gamma(E).$$
If $\dv$ is compatible with $V_+$ then $\tau^*\dv$ is compatible with $\tau^*V_+$.

Poisson-Lie T-duality is then the statement that various properties of $\tau^*V_+\subset \tau^*E$ and of $\tau^*\dv$ are determined by $V_+\subset E$ and by $\dv$ (the spectator-less case corresponds to $E=\g$ and $\dv=0$).

\subsubsection{Sigma models}\label{sec:sigmaPL}

Let us suppose that $\tau:M'\to M$ is a surjective submersion and that $E'=\tau^*E\to M$ is an exact CA (which forces $E\to M$ to be transitive), so that the generalized metric $\tau^*V_+\subset \tau^*E$ gives rise to a pair $(g,H)$ on $M'$, which can be used to define a 2-dim $\sigma$-model with the target space $M'$. 

The phase space of the $\sigma$ model is $T^*(LM')$ with the symplectic form twisted by $H$. As in the no-spectator case, Poisson-Lie T-duality says that there is an infinite-dimensional symplectic manifold defined in terms of the CA $E\to M$, and a Hamiltonian defined in terms of $V_+$, which is at the same time a finite-codimension coisotropic reduction of $T^*(LM')$. Up to finitely many degrees of freedom the $\sigma$-model can thus be formulated in terms of $V_+\subset E$, without any reference to $M'$ and $E'$.

We shall not give a proof of this statement in this paper, as we're more concerned here with the Ricci flow and the string background equations. Instead, we refer the reader to \cite{KS,KS2,SCS}. 

\begin{rem}
The main idea of \cite{SCS} is to use $V_+\subset E$ to define a 2-dim $\sigma$-model as a 3-dim topological field theory (Courant $\sigma$-model) with a non-topological boundary condition given by $V_+$. For exact CAs it gives the standard 2-dim $\sigma$-models \eqref{sigmaaction}. Conjecturally the generalized Ricci flow is the renormalization group flow of this  exotic $\sigma$-model and all other ``generalized things'' that we consider, such as the generalized string background equations or the generalized SUGRA equations (see \S\ref{sec:SUGRA}), can be interpreted in these terms.
\end{rem}

\subsubsection{Ricci flow}
Poisson-Lie T-duality for (generalized) Ricci flow is the following simple statement.
\begin{thm}\label{thm:PLric}
If $E\to M$ is a CA, $V_+\subset E$ a generalized metric, $\dv$ a divergence on $E$,  and $\tau^*E$ a CA-pullback then $\gric^{\tau^*E}_{\tau^*V_+,\tau^*\dv}=\tau^*\gric^E_{V_+,\dv}$, i.e.
$$\gric^{\tau^*E}_{\tau^*V_+,\tau^*\dv}(\tau^*a_+,\tau^*b_-)=\tau^*(\gric^E_{V_+,\dv}(a_+,b_-))\quad\forall a_+\in\Gamma(V_+),b_-\in\Gamma(V_-)$$
\end{thm}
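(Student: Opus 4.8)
The plan is to verify the identity term by term in the definition \eqref{gricci}, pushing $\tau^*$ through each of the three summands. The tools are the defining compatibilities of a CA-pullback, namely $\la\tau^*u,\tau^*v\ra'=\tau^*\la u,v\ra$, $[\tau^*u,\tau^*v]'=\tau^*[u,v]$, and $\tau_*\bigl(\rho'(\tau^*u)\bigr)=\rho(u)$, together with the characterizing property $(\tau^*\dv)(\tau^*u)=\tau^*(\dv u)$ of the pulled-back divergence. Two preliminary observations make everything run smoothly. First, since $\tau^*V_+$ and $\tau^*V_-$ are exactly the $\pm$-eigenbundles of the pulled-back pairing on $\tau^*E$, the orthogonal projections on $\tau^*E$ are the pullbacks of those on $E$; hence $[\tau^*a_+,\tau^*b_-]'_+=\tau^*\bigl([a_+,b_-]_+\bigr)$, and likewise for the other brackets and projections of pullback sections. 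Second, for $f\in C^\infty(M)$ one has $(\tau^*b_-)\cdot\tau^*f=\rho'(\tau^*b_-)(\tau^*f)=\tau^*\bigl(\rho(b_-)f\bigr)=\tau^*(b_-\cdot f)$, because $\tau_*\bigl(\rho'(\tau^*b_-)\bigr)=\rho(b_-)$. We may also use, as already stated above, that $\tau^*V_+\subset\tau^*E$ is again a generalized metric and that $\tau^*\dv$ is a well-defined divergence.

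Granting these, the first two terms of \eqref{gricci} transform as wanted essentially by inspection: $(\tau^*\dv)\bigl([\tau^*b_-,\tau^*a_+]'_+\bigr)=(\tau^*\dv)\bigl(\tau^*([b_-,a_+]_+)\bigr)=\tau^*\bigl(\dv[b_-,a_+]_+\bigr)$, and $(\tau^*b_-)\cdot\bigl((\tau^*\dv)(\tau^*a_+)\bigr)=(\tau^*b_-)\cdot\tau^*(\dv a_+)=\tau^*(b_-\cdot\dv a_+)$.

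The only term requiring a little care is the trace term. As noted after the definition, $c\mapsto[[c,b_-]_-,a_+]_+$ is a $C^\infty(M)$-linear endomorphism $\phi$ of $V_+$, and correspondingly $c'\mapsto[[c',\tau^*b_-]'_-,\tau^*a_+]'_+$ is an endomorphism $\phi'$ of $\tau^*V_+$; the claim is that $\phi'=\tau^*\phi$. Since a local frame $e_a$ of $V_+$ over $U\subset M$ pulls back to a local frame $\tau^*e_a$ of $\tau^*V_+$ over $\tau^{-1}(U)$, it suffices to check this on such a frame, where repeated use of the two preliminary observations gives $\phi'(\tau^*e_a)=[[\tau^*e_a,\tau^*b_-]'_-,\tau^*a_+]'_+=\tau^*\bigl([[e_a,b_-]_-,a_+]_+\bigr)=\tau^*(\phi(e_a))$. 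Taking the trace in the $\la\,,\,\ra$-dual frames then yields $\on{Tr}_{\tau^*V_+}\phi'=\la\tau^*e^a,\phi'(\tau^*e_a)\ra'=\tau^*\la e^a,\phi(e_a)\ra=\tau^*(\on{Tr}_{V_+}\phi)$, using that $\tau^*e^a$ is the $\la\,,\,\ra'$-dual frame of $\tau^*e_a$. Adding the three contributions gives $\gric^{\tau^*E}_{\tau^*V_+,\tau^*\dv}(\tau^*a_+,\tau^*b_-)=\tau^*\bigl(\gric^E_{V_+,\dv}(a_+,b_-)\bigr)$.

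I do not expect a genuine obstacle: the whole proof is a bookkeeping exercise in commuting $\tau^*$ with the defining formula. The one spot deserving attention is the trace term, where one should argue that the relevant endomorphism of $\tau^*V_+$ is literally the pullback of the corresponding endomorphism of $V_+$ — equivalently, that its trace may be computed in pulled-back frames — rather than trying to rewrite it in some other way.
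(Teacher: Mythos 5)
Your proof is correct and takes essentially the same route as the paper, which simply declares the result ``immediate from the definition of $\gric$''; your term-by-term bookkeeping (including the careful handling of the trace in pulled-back dual frames, justified by the pulled-back pairing and the $C^\infty$-linearity of the relevant bundle map) is exactly the verification implicit in that remark.
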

\begin{proof}
Immediate from the definition of $\gric$.
\end{proof}

\subsubsection{(Generalized) string background equations}

Poisson-Lie T-duality for (generalized) string background equations follows from Theorem \ref{thm:PLric} and from the following equally easy result.
\begin{thm}\label{thm:PLR}
If $E\to M$ is a CA, $V_+\subset E$ a generalized metric, $\dv$ a divergence on $E$,  and $\tau^*E$ a CA-pullback then
$$\mathcal R^{\tau^*E}_{\tau^*V_+,\tau^*\dv}=\tau^*(\mathcal R^E_{V_+,\dv})$$
\end{thm}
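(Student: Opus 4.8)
The plan is to prove the identity by computing $\mathcal{R}^{\tau^*E}_{\tau^*V_+,\tau^*\dv}$ using the local-frame formula \eqref{genR}, which expresses the generalized scalar curvature purely in terms of a divergence applied to frame elements and the structure constants $c_{\alpha\beta\gamma}$. The key observation is that all the ingredients of \eqref{genR} pull back in the obvious way. First I would pick a local orthonormal-type frame $e_\alpha$ of $E$ adapted to $V_+$ (so $e_a$ spans $V_+$, $e_{\bar a}$ spans $V_-$, with $\la e_\alpha,e_\beta\ra$ constant), and observe that $\tau^*e_\alpha$ is then such a frame for $\tau^*E$ adapted to $\tau^*V_+$. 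By the defining compatibility relations of a CA-pullback, $[\tau^*e_\beta,\tau^*e_\gamma]' = \tau^*[e_\beta,e_\gamma]$ and $\la\tau^*e_\beta,\tau^*e_\gamma\ra' = \tau^*\la e_\beta,e_\gamma\ra$, so the structure constants satisfy $c'_{\alpha\beta\gamma} = \tau^*c_{\alpha\beta\gamma}$; in particular the contracted quantities $c'_{abc}c'^{abc}$ and $c'_{ab\bar c}c'^{ab\bar c}$ are the pullbacks of the corresponding quantities on $M$.

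Next I would handle the divergence terms. By the characterization of $\tau^*\dv$, we have $(\tau^*\dv)(\tau^*e_a) = \tau^*(\dv e_a)$, and similarly for $e^a$ (note $\tau^*e^a$ is the frame dual to $\tau^*e_a$ since the pairing pulls back). For the second-order term $e^a\cdot(\dv e_a)$ one uses that the anchor satisfies $\tau_*(\rho'(\tau^*u)) = \rho(u)$, hence for any function $h$ on $M$, $\rho'(\tau^*u)\cdot(\tau^*h) = \tau^*(\rho(u)\cdot h)$; applying this with $h = \dv e_a$ and $u = e^a$ gives $(\tau^*e^a)\cdot((\tau^*\dv)(\tau^*e_a)) = \tau^*(e^a\cdot(\dv e_a))$. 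Likewise $(\tau^*\dv)(\tau^*e^a)\cdot(\tau^*\dv)(\tau^*e_a) = \tau^*((\dv e^a)(\dv e_a))$. Assembling these four pullback identities into formula \eqref{genR} yields $\mathcal{R}^{\tau^*E}_{\tau^*V_+,\tau^*\dv} = \tau^*(\mathcal{R}^E_{V_+,\dv})$ locally, and since both sides are globally defined functions the identity holds on all of $M'$.

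There is essentially no hard part here — the statement is, as the authors say of the companion Theorem \ref{thm:PLric}, "equally easy," and the proof is a routine verification that every term in \eqref{genR} commutes with $\tau^*$. The only point that requires a word of care is the existence of an adapted frame $e_\alpha$ of $E$ near an arbitrary point and of a subordinate divergence: any local frame works since \eqref{genR} is frame-independent, and one may always write $\dv$ locally as $\dv_{\sigma^2} + \la e,\cdot\ra$ for a local nowhere-vanishing half-density $\sigma$, so no global assumption (e.g. existence of a half-density on $M$) is needed. I would therefore present the argument compactly as the chain of four pullback identities feeding into \eqref{genR}.

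\begin{proof}
Both sides are functions on $M'$, so it suffices to check the equality locally. Fix a point of $M'$ and choose, near its image in $M$, a local frame $e_\alpha$ of $E$ with $\la e_\alpha,e_\beta\ra$ constant and adapted to $V_+$ (i.e.\ a union of a frame $e_a$ of $V_+$ and a frame $e_{\bar a}$ of $V_-$), together with a local nowhere-vanishing half-density $\sigma$ on $M$, so that $\dv=\dv_{\sigma^2}+\la e,\cdot\ra$ for some local section $e$ of $E$; this is the data used in formula \eqref{genR}. Then $\tau^*e_\alpha$ is a frame of $\tau^*E$ adapted to $\tau^*V_+$, with $\la\tau^*e_\alpha,\tau^*e_\beta\ra'=\tau^*\la e_\alpha,e_\beta\ra$ constant, and (since the pairing pulls back) $\tau^*e^\alpha$ is the dual frame. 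The defining relations of a CA-pullback give
\[
c'_{\alpha\beta\gamma}:=\la\tau^*e_\alpha,[\tau^*e_\beta,\tau^*e_\gamma]'\ra'=\tau^*\la e_\alpha,[e_\beta,e_\gamma]\ra=\tau^*c_{\alpha\beta\gamma},
\]
hence $c'_{abc}c'^{abc}=\tau^*(c_{abc}c^{abc})$ and $c'_{ab\bar c}c'^{ab\bar c}=\tau^*(c_{ab\bar c}c^{ab\bar c})$.

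For the divergence terms, the characterization of $\tau^*\dv$ yields $(\tau^*\dv)(\tau^*e_a)=\tau^*(\dv e_a)$ and $(\tau^*\dv)(\tau^*e^a)=\tau^*(\dv e^a)$, so
\[
\bigl((\tau^*\dv)(\tau^*e^a)\bigr)\bigl((\tau^*\dv)(\tau^*e_a)\bigr)=\tau^*\bigl((\dv e^a)(\dv e_a)\bigr).
\]
Moreover $\tau_*\bigl(\rho'(\tau^*u)\bigr)=\rho(u)$ implies $\rho'(\tau^*u)\cdot(\tau^*h)=\tau^*(\rho(u)\cdot h)$ for all $u\in\Gamma(E)$ and $h\in C^\infty(M)$; applying this with $u=e^a$ and $h=\dv e_a$ gives
\[
(\tau^*e^a)\cdot\bigl((\tau^*\dv)(\tau^*e_a)\bigr)=\tau^*\bigl(e^a\cdot(\dv e_a)\bigr).
\]
Substituting these four identities into \eqref{genR} for $\tau^*E$, $\tau^*V_+$, $\tau^*\dv$, we obtain $\mathcal R^{\tau^*E}_{\tau^*V_+,\tau^*\dv}=\tau^*\bigl(\mathcal R^E_{V_+,\dv}\bigr)$ on the chosen neighbourhood, and therefore on all of $M'$.
\end{proof}
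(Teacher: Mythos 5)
Your argument is correct and is exactly the paper's proof, which simply notes that the identity is immediate from formula \eqref{genR} once one uses the pulled-back frame $\tau^*e_\alpha$ for $\tau^*E$; you have just spelled out the term-by-term verification (structure constants, divergence values, and the anchor compatibility) that the paper leaves implicit. No gaps — the only extra care you take (local frames, local half-density) is harmless and consistent with the paper's conventions.
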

\begin{proof}
Immediate from Equation \eqref{genR}, if for the local frame of $\tau^*E$ we use $\tau^*e_\alpha$.
\end{proof}

As a result, if $V_+\subset E$ and $\dv$ satisfy the generalized string background equations then so do $\tau^*V_+\subset \tau^*E$ and $\tau^*\dv$.

\begin{rem}\label{rem:mugen}
The problem of the \emph{existence of a dilaton}  boils down to the following: if $\dv=\dv_{\sigma^2}$ for a half-density $\sigma$ on $M$, is $\tau^*\dv=\dv_{(\sigma')^2}$ for some half-density $\sigma'$ on $M'$? The answer is as follows. Let us suppose (as is true in physically relevant situations) that $\tau:M'\to M$ is a submersion. Then $\sigma'=\varsigma\,\tau^*\sigma$ for some fibrewise half-density $\varsigma$ on the fibres of $\tau:M'\to M$, and $\tau^*\dv=\dv_{(\sigma')^2}$ iff 
$$\mathcal L_{\rho'(\tau^*u)}\varsigma=0\qquad\forall u\in\Gamma(E)$$
(this equality makes sense as $\rho'(\tau^*u)$ are fibration-preserving vector fields on $M'$). 

Alternatively, we have 
$$\Delta^{E'}_{V_+'}(\varsigma\,\tau^*\sigma)=\varsigma\,\tau^*\Delta^E_{V_+}\sigma$$
for every half-density $\sigma$ on $M$, so if $\Delta^E_{V_+}\sigma=0$ and $\sigma'=\varsigma\,\tau^*\sigma$ then $\Delta^{E'}_{V_+'}\sigma'=0$.
\end{rem}

\begin{rem}
We formulated PL T-duality in terms of CA-pullbacks. A more general, and arguably more conceptual approach is via Dirac relations (see e.g.\ \cite{LBM} for definitions). If $V^{(1)}_+\subset E^{(1)}$ and $V^{(2)}_+\subset E^{(2)}$ are generalized metrics and if $C\to N$ is a Dirac structure in $\overline{E^{(1)}}\times E^{(2)}$ (where $N\subset M^{(1)}\times M^{(2)}$ is a submanifold) then $C$ gives rise to a Lagrangian relation between the phase spaces given by $E^{(1)}$ and $E^{(2)}$. Moreover, if $C$ is compatible with $V^{(1)}_+$ and $V^{(2)}_+$ then this Lagrangian relation is, up to finitely many degrees of freedom, an isomorphism of Hamiltonian systems.

We leave the problem of compatibility of this more general T-duality with the Ricci flow and with the string background equations to a future work; one important problem is to find a source of examples which are not examples of (possibly equivariant) PL T-duality. The existence of the dilaton should correspond to unimodularity of the Dirac structure $C$.
\end{rem}

\subsection{Equivariant CAs and reduction}

If $\mathfrak k$ is a Lie algebra with a (possibly degenerate) invariant symmetric bilinear pairing $\la,\ra_{\mathfrak k}$, a \emph{$(\mathfrak k,\la,\ra_{\mathfrak k})$-equivariant CA} is a CA $E\to M$ together with a linear map $\chi:\mathfrak k\to\Gamma(E)$ satisfying
$$[\chi(u),\chi(v)]=\chi([u,v]),\quad\la\chi(u),\chi(v)\ra=\la u,v\ra_{\mathfrak k}$$
which is injective at every point of $M$ (the last condition is void if $\la,\ra_{\mathfrak k}$ is non-degenerate).
The derivations $[\chi(u),\cdot]$ give an action of ${\mathfrak k}$ on $E$ and the vector fields $\rho(\chi(u))$ an action of ${\mathfrak k}$ on $M$. If this action integrates to an action of a connected Lie group $\mathsf K$ with the Lie algebra ${\mathfrak k}$, we shall say that $E$ is $(\mathsf K,\la,\ra_{\mathfrak k})$-equivariant.

Equivariant CAs can be reduced in the following way \cite{red,PLC}. If  $E\to M$ is a $\mathsf K$-equivariant CA such that the  action of $\mathsf K$ on $M$ is free and proper, let  
$$(E_{/\mathsf K})_x:=(\chi_x({\mathfrak k}))^\perp/\chi_x({\mathfrak k}')\quad(\forall x\in M)$$
where ${\mathfrak k}'\subset{\mathfrak k}$ is the kernel of $\la,\ra_{\mathfrak k}$. After taking the quotient by $\mathsf K$, $E_{/\mathsf K}$ becomes a vector bundle $E_{/\mathsf K}\to M/\mathsf K$, and the CA structure on $E\to M$ descends to a CA structure on $E_{/\mathsf K}\to M/\mathsf K$. If $E$ is exact and $\la,\ra_{\mathfrak k}=0$ then $E_{/\mathsf K}$ is also exact; for a general $\la,\ra_{\mathfrak k}$ the CA $E_{/\mathsf K}$ is only transitive (i.e.\ its anchor map is surjective).

\begin{example}\label{ex:spect} 
Let us suppose that a connected Lie group $\mathsf G$ acts freely and properly on a manifold $P$, so that $P\to M:=P/\mathsf G$ is a principal $\mathsf G$-bundle, and that $\la,\ra_\g$ is a non-degenerate invariant pairing (i.e.\ that $\g$ is a CA over a point). Then $\mathsf G$-equivariant exact CAs $E_P\to P$ exist iff the 1st Pontryagin class $[\la F,F\ra_\g]\in H^4(M,\R)$ of $P\to M$ vanishes, and are classified by classes $\omega\in\Omega^3(M)/d\Omega^2(M)$ such that $d\omega=\la F,F\ra_\g$.

In this case $E:=(E_P)_{/\mathsf G}\to M$ is a transitive CA. If $\mathsf H\subset\mathsf G$ is such that $\h^\perp=\h$ then $E/_{\mathsf H}\to P/\mathsf H$ is an exact CA and moreover $E/_\mathsf H$ is naturally a CA-pullback of $E\to M$ under the projection $P/\mathsf H\to M$. This is the main source of CA-pullbacks for the purpose of PL T-duality. See \cite{PLC} for details. (If $\h$ is coisotropic, i.e.\ $\h^\perp\subset\h$, then $E/_{\mathsf H}$ is transitive, and it is still a CA-pullback of $E$.)
\end{example}

\emph{From now on we shall always suppose that $\s$ is a Lie algebra with $\la,\ra|_\s=0$ (which implies $\s'=\s$) and that $\mathsf S$ is a connected Lie group integrating $\s$.}

If $E$ is $\mathsf S$-equivariant and if the action of $\mathsf S$ is free and proper then
\begin{equation}\label{identif}C^\infty(M/\mathsf S)\cong C^\infty(M)^\s,\qquad\Gamma(E_{/\mathsf S})\cong \Gamma(\s_E^\perp)^\s/\Gamma(\s_E)^\s,\end{equation}
where $\s_E$ and $\s_E^\perp$ are the subbundles of $E$ given fiberwise by $\chi(\s)$ and $\chi(\s)^\perp$ respectively, and superscript $\s$ means we are considering invariant sections, e.g.
\[C^\infty(M)^\s=\{f\in C^\infty(M)\mid \chi(s)\cdot f=0\quad\forall s\in \s\},\]
\[\Gamma(\s_E^\perp)^\s=\{x\in \Gamma(\s^\perp_E)\mid [\chi(s),x]=0 \quad \forall s\in \s\}.\]

If $E\to M$ is an $\s$-equivariant CA and $\dv$ a divergence on $E$, we shall say that $\dv$ is \emph{equivariant} if 
\[\chi(s)\cdot \dv x=\dv[\chi(s),x],\quad\forall s\in\s,\,\forall x\in\Gamma(E).\] 
and if
\begin{equation}\label{unimod}
\dv \chi(s)=-\on{Tr}_\s ad_s\quad \forall s\in \s.
\end{equation}

\begin{prop}\label{divdescends}
Suppose $E$ is $\mathsf S$-equivariant such that the action of $\mathsf S$ is free and proper. If $\dv$ is an equivariant divergence  on $E$ then it descends, via the identification \eqref{identif}, to a divergence on the reduced CA $E/_{\mathsf S}$.
\end{prop}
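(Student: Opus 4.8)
The plan is to produce the reduced divergence explicitly from the identification \eqref{identif}: given $\bar x\in\Gamma(E_{/\mathsf S})$, lift it to an invariant section $x\in\Gamma(\s_E^\perp)^\s$ and declare $\dv_{/\mathsf S}\,\bar x:=\dv x$ (this makes sense since $\s_E\subseteq\s_E^\perp$, as $\la\,,\,\ra|_\s=0$, so the quotient in \eqref{identif} is legitimate). First I would check that $\dv x$ is $\s$-invariant, hence descends to a function on $M/\mathsf S$: for every $s\in\s$, the equivariance of $\dv$ together with the invariance of $x$ give $\chi(s)\cdot\dv x=\dv[\chi(s),x]=\dv 0=0$. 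So $\dv_{/\mathsf S}$ is at least a well-defined $\R$-linear map into $C^\infty(M)^\s\cong C^\infty(M/\mathsf S)$ on the level of lifts.

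The key — and only non-formal — step is independence of the lift. Two lifts of $\bar x$ differ by some $y\in\Gamma(\s_E)^\s$, and I must show $\dv y=0$. Since $\chi$ is fibrewise injective it identifies $\Gamma(\s_E)$ with $C^\infty(M,\s)$, and the relation $[\chi(u),\chi(v)]=\chi([u,v])$ intertwines the derivation $[\chi(s),\,\cdot\,]$ with the standard $\s$-action on $C^\infty(M,\s)$; hence $y=\chi(\phi)$ with $[\chi(s),\chi(\phi)]=0$ for all $s$. Expanding in a basis $(s_i)$ of $\s$, writing $\phi=\phi^i s_i$, and using the Leibniz rule for $\dv$,
$$\dv\chi(\phi)=\sum_i\bigl(\phi^i\,\dv\chi(s_i)+\rho(\chi(s_i))\cdot\phi^i\bigr).$$
By \eqref{unimod} the first sum is $-\sum_i\phi^i\on{Tr}_\s(ad_{s_i})=-\on{Tr}_\s(ad_\phi)$; the component form $\rho(\chi(s_i))\cdot\phi^k=-(ad_{s_i}\phi)^k$ of the invariance of $\phi$, together with the identity $\sum_i(ad_{s_i}\phi)^i=-\on{Tr}_\s(ad_\phi)$ (a short computation using antisymmetry of the structure constants), makes the second sum equal to $+\on{Tr}_\s(ad_\phi)$. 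These cancel, so $\dv\chi(\phi)=0$. I expect this cancellation to be the crux of the whole statement: it is exactly why condition \eqref{unimod} is built into the definition of an equivariant divergence — the trace it prescribes is precisely the correction needed to kill the trace produced by differentiating the invariant $\s$-valued function $\phi$, and neither requirement alone suffices.

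Finally I would verify the divergence axiom for $\dv_{/\mathsf S}$. For an invariant $x\in\Gamma(\s_E^\perp)^\s$ the vector field $\rho(x)$ commutes with every $\rho(\chi(s))$, hence is $\mathsf S$-invariant, projectable, and descends to $\rho_{/\mathsf S}(\bar x)$ by the construction of the reduced CA; moreover $C^\infty(M)^\s$ is exactly the pullback of $C^\infty(M/\mathsf S)$, and under \eqref{identif} multiplying $\bar x$ by $\bar f$ corresponds to multiplying the lift $x$ by the invariant lift $f$ of $\bar f$. Thus $\bar f\bar x$ is represented by $fx\in\Gamma(\s_E^\perp)^\s$, and
$$\dv_{/\mathsf S}(\bar f\bar x)=\dv(fx)=f\,\dv x+\rho(x)\cdot f,$$
which is the pullback of $\bar f\,\dv_{/\mathsf S}\bar x+\rho_{/\mathsf S}(\bar x)\cdot\bar f$. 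Hence $\dv_{/\mathsf S}$ satisfies the Leibniz rule and is a divergence on $E_{/\mathsf S}$. Everything in this last paragraph, and in the first, is routine once one accepts the behaviour of $\rho$ under reduction; the real content is the two-line trace cancellation in the middle paragraph.
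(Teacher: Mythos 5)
Your proof is correct and follows essentially the same route as the paper: the whole content is the observation that for an invariant section $y=\phi^i\chi(s_i)$ of $\s_E$ the invariance forces $\chi(s_i)\cdot\phi^i=\phi^j\on{Tr}_\s ad_{s_j}$, which cancels against the normalization \eqref{unimod}, so $\dv y=0$; your explicit checks of $\s$-invariance of $\dv x$ and of the Leibniz rule for the reduced map are the same routine verifications the paper leaves implicit.
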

\begin{proof}
Equivariance implies that $\dv$ restricts to a map $\Gamma(E)^\s\to C^\infty(M)^\s$. We thus only have to show that $\dv \Gamma(\s_E)^\s=0$.

Suppose $u_i$ is a basis of $\s$ and consider $x=x^i \chi(u_i)\in\Gamma(\s_E)^\s$, $x^i\in C^\infty(M)$. By $\s$-invariance, for all $i$ we have
$0=[\chi(u_i),x]=x^j[\chi(u_i),\chi(u_j)]+(\chi(u_i)\cdot x^j)\chi(u_j)$,
so $\chi(u_i)\cdot x^i=x^j\on{Tr}_\s ad_{u_j}$. Now
\[\dv x=x^i\dv \chi(u_i)+\chi(u_i)\cdot x^i=x^i(\dv \chi(u_i)+\on{Tr}_\s ad_{u_i})=0.\]
\end{proof}
\begin{rem}
If $\dv$ comes from a density on $M$ then the induced divergence on $E_{/\mathsf S}$ also comes from a density on $M/\mathsf S$.
\end{rem}

Finally, let us show that $\gric$  and $\mathcal R$ are compatible with the CA reductions.
\begin{defn}
Let $E\to M$ be an $\s$-equivariant CA.  A generalized pseudometric $V_+\subset E$ is \emph{admissible} if it is $\s$-invariant and if
\begin{equation}\label{V+red}
V_+\subset\s_E^\perp
\end{equation}
(i.e.\ if $\la\chi(s),a_+\ra=0$ for all $s\in\s,a_+\in\Gamma(V_+)$).
\end{defn}

If the action of $\mathsf S$ is free and proper, an admissible $V_+\subset E$ descends to a generalized pseudometric  $\tilde V_+\subset E_{/\mathsf S}$ (of the same rank as $V_+$); we have 
$$\Gamma(\tilde V_+)\cong\Gamma(V_+)^\s,\quad \Gamma(\tilde V_-)\cong\Gamma(V_-\cap\s^\perp_E)^\s/\Gamma(\s_E)^\s$$ 
Let us choose an equivariant divergence $\dv$ on $E$ and let $\widetilde\dv$ be the corresponding divergence on $E_{/\mathsf S}$.

\begin{thm}\label{thm:gricred}
Under the above assumptions, if $a_+\in\Gamma(V_+)^\s$ and $b_-\in\Gamma(V_-\cap\s^\perp_E)^\s$, and if $\tilde a_+$ and $\tilde b_-$ are the corresponding sections of $\tilde V_+$ and $\tilde V_-$, then
$$\gric^E_{V_+,\dv}(a_+,b_-)=\pi^*\gric^{E_{/\mathsf S}}_{\tilde V_+,\widetilde\dv}(\tilde a_+,\tilde b_-)$$
where $\pi:M\to M/\mathsf S$ is the projection.
In other words, reduction of admissible generalized pseudometrics is compatible with the generalized Ricci flow.
\end{thm}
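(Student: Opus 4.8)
The strategy is to verify that each of the three terms in formula \eqref{gricci} descends along the reduction; since $\gric$ is built only from the Courant bracket, the orthogonal projections, the anchor, and the divergence, this should be essentially formal once the bookkeeping is set up. The first thing I would establish is that hypothesis \eqref{V+red} makes the decomposition $E=V_+\oplus V_-$ compatible with the reduction. Because $\la\cdot,\cdot\ra$ is nondegenerate, $V_+\subset\s_E^\perp$ implies $\s_E=(\s_E^\perp)^\perp\subset V_-$, and since $\s_E$ is isotropic, $\s_E\subset V_-\cap\s_E^\perp$. Writing any $e\in\s_E^\perp$ as $e_++e_-$ with $e_\pm\in V_\pm$ shows $e_-\in V_-\cap\s_E^\perp$, so $\s_E^\perp=V_+\oplus(V_-\cap\s_E^\perp)$ as an $\s$-invariant splitting; dividing by $\s_E\subset V_-\cap\s_E^\perp$ this descends to $E_{/\mathsf S}=\tilde V_+\oplus\tilde V_-$. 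In particular the quotient maps induce fibrewise isomorphisms $(V_+)_x\cong(\tilde V_+)_{\pi(x)}$ underlying $\Gamma(V_+)^\s\cong\Gamma(\tilde V_+)$, and the orthogonal projection onto $\tilde V_+$ (resp.\ $\tilde V_-$) in $E_{/\mathsf S}$ is induced by the projection onto $V_+$ (resp.\ $V_-$) in $E$ applied to any representative in $\Gamma(\s_E^\perp)^\s$.

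Next I would record the bracket computations. For $c_+,a_+\in\Gamma(V_+)^\s$ and $b_-\in\Gamma(V_-\cap\s_E^\perp)^\s$ the identity $[x,\chi(s)]=d_E\la x,\chi(s)\ra-[\chi(s),x]$ together with invariance and $\la x,\chi(s)\ra=0$ shows $[c_+,\chi(s)]=[a_+,\chi(s)]=[b_-,\chi(s)]=0$, and then the Leibniz and invariance axioms give $[b_-,a_+],[c_+,b_-]\in\Gamma(\s_E^\perp)^\s$, with the same holding for iterated brackets of such sections. Hence $[b_-,a_+]_+\in\Gamma(V_+)^\s$ descends, and by the construction of the reduced Courant bracket on $\Gamma(\s_E^\perp)^\s/\Gamma(\s_E)^\s$ its class is exactly $[\tilde b_-,\tilde a_+]_+$; likewise $[\,\cdot\,,b_-]_-$ maps $\Gamma(V_+)^\s$ into $\Gamma(V_-\cap\s_E^\perp)^\s$ and descends to $[\,\cdot\,,\tilde b_-]_-\colon\tilde V_+\to\tilde V_-$, and $[\,\cdot\,,a_+]_+$ descends analogously. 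Consequently the $C^\infty(M)$-linear bundle endomorphism $[[\,\cdot\,,b_-]_-,a_+]_+$ of $V_+$ is intertwined, through $(V_+)_x\cong(\tilde V_+)_{\pi(x)}$, with the endomorphism $[[\,\cdot\,,\tilde b_-]_-,\tilde a_+]_+$ of $\tilde V_+$, so $\on{Tr}_{V_+}[[\,\cdot\,,b_-]_-,a_+]_+=\pi^*\on{Tr}_{\tilde V_+}[[\,\cdot\,,\tilde b_-]_-,\tilde a_+]_+$.

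For the two remaining terms I would use Proposition \ref{divdescends} and the anchor. Under \eqref{identif}, $\widetilde\dv$ is by definition induced by $\dv|_{\Gamma(\s_E^\perp)^\s}$, so $\pi^*(\widetilde\dv\,\tilde\xi)=\dv\,\xi$ for every $\xi\in\Gamma(\s_E^\perp)^\s$; taking $\xi=[b_-,a_+]_+$ gives $\pi^*(\widetilde\dv[\tilde b_-,\tilde a_+]_+)=\dv[b_-,a_+]_+$. For the middle term, equivariance of $\dv$ and invariance of $a_+$ give $\dv\,a_+\in C^\infty(M)^\s$, equal to $\pi^*(\widetilde\dv\,\tilde a_+)$, and since $\rho(b_-)$ is $\pi$-related to $\rho_{E_{/\mathsf S}}(\tilde b_-)$ by construction of the reduced anchor, $b_-\cdot\dv\,a_+=\pi^*(\tilde b_-\cdot\widetilde\dv\,\tilde a_+)$. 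Combining the three contributions with the signs of \eqref{gricci} yields the asserted equality, and the statement about the generalized Ricci flow follows since the flow of $V_+$ is determined by $\gric_{V_+,\dv}$ viewed as a map $\tilde V_+\to\tilde V_-$. The one point that requires genuine care is the quotient by $\Gamma(\s_E)^\s$: one must be sure that projecting a representative onto $V_\pm$ returns the correct representative of the reduced object and not one shifted by a section of $\s_E$ — this is precisely guaranteed by the compatible splitting $\s_E^\perp=V_+\oplus(V_-\cap\s_E^\perp)$ with $\s_E$ contained in the second summand.
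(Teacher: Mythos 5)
Your proposal is correct and follows the same route as the paper, which simply observes that every ingredient of \eqref{gricci} (bracket, projections, anchor, divergence) on $E_{/\mathsf S}$ is induced from $E$; you have merely spelled out the bookkeeping — the splitting $\s_E^\perp=V_+\oplus(V_-\cap\s_E^\perp)$, the descent of the brackets and of $\dv$ via Proposition \ref{divdescends}, and the pointwise identification of the trace term — which the paper leaves implicit. No gaps; the care you take with the quotient by $\Gamma(\s_E)^\s$ is exactly the right point to check.
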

\begin{proof}
This is an immediate corollary of the fact that all the structures (the CA bracket, generalized metric, divergence, etc.) involved in the definition of $\gric$ on $E_{/\mathsf S}$ are induced by the corresponding structures on $E$.
\end{proof}

\begin{thm}\label{thm:Rred}
Under the above assumptions
$$\mathcal R^E_{V_+,\dv}=\pi^*\mathcal R^{E_{/\mathsf S}}_{\tilde V_+,\widetilde\dv}$$
where $\pi:M\to M/\mathsf S$ is the projection.
\end{thm}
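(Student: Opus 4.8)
The plan is to compute $\mathcal R^E_{V_+,\dv}$ from the local formula \eqref{genR} using a frame of $E$ adapted to the reduction, and to recognize each term as the $\pi$-pullback of the corresponding term in \eqref{genR} for $E_{/\mathsf S}$. Over a small open set of $M/\mathsf S$ I would pick a frame $\tilde e_a$ of $\tilde V_+$ and $\tilde e_{\bar a}$ of $\tilde V_-$ with constant pairings and, using \eqref{identif}, lift them to $\s$-invariant sections $e_a\in\Gamma(V_+)^\s$ and $e_{\bar a}\in\Gamma(V_-\cap\s_E^\perp)^\s$. Since admissibility gives $\s_E\subset V_+^\perp=V_-$ and $\s_E$ is $\la\,,\ra$-isotropic, I can complete $\{e_a\}\cup\{e_{\bar a}\}$ to a frame of $E$ by adjoining a frame $f_i=\chi(u_i)$ of $\s_E$ ($u_i$ a basis of $\s$) together with sections $f^i\in\Gamma(V_-)$ with $\la f_i,f^j\ra=\delta_i^{\,j}$; subtracting suitable combinations of the $e_{\bar a}$ and $f_j$ from the $f^i$ one may moreover arrange that the whole frame $\{e_a;\,e_{\bar a},f_i,f^i\}$ has all pairings constant, with $e_a$ spanning $V_+$ and $e_{\bar a},f_i,f^i$ spanning $V_-$.

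The crucial point is that the $\s_E$-directions do not contribute. For $a_+,b_+\in\Gamma(V_+)^\s$ and $s\in\s$ the invariance axiom of the pairing gives $a_+\cdot\la b_+,\chi(s)\ra=\la[a_+,b_+],\chi(s)\ra+\la b_+,[a_+,\chi(s)]\ra$; here $\la b_+,\chi(s)\ra=0$ by admissibility and $[a_+,\chi(s)]=d_E\la a_+,\chi(s)\ra-[\chi(s),a_+]=0$ (admissibility and $\s$-invariance of $a_+$), so $\la\chi(s),[a_+,b_+]\ra=0$. Hence the components $c_{ab,f_i}=\la f_i,[e_a,e_b]\ra$ vanish. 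As the inverse of $\la\,,\ra|_{V_-}$ is block-diagonal for the decomposition $V_-=\on{span}\{e_{\bar a}\}\oplus\s_E\oplus\on{span}\{f^i\}$ with $\on{span}\{e_{\bar a}\}$ orthogonal to the rest, raising the $V_-$-index pairs $f_i$ only with $f^i$; since $c_{ab,f_i}=0$, both the $f_i$- and the $f^i$-directions drop out of $c_{ab\bar c}\,c^{ab\bar c}$, leaving only the sum over $\{e_{\bar a}\}$. The term $c_{abc}\,c^{abc}$ involves only $V_+$ and is untouched.

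It then remains to match the surviving terms with $E_{/\mathsf S}$. The bracket of two $\s$-invariant sections of $\s_E^\perp$ again lies in $\Gamma(\s_E^\perp)^\s$ and represents the bracket on $E_{/\mathsf S}$; hence $c_{abc}=\la e_a,[e_b,e_c]\ra$ and $c_{ab\bar a}=\la e_{\bar a},[e_a,e_b]\ra$ are the $\pi$-pullbacks of the corresponding $\tilde c$'s, and so are the raised versions, since $\la\,,\ra$ on $V_+$ and on $\on{span}\{e_{\bar a}\}$ agrees with that on $\tilde V_\pm$. For the divergence terms, equivariance of $\dv$ gives $\dv e_a,\dv e^a\in C^\infty(M)^\s$, and by Proposition \ref{divdescends} and the definition of $\widetilde\dv$ one has $\dv e_a=\pi^*(\widetilde\dv\tilde e_a)$, $\dv e^a=\pi^*(\widetilde\dv\tilde e^a)$; since $\rho(e^a)$ is $\pi$-related to the anchor of $\tilde e^a$ on $E_{/\mathsf S}$, also $e^a\cdot\dv e_a=\pi^*(\tilde e^a\cdot\widetilde\dv\tilde e_a)$. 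Plugging these into \eqref{genR} yields $\mathcal R^E_{V_+,\dv}=\pi^*\mathcal R^{E_{/\mathsf S}}_{\tilde V_+,\widetilde\dv}$.

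I expect the only genuine difficulty to be the bookkeeping around the adapted frame — specifically, confirming that the $\s_E$- and $f^i$-directions make no contribution to $c_{ab\bar c}c^{ab\bar c}$, which rests entirely on the vanishing $\la\chi(s),[a_+,b_+]\ra=0$ above. Once that is in hand, the rest is the same observation as in the proof of Theorem \ref{thm:gricred}, namely that the bracket, anchor, pairing and divergence on $E_{/\mathsf S}$ are the ones induced from $E$.
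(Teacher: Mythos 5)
Your proposal is correct and follows essentially the same route as the paper: compute $\mathcal R$ via \eqref{genR} in an $\s$-invariant frame lifted from $E_{/\mathsf S}$, and use the same key vanishing $\la\chi(s),[e_a,e_b]\ra=0$ (from admissibility and invariance) to see that the $\s_E$-directions do not contribute to $c_{ab\bar c}c^{ab\bar c}$. The only difference is presentational: you carry out explicit bookkeeping with an adapted frame $\{e_{\bar a},f_i,f^i\}$ of $V_-$, where the paper phrases the same point invariantly as $[e_a,e_b]_-\in\Gamma(\s_E^\perp)$, so the pairing can be computed in $\s_E^\perp/\s_E$.
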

\begin{proof}
We can choose the local frame $e_a$ of $V_+$ to be $\s$-invariant, so that it descends to a local frame of $\tilde V_+$. The first 3 terms in the formula \eqref{genR} for $\mathcal R$ are then the same in $E$ and in $E_{/\mathsf S}$, so we must only concentrate on the last term 
\begin{equation}\label{ccred}
c_{ab\bar c}c^{ab\bar c}=\la[e_a,e_b]_-,[e^a,e^b]_-\ra.
\end{equation}

For any $s\in\s$ we have 
$$\la\chi(s),[e_a,e_b]\ra=e_a\cdot\la\chi(s),e_b\ra-\la [e_a,\chi(s)],e_b \ra=0$$
and thus $[e_a,e_b]_-\in\Gamma(\s_E^\perp)$. This implies that \eqref{ccred} gives the same result in $E$ and in $\pi^*(E_{/\mathsf S})=\s_E^\perp/\s_E$.
\end{proof}

\begin{rem}
A similar result is valid for the Laplace operator $\Delta_{V_+}$: if we identify half-densities on $M/\mathsf S$ with equivariant half-densities on $M$ (this requires a choice of a non-zero element of $|\bigwedge^{top}\s|^{1/2}$), then $\Delta_{V_+}$ restricted to the equivariant half-densities is equal to $\Delta_{\tilde V_+}$. We leave the details to the reader.
\end{rem}

\subsection{Dressing cosets, or equivariant Poisson-Lie T-duality}\label{sec:equiv}

\subsubsection{Outlook}

Poisson-Lie T-duality, as we described it so far, was about CAs and CA-pullbacks (and pullbacks of generalized metrics and divergences). Its original motivation is the study of 2-dim $\sigma$-models.

The same strategy can be followed for equivariant CAs (and admissible generalized pseudometrics and equivariant divergences); on the Physics side $\sigma$-models get replaced with gauged $\sigma$-models.

We shall then reduce the equivariant CAs obtained by CA-pullbacks, and get just plain CAs (and generalized (pseudo)metrics and divergences); this can be done under the assumption of free and proper actions. The gauged $\sigma$-models then become ordinary $\sigma$-models on the quotient targets.

This type of equivariant PL T-duality was introduced in \cite{KS3} under the name ``PL T-duality of dressing cosets''.

\subsubsection{Setup}

Let $E\to M$ be an $\s$-equivariant CA, $V_+\subset E$ an admissible generalized pseudometric, and $\dv$ an equivariant divergence.

\begin{example}\label{ex:eqPL}
The simplest type of our setup is $M=\text{point}$, $E=\g$ is a quadratic Lie algebra, $\dv=0$,  $\s\subset\g$ a unimodular isotropic Lie subalgebra, and $V_+\subset \g$ a $\s$-invariant vector subspace s.t.\ $V_+\subset \s^\perp$ and s.t.\ $\la,\ra|_{V_+}$ is non-degenerate.
\end{example}

Suppose now that $\tau: M'\to M$ is a smooth map and that we have a CA-pullback structure on $E':=\tau^*E\to M'$. Then $E'$ is also $\s$-equivariant, with $\chi'(s)=\tau^*\chi(s)$ ($\forall s\in\s$). Moreover $\tau^*V_+$ is admissible and $\tau^*\dv$ is equivariant.

If the action of $\s$ on $M'$ integrates to a free and proper action of $\mathsf S$ on $M'$  then we get the reduced CA $E'_\redu:=(\tau^*E)/_{\mathsf S}\to M'/\mathsf S=:M'_\redu$, and
$\tau^* V_+$ and $\tau^*\dv$ descend to a generalized pseudometric $V'_{+,\redu}\subset E'_\redu$ (of the same rank as $V_+$) and to a divergence $\dv'_\redu$ in $E'_\redu$.

\emph{Equivariant PL T-duality} is then a collection of statements that various properties of $(V'_{+,\redu}\subset E'_\redu,\dv'_\redu)$ can be expressed in terms of $(V_+\subset E,\dv,\chi)$.

\subsubsection{Sigma models}
As $\sigma$-models as Hamiltonian systems are not the subject of  this paper, we just describe the main idea: If $E$ is $\mathsf S$-equivariant, we get a Hamiltonian action of the loop group $L\mathsf S$ on the corresponding phase space. The phase space corresponding to $E'_\redu$ is then obtained from the one corresponding to $E'$ by the symplectic reduction w.r.t.\ $L\mathsf S$. One thus just needs to combine the results of  \S\ref{sec:sigmaPL} with the symplectic reduction by $L\mathsf S$.

\subsubsection{Ricci flow} Let $\mathcal M_E$ be the space of all generalized pseudometrics on $E$ and $\mathcal N_E\subset\mathcal M_E$ the subset of admissible generalized pseudometrics.

Recall that $\gric^E_{\dv}$ can be seen as a vector field on $\mathcal M_E$.

\begin{prop}
$\gric^E_{\dv}$, seen as a vector field on $\mathcal M_E$, is tangent to $\mathcal{N}_E\subset\mathcal M_E$.
\end{prop}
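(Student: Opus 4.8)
The plan is to show that the generalized Ricci tensor $\gric^E_{V_+,\dv}(a_+,b_-)$, viewed as defining an infinitesimal deformation of $V_+$ (a section of $V_+^*\otimes V_-$), lands in the subspace tangent to $\mathcal N_E\subset\mathcal M_E$. A tangent vector $\varphi\in\Gamma(V_+^*\otimes V_-)$ is tangent to $\mathcal N_E$ at an admissible $V_+$ precisely when the deformed $V_+$ stays $\s$-invariant and stays inside $\s_E^\perp$. The condition $V_+\subset\s_E^\perp$ is linear, so its linearization is the statement that $\la\chi(s),\varphi(a_+)\ra=0$ for all $s\in\s$ and $a_+\in\Gamma(V_+)$; equivalently, since $\varphi$ maps into $V_-$, that the ``$\s$-components'' of the deformation vanish. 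The $\s$-invariance condition linearizes to: the infinitesimal deformation $\varphi$ commutes with the action $[\chi(s),\cdot]$ on $E$ (using that $V_+$ is already invariant). So I must verify these two things for $\varphi=\gric^E_{V_+,\dv}$.

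First I would treat the condition $\gric^E_{V_+,\dv}(a_+,\cdot)\in(\s_E^\perp/\dots)$, i.e. that $\gric^E_{V_+,\dv}(a_+,b_-)=0$ whenever $b_-=\chi(s)$ for some $s\in\s$ (note $\chi(s)\in\Gamma(V_-)$ automatically, since $V_+\subset\s_E^\perp$ forces $\s_E\subset V_-$). Here I would plug $b_-=\chi(s)$ directly into the defining formula \eqref{gricci}:
\begin{equation*}
\gric_{V_+,\dv}(a_+,\chi(s))=\dv[\chi(s),a_+]_+-\chi(s)\cdot\dv a_+-\on{Tr}_{V_+}[[\,\cdot\,,\chi(s)]_-,a_+]_+.
\end{equation*}
Using equivariance of $\dv$ (so $\chi(s)\cdot\dv a_+=\dv[\chi(s),a_+]$) together with the fact that $[\chi(s),a_+]\in\Gamma(V_+)$ for admissible $V_+$ (because $[\chi(s),\cdot]$ preserves $V_+$, which is part of $\s$-invariance), the first two terms cancel: $\dv[\chi(s),a_+]_+-\chi(s)\cdot\dv a_+=\dv[\chi(s),a_+]-\dv[\chi(s),a_+]=0$. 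For the trace term, I would argue $[\,\cdot\,,\chi(s)]_-$ vanishes on $V_+$: indeed $[a_+',\chi(s)]=-[\chi(s),a_+']+d_E\la a_+',\chi(s)\ra$; the pairing term vanishes since $V_+\subset\s_E^\perp$, so $[a_+',\chi(s)]=-[\chi(s),a_+']\in\Gamma(V_+)$, hence its $V_-$-projection is zero and the trace is zero. This disposes of the first condition.

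Next I would handle $\s$-invariance of the deformation, i.e. that $[\chi(s),\cdot]$-invariance of $V_+$ is preserved to first order along the $\gric$-deformation. This is equivalent to the Leibniz-type identity
\begin{equation*}
\chi(s)\cdot\gric_{V_+,\dv}(a_+,b_-)=\gric_{V_+,\dv}([\chi(s),a_+],b_-)+\gric_{V_+,\dv}(a_+,[\chi(s),b_-]),
\end{equation*}
which says $\gric_{V_+,\dv}$ is an $\s$-invariant section of $V_+^*\otimes V_-^*$. This should follow because every ingredient entering \eqref{gricci} — the bracket $[\,,\,]$, the projections $(\cdot)_\pm$ (since $V_+$ and hence $V_-$ are $\s$-invariant), the trace $\on{Tr}_{V_+}$, and the divergence $\dv$ (by equivariance) — intertwines the $\s$-action on $E$ with the $\s$-action on functions; a derivation applied to a composite of $\s$-equivariant operations obeys the Leibniz rule. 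I expect this to be essentially a formal bookkeeping check, analogous to the argument in the proof of Theorem \ref{thm:gricred}.

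The main obstacle, such as it is, is purely organizational: one must be careful that the deformation $\gric_{V_+,\dv}$ really is valued in $\on{Hom}(V_+,V_-)$ (using the identification via $\la,\ra$) and that ``tangent to $\mathcal N_E$'' is correctly translated into the two pointwise-linear conditions above — in particular that $\chi(s)$ for $s\in\s$ spans exactly the directions one must kill, and that the injectivity of $\chi$ plus admissibility make $\s_E$ a subbundle of $V_-$. Once the two conditions are correctly identified, the verification is short: the first is the cancellation computation using equivariance of $\dv$ plus $\la V_+,\s_E\ra=0$, and the second is the Leibniz rule for $\s$-equivariant operations. No short-time existence or analytic input is needed, since we only assert tangency of the vector field.
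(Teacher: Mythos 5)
Your proposal is correct and follows essentially the same route as the paper: tangency to the $\s$-invariant locus is the formal equivariance statement, and the substantive point is $\gric_{V_+,\dv}(a_+,\chi(s))=0$, obtained exactly as in the paper by cancelling the first two terms of \eqref{gricci} via equivariance of $\dv$ and $\s$-invariance of $V_+$, and killing the trace term because $[\,\cdot\,,\chi(s)]_-$ vanishes on $V_+$. Your extra detail (using $[a_+',\chi(s)]=-[\chi(s),a_+']+d_E\la a_+',\chi(s)\ra$ together with $V_+\subset\s_E^\perp$) just spells out the step the paper states in one line.
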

\begin{proof}
The vector field is certainly tangent to the subspace of all $\s$-invariant generalized metrics. To show that it's tangent to $\mathcal N_E$ we thus need to show that
\begin{equation}\label{gricvan}
\gric_{V_+,\dv}(a_+,\chi(s))=0
\end{equation}
for any admissible $V_+$.
As $\dv$ is $\s$-equivariant and $V_+$ is $\s$-invariant, we have 
$$\chi(s)\cdot\dv a_+=\dv[\chi(s),a_+]=\dv[\chi(s),a_+]_+$$
so the first two terms in \eqref{gricci} (with $b_-=\chi(s)$) cancel. The last term vanishes, as $[\cdot,\chi(s)]_-=0$ by $\s$-invariance of $V_+$.
\end{proof}

Now we can formulate the compatibility of the (generalized) Ricci flow with equivariant PL T-duality. As we observed above, every generalized pseudometric $V_+\in\mathcal N_E$ gives rise to a generalized pseudometric $V'_{+,\redu}\in\mathcal M_{E'_\redu}$; let $\varphi:\mathcal N_E\to\mathcal M_{E'_\redu}$ denote the map $V_+\mapsto  V'_{+,\redu}$.

\begin{thm}\label{thm:redric}
The vector field $\mathrm{GRic}^E_{\dv}$ on $\mathcal{N}_E$ pushes forward, via $\varphi$, to the vector field $\mathrm{GRic}^{E'_\redu}_{\dv'_{\mathrm{red}}}$ on $\mathcal M_{E'_\redu}$. 
\end{thm}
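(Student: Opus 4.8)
The plan is to reduce Theorem \ref{thm:redric} to a combination of the pull-back result (Theorem \ref{thm:PLric}) and the reduction result (Theorem \ref{thm:gricred}), so that no new computation is needed. First I would unwind what ``pushes forward via $\varphi$'' means pointwise: fix $V_+\in\mathcal N_E$, let $V_+'=\tau^*V_+\subset\tau^*E=E'$ be its CA-pull-back (admissible and $\s$-invariant, as noted in the Setup), and let $V'_{+,\redu}=\varphi(V_+)\subset E'_\redu$ be its reduction. The claim is the equality of tangent vectors
$$d\varphi\bigl(\gric^E_{V_+,\dv}\bigr)=\gric^{E'_\redu}_{V'_{+,\redu},\dv'_\redu}$$
in $T_{V'_{+,\redu}}\mathcal M_{E'_\redu}\cong\Gamma(V'_{+,\redu}{}^*\otimes V'_{+,\redu,-})$. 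Since a tangent vector is determined by pairing it against an arbitrary element of $\Gamma(\tilde a_+,\tilde b_-)$ with $\tilde a_+\in\Gamma(V'_{+,\redu})$ and $\tilde b_-\in\Gamma(V'_{+,\redu,-})$, it suffices to check the corresponding equality of functions on $M'_\redu$ after pairing.

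The key steps then are: (i) lift $\tilde a_+,\tilde b_-$ through the reduction isomorphisms to $\s$-invariant sections $a_+'\in\Gamma(\tau^*V_+)^\s$ and $b_-'\in\Gamma(\tau^*V_-\cap\s^\perp_{E'})^\s$ on $M'$; (ii) apply Theorem \ref{thm:gricred} (with $E$ there replaced by our $E'=\tau^*E$, which is $\s$-equivariant with $\chi'=\tau^*\chi$ and carries the equivariant divergence $\tau^*\dv$) to get
$$\gric^{E'}_{\tau^*V_+,\tau^*\dv}(a'_+,b'_-)=(\pi')^*\gric^{E'_\redu}_{V'_{+,\redu},\dv'_\redu}(\tilde a_+,\tilde b_-),$$
where $\pi':M'\to M'_\redu$ is the quotient map; (iii) observe that $a'_+,b'_-$ are, at least locally and up to sections pulled back from $M$, of the form $\tau^*a_+$, $\tau^*b_-$ with $a_+\in\Gamma(V_+)$, $b_-\in\Gamma(V_-)$, and apply Theorem \ref{thm:PLric} to rewrite the left-hand side as $\tau^*\gric^E_{V_+,\dv}(a_+,b_-)$; (iv) finally identify $\tau^*\gric^E_{V_+,\dv}(a_+,b_-)$, after passing to $\s$-invariants and descending, with $d\varphi(\gric^E_{V_+,\dv})$ paired against $\tilde a_+,\tilde b_-$, using that $\varphi$ is by construction ``$\tau^*$ followed by reduction'' and that both operations are compatible with $\gric$ by the two cited theorems. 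Assembling (ii)–(iv) gives the claimed push-forward identity.

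The main obstacle I expect is step (iii): a general $\s$-invariant section $b_-'$ of $\tau^*V_-\cap\s^\perp_{E'}$ on $M'$ need not literally be $\tau^*$ of a section of $V_-$ on $M$, so one cannot apply Theorem \ref{thm:PLric} verbatim. The fix is to note that $\gric$ is $C^\infty$-bilinear (as already proved), so it is enough to check the identity on a generating set of sections; near a point of $M'$ one can choose a local frame of $V_-$ on $M$, pull it back to a frame of $\tau^*V_-$ on $M'$, and then any invariant section is a $C^\infty(M')$-combination of these pulled-back frame sections. The function coefficients are handled by $C^\infty$-linearity, and after descending one is left exactly with the $\s$-invariant coefficients, which are pull-backs of functions on $M'_\redu$. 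An alternative, cleaner route that sidesteps this entirely is to simply repeat the one-line argument of Theorem \ref{thm:gricred}: all structures entering $\gric^{E'_\redu}_{V'_{+,\redu},\dv'_\redu}$ are induced from the structures on $E$ via ``pull back along $\tau$, then reduce by $\mathsf S$'', and $\gric$ is built from those structures in a way that commutes with both operations, so the identity is immediate. I would present the proof in this second, structural form, citing Theorems \ref{thm:PLric} and \ref{thm:gricred} together, and relegate the frame-chasing verification to a remark.

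\begin{proof}
All the data defining $\gric^{E'_\redu}_{V'_{+,\redu},\dv'_\redu}$ — the Courant bracket, the generalized pseudometric, and the divergence on $E'_\redu$ — are obtained from the corresponding data on $E$ by first taking the CA-pull-back along $\tau$ (Theorem \ref{thm:PLric}) and then performing the $\mathsf S$-reduction (Theorem \ref{thm:gricred}); the admissibility of $\tau^*V_+$ and the equivariance of $\tau^*\dv$ were noted in the Setup. Concretely, fix $V_+\in\mathcal N_E$ and write $V'_+=\tau^*V_+\subset E'$. For $\tilde a_+\in\Gamma(V'_{+,\redu})$ and $\tilde b_-\in\Gamma(V'_{+,\redu,-})$, lift them to $\s$-invariant sections $a'_+\in\Gamma(V'_+)^\s$ and $b'_-\in\Gamma(V'_-\cap\s^\perp_{E'})^\s$. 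By Theorem \ref{thm:gricred} applied to the $\s$-equivariant CA $E'$,
$$\gric^{E'}_{V'_+,\tau^*\dv}(a'_+,b'_-)=(\pi')^*\,\gric^{E'_\redu}_{V'_{+,\redu},\dv'_\redu}(\tilde a_+,\tilde b_-),$$
with $\pi':M'\to M'_\redu$ the quotient map. Since $\gric$ is $C^\infty(M')$-bilinear, it suffices to evaluate the left-hand side on sections coming from a local frame of $V_\pm$ pulled back along $\tau$; for such sections Theorem \ref{thm:PLric} gives $\gric^{E'}_{V'_+,\tau^*\dv}(\tau^*a_+,\tau^*b_-)=\tau^*\gric^E_{V_+,\dv}(a_+,b_-)$, and the $C^\infty(M')$-coefficients descend, being $\s$-invariant, to functions on $M'_\redu$. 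Combining the two displayed identities shows that, under the map $\varphi:\mathcal N_E\to\mathcal M_{E'_\redu}$, the value of $\gric^E_{V_+,\dv}$ at $V_+$ is carried to the value of $\gric^{E'_\redu}_{\dv'_\redu}$ at $\varphi(V_+)$. As this holds at every $V_+\in\mathcal N_E$, the vector field $\gric^E_\dv$ on $\mathcal N_E$ pushes forward via $\varphi$ to $\gric^{E'_\redu}_{\dv'_\redu}$.
\end{proof}
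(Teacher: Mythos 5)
Your proof is correct and follows essentially the same route as the paper, which simply observes that the statement is an immediate corollary of Theorems \ref{thm:PLric} and \ref{thm:gricred}; your version just spells out the pointwise pairing and the $C^\infty$-bilinearity/frame argument that the paper leaves implicit. Note that since Theorem \ref{thm:PLric} already gives the full tensor identity $\gric^{\tau^*E}_{\tau^*V_+,\tau^*\dv}=\tau^*\gric^E_{V_+,\dv}$, the concern in your step (iii) dissolves and the short structural form you end with is exactly the paper's argument.
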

\begin{proof}
This is an immediate corollary of Theorems \ref{thm:PLric} and \ref{thm:gricred}.
\end{proof}

\subsubsection{(Generalized) string background equations}
Finally, we have the following result stating that equivariant PL T-duality is compatible with the generalized string background equations.
\begin{thm} If an admissible $V_+\subset E$ and an equivariant $\dv$ satisfy the generalized string background equations then so do  $V'_{+,\redu}\subset E'_\redu$ and $\dv'_\redu$.
\end{thm}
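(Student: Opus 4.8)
The plan is to chain together the pullback-compatibility results (Theorems \ref{thm:PLric} and \ref{thm:PLR}) with the reduction-compatibility results (Theorems \ref{thm:gricred} and \ref{thm:Rred}); there is no genuinely new computation to do.

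First I would pass from $(V_+\subset E,\dv)$ on $M$ to the pulled-back data $(\tau^*V_+\subset E',\tau^*\dv)$ on $M'$, where $E'=\tau^*E$. By hypothesis $\gric^E_{V_+,\dv}=0$ and $\mathcal R^E_{V_+,\dv}=0$, so Theorem \ref{thm:PLric} gives $\gric^{E'}_{\tau^*V_+,\tau^*\dv}=\tau^*\gric^E_{V_+,\dv}=0$ and Theorem \ref{thm:PLR} gives $\mathcal R^{E'}_{\tau^*V_+,\tau^*\dv}=\tau^*\mathcal R^E_{V_+,\dv}=0$. As recorded in the discussion preceding the theorem, $E'$ is again $\s$-equivariant (with $\chi'=\tau^*\chi$), $\tau^*V_+$ is again admissible, and $\tau^*\dv$ is again equivariant, so the reduction setup of Theorems \ref{thm:gricred} and \ref{thm:Rred} applies verbatim to $E'$.

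Second I would descend by $\mathsf S$. Since the $\mathsf S$-action on $M'$ is free and proper, $E'_\redu\to M'_\redu$ is defined, $\tau^*V_+$ descends to $V'_{+,\redu}$ and $\tau^*\dv$ descends to $\dv'_\redu$ (the latter by Proposition \ref{divdescends}), and $\pi:M'\to M'_\redu$ is a surjective submersion, so $\pi^*:C^\infty(M'_\redu)\to C^\infty(M')$ is injective. Applying Theorem \ref{thm:gricred} to $E'$: for every $a_+\in\Gamma(\tau^*V_+)^\s$ and $b_-\in\Gamma(\tau^*V_-\cap\s^\perp_{E'})^\s$ we get $\pi^*\bigl(\gric^{E'_\redu}_{V'_{+,\redu},\dv'_\redu}(\tilde a_+,\tilde b_-)\bigr)=\gric^{E'}_{\tau^*V_+,\tau^*\dv}(a_+,b_-)=0$, hence $\gric^{E'_\redu}_{V'_{+,\redu},\dv'_\redu}(\tilde a_+,\tilde b_-)=0$; since under the identification \eqref{identif} every section of $\tilde V_+$, resp.\ $\tilde V_-$, arises as such a $\tilde a_+$, resp.\ $\tilde b_-$, this means $\gric^{E'_\redu}_{V'_{+,\redu},\dv'_\redu}=0$. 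Likewise Theorem \ref{thm:Rred} gives $\pi^*\mathcal R^{E'_\redu}_{V'_{+,\redu},\dv'_\redu}=\mathcal R^{E'}_{\tau^*V_+,\tau^*\dv}=0$, hence $\mathcal R^{E'_\redu}_{V'_{+,\redu},\dv'_\redu}=0$. Together these are exactly the generalized string background equations for $V'_{+,\redu}\subset E'_\redu$ and $\dv'_\redu$.

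There is essentially no analytic obstacle: the statement is a formal corollary of the two compatibility mechanisms. The only points needing care are bookkeeping — confirming that the induced $\s$-equivariant structure, admissibility of $\tau^*V_+$, and equivariance of $\tau^*\dv$ are genuinely what make Theorems \ref{thm:gricred} and \ref{thm:Rred} applicable to $E'$ (and not merely to $E$), and that the reduced sections $\tilde a_+,\tilde b_-$ obtained from $\s$-invariant representatives do exhaust $\Gamma(\tilde V_+)$ and $\Gamma(\tilde V_-)$, so that pointwise vanishing of $\gric$ on representatives forces vanishing of the reduced tensor. Both of these are already in place in the text preceding the statement, so the proof itself is short.
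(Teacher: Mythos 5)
Your proof is correct and follows essentially the same route as the paper: the paper cites Theorem \ref{thm:redric} (itself just Theorems \ref{thm:PLric} and \ref{thm:gricred} combined) for the vanishing of $\gric$ and Theorems \ref{thm:PLR} and \ref{thm:Rred} for the vanishing of $\mathcal R$, which is exactly your pullback-then-reduce chain. The extra bookkeeping you spell out (equivariance/admissibility of the pulled-back data, surjectivity onto sections of the reduced bundles) is sound and already implicit in the paper's setup.
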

\begin{proof}
This follows immediately from the previous Theorem (for $\gric=0$) and from Theorems \ref{thm:PLR} and \ref{thm:Rred} (for $\mathcal R=0$).
\end{proof}

\begin{rem}
In \cite{KS3}, in place of the subbundle $V_+$ used here, a different subbundle $W_+\subset E$ was used; it relates to our $V_+$ via $W_+=V_+ +\s_E$ and is characterized by the property $\s_E=\ker\la,\ra|_{W_+}\subset W_+$ and by being $\s$-invariant. While from the point of view of the $\sigma$-model $W_+$ is more natural than $V_+$, we were not able to prove compatibility of the equivariant PL T-duality with the Ricci flow and with string background equations using only $W_+$. Whether it's possible remains an open problem and needs new ideas ($W_+\subset E$ is not a generalized pseudometric).
\end{rem}

\section{Differential graded symplectic manifolds, spinor bundles and Dirac generating operators}\label{dgsection}

In this section we recall Dirac generating operators of CAs and motivate them using the graded symplectic interpretations of CAs. Generating Dirac operators were introduced in \cite{AX}; here we follow the approach of \cite[letter 6]{let}, where the canonical generating operator was first constructed.

(For a reader feeling that it's taking us off the main road it's enough to look at the formulas \eqref{gendir} and \eqref{gendirdiv} for the Dirac operators and browse through \S\ref{spinorbundle} and \S\ref{sec:spinred} to understand the notation.)

To avoid possible confusion, the Courant bracket in a CA $E$ will be denoted in this section by $[,]_E$, as $[,]$ will be the (graded) commutator in a (graded) associative algebra.

\subsection{CAs as dg symplectic manifolds}\label{sec:dg}
An alternative viewpoint at CAs is provided by graded geometry \cite{DR}. Namely, a vector bundle $E$ with a non-degenerate symmetric pairing $\la,\ra$ is equivalent to a non-negatively graded manifold $\mathcal E$ with a symplectic form $\omega$ of degree 2, and a CA structure on $E$ is equivalent to a degree 3 function $\Theta$ on $\mathcal E$ satisfying the master equation
$$\{\Theta,\Theta\}=0.$$

The correspondence is as follows.
If $\mathcal{E}$ is such a graded manifold then the vector bundle $E\to M$ is given by
\[\Gamma(E)=C^\infty(\mathcal{E})^1,\quad C^\infty(M)=C^\infty(\mathcal{E})^0\]
with the pairing 
$$\langle u,v\rangle =\{u,v\},$$
and the CA structure is
\begin{align*}
[u,v]_E&=\{\{\Theta, u\},v\}\\
\rho(u)f&=\{\{\Theta,u\},f\},
\end{align*}
where $u,v\in\Gamma(V)$ and $f\in C^\infty(M)$. We also have $d_Ef=\{\Theta,f\}$.

If we (locally) trivialize the bundle $E\to M$ to $V\times M$ for some vector space $V$ with a pairing $\la,\ra$, we get $\mathcal E=V[1]\times T^*[2]M$, with the standard symplectic structure on $T^*[2]M$ and with the constant symplectic structure on $V[1]$ given by $\la,\ra$.
Using this trivialization and a basis $e_\alpha$ of $V\cong V^*$ we then have
\[\Theta=e^\alpha\rho(e_\alpha)-\frac{1}{6}e^\alpha e^\beta e^\gamma c_{\alpha\beta\gamma}
\in{\textstyle\bigwedge}V\otimes C^\infty(T^*[2]M)=C^\infty(\mathcal E)\]
where  $c_{\alpha\beta\gamma}=\langle [e_\alpha,e_\beta]_E,e_\gamma\rangle$.

\subsection{Differential operators with Clifford coefficients}

If $V$ is a finite-dimensional real vector space with a non-degenerate symmetric pairing $\la,\ra$, let $\on{Cl}(V)$ be the corresponding Clifford algebra, i.e.\ the algebra generated by $V$ modulo the relations
$$uv+vu=\la u,v\ra.$$

Let us recall some basic properties of the algebra $\on{Cl}(V)$. It is $\Z_2$-graded and equipped with an increasing filtration; its associated graded is $\bigwedge V$. The complex version $\on{CL}(V)\otimes\C$ is equipped with a ``$\Z_2$-graded involution'' 
$$\alpha\mapsto\alpha^T,\quad (\alpha^T)^T=(-1)^{|\alpha|}\alpha,\quad(\alpha\beta)^T=(-1)^{|\alpha||\beta|}\beta^T\alpha^T,$$
determined by $v^T=iv$ ($\forall v\in V$).

If $M$ is a manifold and $t\in\R$, let $D_t(M)$ be the algebra of differential operators  acting on $t$-densities on $M$. This algebra is naturally filtered by the orders of differential operators. On $D_{1/2}(M)$ we have a natural involution $D\mapsto D^T$ where $D^T$ is the formally adjoint operator (i.e.\ $\int_M\sigma_1 D\sigma_2=\int_M \sigma_2 D^T\sigma_1$ for any compactly supported half-densities $\sigma_1$, $\sigma_2$).

Let us now combine Clifford algebras and differential operators to get a (filtered) deformation quantization of the graded Poisson algebra $C^\infty(\mathcal E)$.
 If $E\to M$ is a vector bundle with a non-degenerate symmetric pairing $\la,\ra$ and  $t\in\R$, we shall define an algebra $\mathcal A_t(E)$  containing $\Gamma(\on{Cl}(E))$.

If we have a trivialization $E\cong V\times M$, where $V$ is a vector space with a pairing $\la,\ra$, we set 
$$\mathcal A_t(E)=\on{Cl}(V)\otimes D_t(M).$$
If we change the trivialization by a gauge transformation $g:M\to O(V)$, we (locally) lift $g$ to $\tilde g:M\to\on{Pin}(V)\subset\on{Cl}(V)$, and act on $\mathcal A_t(E)$ with $Ad_{\tilde g}$. This allows us to define $\mathcal A_t(E)$ for non-trivial bundles $E$, by gluing it from local $\on{Cl}(V)\otimes D_t(U_i)$'s using the transitions $g_{ij}:U_i\cap U_j\to O(V)$.

\begin{rem}
An invariant definition of $\mathcal A_t(E)$ is as follows. Let $P\to M$ be the principal $O(V)$-bundle associated to $E$ (the points of $P$ are $\la,\ra$-preserving linear bijections $V\to E_x$, $x\in M$). Using the standard inclusion of $\mathfrak{so}(V)$ to $\on{Cl}(V)$ and to the vector fields on $P$ we get the inclusion
$$\mathfrak{so}(V)\to \on{Cl}(V)\otimes D_t(P),\quad X\mapsto  X\otimes 1+1\otimes \mathcal L_X.$$
We then have
$$\mathcal A_t(E)=\bigl(\on{Cl}(V)\otimes D_t(P)/\la \mathfrak{so}(V)\ra\bigr)^{O(V)}$$
where $\la \mathfrak{so}(V)\ra\subset \on{Cl}(V)\otimes D_t(P)$ is the ideal generated by $\mathfrak{so}(V)$.

This construction is a quantum version of a construction of $\mathcal E$ as a symplectic reduction, namely $\mathcal E=\bigl(T^*[2]P\times V[1]\bigr)/\!/O(V)$ (due to A.~Weinstein).
\end{rem}

The algebra $\mathcal A_t(E)$ is $\Z_2$-graded, as $\on{Cl}(V)$ is $\Z_2$-graded.
It comes with a natural increasing filtration, given (using a local trivialization $E\cong V\times M$) by the filtration degree in $\on{Cl}(V)$ plus \emph{twice} the filtration degree (i.e.\ order) in $D_t(M)$ (this combination is invariant under the transitions $Ad_{\tilde g}$), i.e.
$$F^p\mathcal A_t(E)=\sum_{r+2s=p}F^r\on{Cl}(V)\otimes F^sD_t(M).$$
In particular 
$$F^0\mathcal A_t(E)=C^\infty(M),\quad F^1\mathcal A_t(E)^{odd}=\Gamma(E).$$
 The associated graded Poisson algebra is $C^\infty(\mathcal E)$ where $\mathcal E$ is the graded symplectic manifold corresponding to $(E,\la,\ra)$ (see \S\ref{sec:dg})

Finally, the involutions on $\on{Cl}(V)\otimes\C$ and on $D_{1/2}(M)$ give us a $\Z_2$-graded involution on $\mathcal A_{1/2}(E)\otimes\C$. This $\Z_2$-graded involution induces on
 $$C^\infty(\mathcal E)\otimes\C=\on{Gr}\mathcal A_{1/2}(E)\otimes\C$$
  the involution $\alpha^T= i^{|\alpha|}\alpha$.

For $X\in F^p\mathcal A_{1/2}(E)$ we shall call its image in 
$$C^\infty(\mathcal E)^p=F^p\mathcal A_{1/2}(E)/F^{p-1}\mathcal A_{1/2}(E)$$
 the \emph{principal symbol} of $X$. As the principal symbol $\on{symb}_p X$ of $X$ is of parity $p$ mod 2 and it satisfies $(\on{symb}_p X)^T=i^p\on{symb}_p X$, we have $\on{symb}_p X=0$ if $X$ is of the opposite parity or if $X^T=-i^p X$.

\subsection{Generating Dirac operator}

Let $E\to M$ be a CA, and $\Theta\in C^\infty(\mathcal E)$ the corresponding degree 3 function. Let us see whether there is a canonical lift of $\Theta$ to $\mathcal A_t(E)\otimes\C$.

For $t=1/2$ there is such a canonical lift:

\begin{prop}\label{prop:Dcan}
There is a unique odd element $\mathcal D\in F^3\mathcal A_{1/2}(E)$ satisfying 
$$\mathcal D^T=-i\mathcal D$$
 such that its principal symbol 
  is $\Theta$. Moreover $\mathcal D^2\in C^\infty(M)\subset\mathcal A_{1/2}(E)$.
\end{prop}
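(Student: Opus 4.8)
The plan is to construct $\mathcal D$ term by term along the filtration and to use the two constraints — the parity and the reality/adjointness condition $\mathcal D^T=-i\mathcal D$ — to pin it down uniquely. First I would fix a local trivialization $E\cong V\times M$ together with an $\la,\ra$-orthonormal frame $e_\alpha$, so that $\mathcal A_{1/2}(E)=\on{Cl}(V)\otimes D_{1/2}(M)$ and $\Theta=e^\alpha\rho(e_\alpha)-\tfrac16 e^\alpha e^\beta e^\gamma c_{\alpha\beta\gamma}$ (cf.\ \S\ref{sec:dg}). The obvious naive lift is $\mathcal D_0:=e^\alpha\mathcal L_{\rho(e_\alpha)}-\tfrac16 c_{\alpha\beta\gamma}e^\alpha e^\beta e^\gamma$, an odd element of $F^3$ whose principal symbol is $\Theta$. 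Any other odd lift with the correct symbol differs from $\mathcal D_0$ by an element of $F^1\mathcal A_{1/2}(E)^{odd}=\Gamma(E)$, i.e.\ by a section $e\in\Gamma(E)$; so I must show there is a unique choice of this ambiguity making $\mathcal D:=\mathcal D_0+e$ satisfy $\mathcal D^T=-i\mathcal D$. Concretely, the adjoint of $e^\alpha\in\on{Cl}(V)\otimes\C$ is $(e^\alpha)^T=i e^\alpha$ and of a vector field $\mathcal L_X$ acting on half-densities is $-\mathcal L_X-\dv_{?}$-type first-order term, so computing $\mathcal D_0^T$ produces $-i\mathcal D_0$ up to a correction term linear in the $c$'s and in $\rho$; this correction is exactly of the form $i\cdot(\text{section of }E)$, and subtracting half of it fixes $e$ uniquely. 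Uniqueness then follows because $\Gamma(E)$ contains no nonzero element fixed by $X\mapsto -i^1 X^{-T}$ in the appropriate sense — more precisely, the map $e\mapsto e+i e^T$ is injective on $\Gamma(E)\otimes\C$ because $e^T=ie$ would force $e=0$.

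Next I would check that this local construction is independent of the trivialization, hence glues to a global $\mathcal D$. Under a gauge transformation $g:U\to O(V)$ lifted to $\tilde g:U\to\pin(V)$, the algebra is acted on by $Ad_{\tilde g}$; since $Ad_{\tilde g}$ preserves the filtration, the parity, the involution (the involution is $Ad$-equivariant because $\pin(V)$ is preserved by $\alpha\mapsto\alpha^T$ up to sign), and sends the principal symbol $\Theta$ to itself (as $\Theta$ is intrinsic), the uniqueness just established forces $Ad_{\tilde g}\mathcal D=\mathcal D$ on overlaps. Hence the locally defined operators agree and define a global odd element $\mathcal D\in F^3\mathcal A_{1/2}(E)$ with $\mathcal D^T=-i\mathcal D$ and principal symbol $\Theta$, and uniqueness is inherited from the local statement.

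For the final assertion $\mathcal D^2\in C^\infty(M)$: observe that $\mathcal D^2$ is \emph{even}, lies in $F^6\mathcal A_{1/2}(E)$, and is self-adjoint up to the expected sign, $(\mathcal D^2)^T=(\mathcal D^T)^2=(-i\mathcal D)^2=-\mathcal D^2$; but a principal symbol in $C^\infty(\mathcal E)^p$ must satisfy $(\on{symb}_p)^T=i^p\on{symb}_p$, so for $p=6$ one needs $\on{symb}_6(\mathcal D^2)=i^6\on{symb}_6(\mathcal D^2)=-\on{symb}_6(\mathcal D^2)$, consistent, and I instead use the master equation: the principal symbol of $\mathcal D^2$ in degree $6$ is $\tfrac12\{\Theta,\Theta\}=0$, so $\mathcal D^2\in F^5$; being even forces $\mathcal D^2\in F^4$. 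Iterating — each successive principal symbol $\on{symb}_p(\mathcal D^2)$ for $p=5,4,3,2,1$ is killed either by the parity constraint (odd $p$) or by the adjointness constraint $(\mathcal D^2)^T=-\mathcal D^2$ combined with $(\on{symb}_p)^T=i^p\on{symb}_p$ (even $p=2$, where one needs $\on{symb}_2=-\on{symb}_2$) — one descends all the way to $\mathcal D^2\in F^0\mathcal A_{1/2}(E)=C^\infty(M)$. The step I expect to be the genuine obstacle is the careful bookkeeping in the very first paragraph: correctly computing $\mathcal D_0^T$, tracking all the first-order and zeroth-order terms produced by commuting the formally adjoint vector fields past the Clifford generators and by the Clifford reordering of $c_{\alpha\beta\gamma}e^\alpha e^\beta e^\gamma$, and verifying that the total discrepancy $\mathcal D_0^T+i\mathcal D_0$ really is $2i$ times a section of $E$ (and not something of higher filtration), so that the correction $e$ exists; the degree-counting arguments in the other two paragraphs are then essentially formal.
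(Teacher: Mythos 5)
The decisive flaw is the sign of the involution on $\mathcal D^2$. Since $(\,\cdot\,)^T$ is a \emph{graded} anti-automorphism, $(\mathcal D^2)^T=(-1)^{|\mathcal D||\mathcal D|}(\mathcal D^T)^2=-(-i\mathcal D)^2=+\mathcal D^2$, not $-\mathcal D^2$ as you write. This changes which symbols your descent actually kills. At $p=6$ the symbol of $\mathcal D^2$ is $\Theta^2$, which vanishes for free ($\Theta$ is odd in the supercommutative algebra $C^\infty(\mathcal E)$), not by the master equation. The master equation is needed at $p=4$: with the correct sign, $X^T=-i^4X$ means $X^T=-X$, which $\mathcal D^2$ does \emph{not} satisfy, so the involution does not kill $\on{symb}_4$; what kills it is $2\mathcal D^2=[\mathcal D,\mathcal D]$, the fact that the graded commutator maps $F^3\times F^3$ into $F^4$, and $\on{symb}_4[\mathcal D,\mathcal D]=\{\Theta,\Theta\}=0$. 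Finally, at $p=2$ the correct sign is exactly what you need ($X^T=X=-i^2X$ forces $\on{symb}_2=0$), whereas with your sign the constraint reads $-\on{symb}_2=-\on{symb}_2$ and is vacuous: as written, nothing in your argument excludes a first-order differential-operator part of $\mathcal D^2$, so the chain from $F^6$ down to $F^0$ breaks at $p=2$ (and the $p=4$ step rests on the wrong identity). The paper's route is shorter and worth comparing: write $\mathcal D^2=\tfrac12[\mathcal D,\mathcal D]\in F^4$, use $\{\Theta,\Theta\}=0$ and parity to land in $F^2$, then $(\mathcal D^2)^T=\mathcal D^2$ kills $p=2$ and parity kills $p=1$.

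In the first part, your uniqueness argument is garbled: for $e\in\Gamma(E)\otimes\C$ one has $e^T=ie$ \emph{identically}, so the map $e\mapsto e+ie^T$ that you claim is injective is in fact identically zero. The correct statement is that $(\mathcal D_0+e)^T+i(\mathcal D_0+e)=(\mathcal D_0^T+i\mathcal D_0)+2ie$, so the correction $e$ is determined uniquely; equivalently, the difference of two solutions satisfies both $e^T=ie$ and $e^T=-ie$, hence vanishes. Also, the step you flag as the ``genuine obstacle'' (checking that $\mathcal D_0^T+i\mathcal D_0$ is a section of $E$) requires no local bookkeeping at all: it is an odd element of $F^3\mathcal A_{1/2}(E)\otimes\C$ whose degree-3 symbol is $i^3\Theta+i\Theta=0$, so it lies in $F^2$, and the odd part of $F^2$ is $\Gamma(E)\otimes\C$. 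This symbol argument is essentially the paper's whole existence-and-uniqueness proof, and since it never uses a trivialization it also makes your gluing paragraph unnecessary.
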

\begin{proof}
An element of $F^3\mathcal A_{1/2}(E)^{odd}$ is determined by its principal symbol up an  element $e\in F^1\mathcal A_{1/2}(E)^{odd}=\Gamma(E)$. As $e^T=i e$, demanding $\mathcal D^T=-i\mathcal D$ fixes $\mathcal D$ completely.

We have $\mathcal D^2=[\mathcal D,\mathcal D]/2\in F^2\mathcal A_{1/2}(E)^{even}$, and $(\mathcal D^2)^T=-(\mathcal D^T)^2=\mathcal D^2$, so $\mathcal D^2\in F^0\mathcal A_{1/2}(E)=C^\infty(M)$.
\end{proof}

The element $\mathcal D$ is called \emph{the generating Dirac operator}.
Using a (local) trivialization $E=V\times M$ we have
\begin{equation}\label{gendir}
\mathcal D = e^\alpha\mathcal L_{\rho(e_\alpha)} - \frac{1}{6}e^\alpha e^\beta e^\gamma c_{\alpha\beta\gamma}\in\on{Cl}(V)\otimes D_{1/2}(M).
\end{equation}

\begin{rem}\label{rem:D^2}
A quick calculation (where we can discard all terms satisfying $X^T=-X$ appearing in $\mathcal D^2$ as they have to cancel) gives
$$\mathcal D^2=\frac{1}{2}\mathcal L_{\rho(e_\alpha)}\mathcal L_{\rho(e^\alpha)}-\frac{1}{48}c_{\alpha\beta\gamma}c^{\alpha\beta\gamma}$$
i.e.\ $\mathcal D^2=\frac{1}{8}\Delta_E$. The function $\mathcal D^2$ vanishes for exact CAs. For a quadratic Lie algebra it is $-\frac{1}{48}c_{\alpha\beta\gamma}c^{\alpha\beta\gamma}$. Any transitive CA is locally $TM\oplus T^*M\oplus\g$ for a quadratic Lie algebra $\g$, and the function is again the constant $-\frac{1}{48}c_{\alpha\beta\gamma}c^{\alpha\beta\gamma}$.
\end{rem}

There is no canonical lift of $\Theta$ to $\mathcal A_0(E)$. However, given a non-vanishing half-density $\sigma$, we can identify $C^\infty(M)$ with half-densities via multiplication by $\sigma$, and thus also $D_0(M)$ with $D_{1/2}(M)$ and $\mathcal A_0(E)$ with $\mathcal A_{1/2}(E)$. Under this identification $\mathcal D\in \mathcal A_{1/2}(E)$ is sent to
$$D_\sigma:= e^\alpha \Bigl(\rho(e_\alpha)+\frac{1}{2}\dv_{\sigma^2} e_\alpha\Bigr) - \frac{1}{6}e^\alpha e^\beta e^\gamma c_{\alpha\beta\gamma}.$$
If $\dv$ is an arbitrary divergence on $E$ and $\sigma$ a non-vanishing half-density, and if $\dv=\dv_{\sigma^2}+\la e,\cdot\ra$, we set \cite{AX,MGF} 
$$D_{\dv}:=D_\sigma+e/2.$$
Using a local trivialization we then get
\begin{equation}\label{gendirdiv}
D_{\dv}= e^\alpha \Bigl(\rho(e_\alpha)+\frac{1}{2}\dv e_\alpha\Bigr) - \frac{1}{6}e^\alpha e^\beta e^\gamma c_{\alpha\beta\gamma}\in\on{Cl}(V)\otimes D_{0}(M)
\end{equation}
which also proves the independence of $D_{\dv}$ on the choice of $\sigma$.

\begin{rem}
We have $D_{\dv}^2=(D_\sigma+e/2)^2=D_\sigma^2+[D_\sigma,e]/2+\la e,e\ra/8$. This expression is in $C^\infty(M)$ iff $[D_\sigma,e]\in C^\infty(M)$, and that happens iff the derivation $[e,\cdot]_E$ of $E$ vanishes.
\end{rem}

\begin{example}
For the exact CAs of the type $\g\times\mathsf G/\mathsf H$, with the usual divergence $\dv u=0$ ($\forall u\in\g$) we have $D_{\dv}=e^\alpha\rho(e_\alpha)-\frac{1}{6}e^\alpha e^\beta e^\gamma c_{\alpha\beta\gamma}\in\on{Cl}(\g)\otimes D_{0}(\mathsf G/\mathsf H)$, which then gives
$D_{\dv}^2=\frac{1}{2}\rho(e_\alpha)\rho(e^\alpha)-\frac{1}{48}c_{\alpha\beta\gamma}c^{\alpha\beta\gamma}$. Let us recall that the differential operator $\rho(e_\alpha)\rho(e^\alpha)$ is actually a vector field.

On the other hand, as $\g\times\mathsf G/\mathsf H$ is exact, we have $\mathcal D^2=0$. If $\h$ is unimodular (so that $D_{\dv}=D_\sigma$ for an invariant half-density $\sigma$) we thus have $D_{\dv}^2=0$. As a result, if $c_{\alpha\beta\gamma}c^{\alpha\beta\gamma}\neq0$ then $\g$ contains no unimodular Lagrangian Lie subalgebras.
\end{example}

\subsection{Spinors}\label{spinorbundle}
 Let us suppose that $E$ admits a spin structure and let $S_E$ be the spinor bundle of modules of $\cl(E)$ (i.e.\ we suppose that there is a lift of the associated principal $O(V)$-bundle $P\to M$ to a $Pin(V)$-bundle $\tilde P\to M$ and $S_E$ is its associated bundle given by the spinor space $S_V$). The algebra $\mathcal A_0(E)$ then acts on $\Gamma(S_E)$ by differential operators. Likewise, $\mathcal A_{1/2}(E)$ acts on $\Gamma(\mathcal S_E)$ where 
$$\mathcal S_E:=S_E\otimes|{\textstyle\bigwedge}^{top}T^*M|^{1/2}$$
 is the bundle of spinor half-densities.

 We now recall a construction of the $\cl(V)$-module $S_V$. For simplicity we suppose that $\dim V$ is even and also that $\la,\ra$ has symmetric signature (the latter assumption can be avoided by a complexification).

Choosing a lagrangian splitting of $V$, i.e. $V=L_1\oplus L_2$, we set 
\begin{equation}\label{cliff}
S_V:={\textstyle\bigwedge} L_1\otimes ({\textstyle\bigwedge}^{\mathrm{top}}L_1)^{-1/2}.
\end{equation}
The action of $\cl(V)$ on $S_V$ is generated by the action of $L_1$ by multiplication and the action of $L_2\cong L_1^*$ by contraction (we act on the first factor in $S_V$). 

\begin{example}\label{ex:spinex}
If $E=TM\oplus T^*M$ is an exact CA given by a closed 3-form $H$ then we can choose $L_1=T^*M$ and $L_2=TM$ and get $S_E=\bigwedge T^*M\otimes (\bigwedge^{\mathrm{top}}T^*M)^{-1/2}$ and so $\mathcal S_E=\bigwedge T^*M$. In this representation we get $\mathcal D=d+H\wedge$.
\end{example}

Furthemore, we define the operator $\vartheta\colon \bw L_1 \to \bw L_1\otimes\C$ via $a\mapsto i^{|a|} a$ for $a$ a homogeneous element. 

We can now construct a natural pairing $(\cdot,\cdot): S_V\otimes S_V\to\C$, which, using the identification (\ref{cliff}), is defined as 
\[(a\otimes \lambda,b\otimes\lambda):=\langle(\vartheta a\wedge b)^{\mathrm{top}},\lambda^2\rangle,\]
for $a,b\in\bw L_1$ and $\lambda\in (\bw^{\mathrm{top}}L_1)^{-1/2}$. Here we have only kept the top part of the wedge product and then contracted it with $\lambda^2\in(\bw^{\mathrm{top}} L_1)^*$. 
One can easily check that the pairing is compatible with the action of $\cl(V)$, i.e.
\begin{equation}\label{mukai}
(u A,B)=(-1)^{|u||A|}(A,u^T B)\quad \forall A,B\in S_V, u\in \cl(V)
\end{equation}
and that 
\[(A,B)=i^{(2|A|-1)\on{dim}L_1}(B,A).\]

Finally, let $E$ be a CA and let 
 $V_+\subset E$ be a generalized pseudometric with a chosen orientation. Let $e_a$, $a\in\{1,\dots n\}$,  be a local oriented orthonormal basis of $V_+$ ($\la e_a,e_b\ra=\pm\delta_{ab}$). We define 
 $$R_{V_+}=2^{n/2}e_1\dots e_n\in\pin(E)\subset\cl(E).$$
  This is independent of the choice of basis.  $R_{V_+}$ is a lift of the reflection w.r.t. $V_+$ multiplied by $(-1)^{n+1}$ from $O(E)$ to $\pin(E)$. It is easy to check that
\[R^2_{V_+}=(-1)^{\left[\textstyle{\frac{n+1}{2}}\right]+q},\]
where $[\cdot]$ denotes the integer part and $q$ is the number of negative eigenvalues of the inner product on $V_+$. If $R^2_{V_+}=1$, we define \emph{self-dual} spinors as the sections $F\in\Gamma(S_E)$ satisfying $R_{V_+}F=F$.

\subsection{Spinors and reduction of CAs}\label{sec:spinred}

In this sections we shall summarize how spinors and spinor half-densities behave under reduction of CAs.

If $V$ is a vector space with a non-degenerate symmetric pairing and $J\subset V$ an isotropic subspace then we have a natural isomorphism of spinor spaces
\begin{equation}\label{spinredpt}
S_{J^\perp/J}\otimes(\bw^{top} J)^{1/2}\cong (S_V)^{J\epsilon}\subset S_V.
\end{equation}
where
$$(S_V)^{J\epsilon}:= \{A\in S_V\mid  (\forall u\in J)\, uA=0\}.$$

Let now $E$ be an $\s$-equivariant CA and let $C\s=\s\oplus\s\epsilon$ with $\deg\epsilon=-1$ and $\epsilon^2=0$ be the cone of $\s$ ($C\s$ is a graded Lie algebra, which also has a differential $d/d\epsilon$). We have a natural action of $C\s$ on $S_E$: $\s$ acts on $E$ via $[\chi(s),\cdot]_E$ and thus also on the associated bundle $S_E$, and $\s[1]=\s\epsilon$ acts via $(s\epsilon)\cdot A:=\chi(s) A$. 

If the resulting action of $\mathsf S$ is free and proper, we get from \eqref{spinredpt} (with $V=E$ and $J=\s_E$) the natural isomorphism
\begin{equation}\label{spinred}
\Gamma(S_{E_{/\mathsf S}})\cong\bigl(\Gamma(S_E)\otimes(\bw^\text{top}\s)^{-1/2}\bigr)^{C\s}
\end{equation}
where $\s\subset C\s$ acts on the line $(\bw^\text{top}\s)^{-1/2}$ naturally (via $-\on{Tr}_\s ad_s/2$) and $\s\epsilon$ trivially. A quick inspection shows that if $\dv$ is an equivariant divergence on $E$ and $\widetilde\dv$ the resulting divergence on $E_{/\mathsf S}$ then the isomorphism \eqref{spinred} is compatible with the actions of $D_{\dv}$ and $D_{\widetilde\dv}$.

Pointwise in $M$ the identification \eqref{spinredpt} is
$$S_{\s_E^\perp/\s_E}\cong (S_E)^{\s\epsilon}\otimes (\bw^\text{top}\s)^{-1/2}.$$
The pairing $(\cdot,\cdot)$ on $S_{\s_E^\perp/\s_E}$ transfered under this identification to $(S_E)^{\s\epsilon}\otimes (\bw^\text{top}\s)^{-1/2}$ will be denoted by
$$(\cdot,\cdot)_\s.$$

For spinorial half-densities the situation is somewhat simpler - from \eqref{spinred} we get a natural isomorphism
$$\Gamma(\mathcal S_{E_{/\mathsf S}})\cong\Gamma(\mathcal S_E)^{Cs}$$
compatible with the action of $\mathcal D$, and we get a pairing $(\cdot,\cdot)_\s$ on $(\mathcal S_E)^{s\epsilon}$ with values in $\bw^{top}\s$-valued densities.

\section{Poisson-Lie T-duality and type II SUGRAs}\label{sec:SUGRA}
\subsection{Type II SUGRAs and exact CAs}

The bosonic field content of the type II SUGRAs is 
\[(g, H, \phi, \mathcal{F}),\]
where $g$ is a Lorentzian metric, $H$ is a closed 3-form, $\phi$ is a function (scalar field) called the dilaton and $\mathcal{F}$ (a collection of Ramond-Ramond fields) is an inhomogeneous differential form of even or odd degree satisfying $(d+H)\mathcal F=0$ and a certain self-duality condition. Following \cite{CSCW}, this data is equivalent to $(V_+\subset E,\sigma,\mathcal F)$, where $E\to M$ is an exact CA ($\dim M=10$), $V_+$ is a generalized pseudometric complementary to $T^*M\subset E$, and $\mathcal F\in\Gamma(\mathcal S_E)$ is an even or odd spinor half-density such that $\mathcal D\mathcal F=0$ and $R_{V_+}\mathcal F=\mathcal F$.

Indeed, $V_+\subset E$ is equivalent to $(g,H)$, $\sigma$ is related to $\phi$ via $\sigma=e^{-\phi}\mu_g^{1/2}$ (cf.\ Example \ref{ex:SEF}), and $\Gamma(\mathcal S_E)=\Omega(M)$ and $\mathcal D=d+H$ as in Example \ref{ex:spinex}.

The type II SUGRA equations (or string background equation for a type II superstring) in this formalism are
\begin{subequations}\label{eqdil}
\begin{align}
\gric_{V_+,\dv}(u_+,v_-)\,\sigma^2&=\textstyle{\frac{i}{8\nu}}(u_+ \mathcal F,v_- \mathcal F)\quad \forall u_+\in V_+,v_-\in V_-\\
\Delta_{V_+}\sigma&=0.
\end{align}
\end{subequations}
In the type IIA case $\mathcal F$ is even and $\nu=1$, in the type IIB case $\mathcal F$ is odd and $\nu=i$,

The corresponding ``pseudoaction'' functional is
$$-\frac{1}{2}\int_M\sigma\Delta_{V_+}\sigma-\textstyle{\frac{\nu}{8}}(\mathcal F,R_{V_+}\mathcal F)$$
(where ``pseudo'' corresponds to the fact that the self-duality condition $R_{V_+}\mathcal F=\mathcal F$ has to be imposed manually).

In \emph{modified type II SUGRA} \cite{TW,AFHRT} the dilaton $\phi$ is replaced by a pair $(X,\alpha)$ corresponding to a divergence $\dv$ on $E$ compatible with $V_+$ and $\mathcal F\in\Gamma(\mathcal S_E)$ is replaced by $F\in\Gamma(S_E)$ satisfying $D_{\dv}F=0$ and $R_{V_+}F=F$. The SUGRA equations become
\begin{subequations}\label{eq}
\begin{align}
\gric_{V_+,\dv}(u_+,v_-)&=\textstyle{\frac{i}{8\nu}}(u_+ F,v_- F)\quad \forall u_+\in V_+,v_-\in V_-\\
\mathcal{R}_{V_+,\dv}&=0.\label{eq2}
\end{align}
\end{subequations}

If $\dv=\dv_{\sigma^2}$ (i.e.\ when the dilaton exists) the two systems are linked via
$$\mathcal F=F\,\sigma.$$

	We shall study the systems \eqref{eqdil} end \eqref{eq} for arbitrary (non-exact) CAs and call them \emph{generalized SUGRA equations}. The motivation is to show that PL T-duality is compatible with type II SUGRA, and to actually construct solutions of type II SUGRA. 

To stay close to physics we shall always suppose that the signature of $\la,\ra|_{V_+}$ is Lorentzian and that $\on{rank}V_+\equiv 2$ mod 4 (which includes the physical case of $\on{rank}V_+=10$). In particular, this implies $R_{V_+}^2=1$ and the skew-symmetry of the spinor pairing $(\cdot,\cdot)$.

\subsection{Poisson-Lie T-duality of type II SUGRAs}
We now apply the main idea of PL T-duality, i.e.\ CA-pullbacks, once again:

\begin{thm}
Let $E\to M$ be a CA, $\tau:M'\to M$ a smooth map, and $E'=\tau^*E$ a CA-pullback. If	 $(V_+\subset E,\dv,F)$ is a solution of the generalized type II SUGRA equations, then $(\tau^*V_+\subset E',\tau^*\dv,\tau^*F)$ is also a solution.
\end{thm}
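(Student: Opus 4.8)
The plan is to reduce the statement to the two transfer lemmas already available, Theorems~\ref{thm:PLric} and~\ref{thm:PLR}, together with the observation that the entire spinorial apparatus of a CA is built fibrewise and via local trivializations, and therefore commutes with CA-pullbacks. Concretely: since $\tau^*E$ has the same fibres and the same fibrewise pairing as $E$, a spin structure on $E$ pulls back to one on $E'=\tau^*E$, and $S_{E'}\cong\tau^*S_E$ as bundles of $\cl$-modules equipped with the spinor pairing $(\cdot,\cdot)$. Hence both the Clifford action and the pairing are natural under $\tau$: $(\tau^*u)(\tau^*F)=\tau^*(uF)$ and $(\tau^*A,\tau^*B)'=\tau^*\big((A,B)\big)$ for $u\in\Gamma(E)$ and $A,B,F\in\Gamma(S_E)$. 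It is essential here that we work with $S_E$ and the divergence-twisted operator $D_{\dv}$ rather than with spinor half-densities $\mathcal S_E$ and $\mathcal D$: the twist $|\bw^{\mathrm{top}}T^*M|^{1/2}$ does not pull back along a general (non-submersive) $\tau$, whereas $S_E$ does. This is the spinorial counterpart of the dilaton-existence issue in Remark~\ref{rem:mugen}.

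With this in hand, the routine checks are: (i) $\tau^*V_+\subset E'$ is again a generalized pseudometric, of the same rank and with the same Lorentzian signature of $\la\cdot,\cdot\ra|_{V_+}$ — in particular $R_{\tau^*V_+}^2=1$ and $(\cdot,\cdot)$ stays skew — and $\tau^*V_-=(\tau^*V_+)^\perp=\tau^*(V_-)$ since the pairing is pulled back; (ii) $\tau^*\dv$ is a divergence, compatible with $\tau^*V_+$ whenever $\dv$ is compatible with $V_+$ (both recorded in the discussion of CA-pullbacks above); (iii) the self-duality constraint transfers, because $R_{V_+}$ is assembled from a local oriented orthonormal frame of $V_+$, which pulls back, so $R_{\tau^*V_+}=\tau^*R_{V_+}$ and hence $R_{\tau^*V_+}(\tau^*F)=\tau^*(R_{V_+}F)=\tau^*F$; (iv) the Bianchi-type constraint transfers, i.e.\ $D_{\tau^*\dv}(\tau^*F)=0$. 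For (iv) I would use the local formula \eqref{gendirdiv}: the structure functions $c_{\alpha\beta\gamma}$ and the functions $\dv e_\alpha$ pull back by the defining properties of a CA-pullback and of $\tau^*\dv$, while the first-order term works because $\rho'(\tau^*e_\alpha)$ is $\tau$-related to $\rho(e_\alpha)$, so $\rho'(\tau^*e_\alpha)(\tau^*F)=\tau^*(\rho(e_\alpha)F)$ by the chain rule; combining gives $D_{\tau^*\dv}(\tau^*F)=\tau^*(D_{\dv}F)=0$.

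It remains to transfer the equations of motion \eqref{eq}. The scalar equation $\mathcal R_{\tau^*V_+,\tau^*\dv}=0$ is immediate from Theorem~\ref{thm:PLR}, since $\mathcal R^E_{V_+,\dv}=0$. For the tensorial equation, combine Theorem~\ref{thm:PLric} on the left with the naturality of the Clifford action and of the pairing on the right: for $u_+\in\Gamma(V_+)$, $v_-\in\Gamma(V_-)$,
\begin{multline*}
\gric^{E'}_{\tau^*V_+,\tau^*\dv}(\tau^*u_+,\tau^*v_-)=\tau^*\big(\gric^E_{V_+,\dv}(u_+,v_-)\big)\\
=\tau^*\Big(\tfrac{i}{8\nu}(u_+F,v_-F)\Big)=\tfrac{i}{8\nu}\big(\tau^*u_+\cdot\tau^*F,\ \tau^*v_-\cdot\tau^*F\big)'.
\end{multline*}
Since locally $\Gamma(\tau^*V_+)$ is spanned over $C^\infty(M')$ by sections of the form $\tau^*u_+$ (and likewise for $\tau^*V_-$), and both sides are $C^\infty(M')$-bilinear, this extends to all sections, which is exactly \eqref{eq} for $(\tau^*V_+\subset E',\tau^*\dv,\tau^*F)$. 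The one genuinely non-formal ingredient is the naturality of the spinorial package under $\tau^*$ asserted in the first paragraph — in particular the identification $S_{E'}\cong\tau^*S_E$ and the compatibility of $(\cdot,\cdot)$, $D_{\dv}$ and $R_{V_+}$ with it; I expect this to be the main point to pin down carefully, after which the proof is a bookkeeping assembly of Theorems~\ref{thm:PLric} and~\ref{thm:PLR}.
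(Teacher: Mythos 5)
Your proposal is correct and follows essentially the same route as the paper: the paper's proof likewise reduces everything to the naturality of the pulled-back data, checking $D_{\tau^*\dv}\tau^*F=\tau^*(D_{\dv}F)=0$ via the local formula \eqref{gendirdiv} and treating the remaining conditions (which amount to Theorems \ref{thm:PLric} and \ref{thm:PLR} plus the transfer of $R_{V_+}$) as immediate. Your write-up merely makes explicit the naturality of the spinor bundle, Clifford action and pairing under $\tau^*$, and the $C^\infty(M')$-bilinearity argument, which the paper leaves as ``obvious''.
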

\begin{proof}
We have $D_{\tau^*\dv}\tau^*F=\tau^*(D_{\dv}F)=0$, as follows for example from \eqref{gendirdiv}. The remaining conditions are obvious.
\end{proof}

To get solutions of type II SUGRA using the previous theorem we then need to make sure that $E'$ is exact and (if we want to have a dilaton) that $\tau^*\dv=\dv_\mu$ for a volume form $\mu$ (cf.\ \S\ref{sec:gsbe} and Remark \ref{rem:mugen}).

\subsection{Equivariant PL T-duality of type II SUGRAs}
Finally, let us extend \S\ref{sec:equiv} to generalized type II SUGRA. The main motivation is that it will be a great source of examples, both known and new.

\begin{thm}
Let $E\to M$ be an $\s$-equivariant CA, $\tau:M'\to M$ a smooth map, and $E'=\tau^*E$ a CA-pullback such that the $\s$-action on $E'\to M'$ integrates to a free and proper $\mathsf S$-action. Let $V_+\subset E$ be an admissible generalized pseudometric and $\dv$ an equivariant divergence. Finally, let us choose a section
$$F\in\bigl(\Gamma(S_E)\otimes(\bw^\text{top}\s)^{-1/2}\bigr)^{C\s}.$$

If	 $(V_+\subset E,\dv,F)$ is a solution of the ``equivariant generalized type II SUGRA equations''
\begin{subequations}\label{equivsugra}
\begin{gather}
R_{V_+}F=F,\quad D_{\dv}F=0,\quad \mathcal R_{V_+,\dv}=0\\
\gric_{V_+,\dv}(u_+,v_-)=\tfrac{i}{8\nu}(u_+F,v_-F)_\s\quad\forall u_+\in\Gamma(V_+), v_-\in\Gamma(V_-\cap\s_E^\perp)\label{es_b}
\end{gather}
\end{subequations}
then $(V'_{+,\redu}\subset E'_\redu,\dv'_\redu,F'_\redu)$, obtained from $(V_+\subset E,\dv,F)$ by the pullback followed by the reduction, is a solution of the generalized type II SUGRA equations.
\end{thm}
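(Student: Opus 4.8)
The plan is to combine the two operations—CA-pullback and $\mathsf{S}$-reduction—each of which we already understand, and check that the defining data of a type II SUGRA solution is preserved at every stage. Concretely, I would first pull everything back along $\tau$, obtaining on $E'=\tau^*E$ the admissible generalized pseudometric $\tau^*V_+$, the equivariant divergence $\tau^*\dv$, and the spinor $\tau^*F$ (which lies in $(\Gamma(S_{E'})\otimes(\bw^\text{top}\s)^{-1/2})^{C\s}$ because $\chi'(s)=\tau^*\chi(s)$). Then, since the $\mathsf{S}$-action on $E'\to M'$ is free and proper, I would reduce: $\tau^*V_+$ descends to $V'_{+,\redu}\subset E'_\redu$, $\tau^*\dv$ descends to $\dv'_\redu$ by Proposition \ref{divdescends}, and $\tau^*F$ descends via the isomorphism \eqref{spinred} to a spinor $F'_\redu\in\Gamma(S_{E'_\redu})$.

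The core of the proof is then a term-by-term verification of the system \eqref{eq} for $(V'_{+,\redu}\subset E'_\redu,\dv'_\redu,F'_\redu)$, invoking the compatibility results already established. For the equation $\mathcal{R}_{V_+,\dv}=0$: pullback preserves $\mathcal R$ by Theorem \ref{thm:PLR} and reduction preserves it by Theorem \ref{thm:Rred}, so $\mathcal R^{E'_\redu}_{V'_{+,\redu},\dv'_\redu}=0$. For the Dirac equation $D_{\dv'_\redu}F'_\redu=0$: the formula \eqref{gendirdiv} shows $D_{\tau^*\dv}\tau^*F=\tau^*(D_\dv F)=0$, and the remark after \eqref{spinred} says the reduction isomorphism intertwines $D_{\tau^*\dv}$ with $D_{\dv'_\redu}$, so $D_{\dv'_\redu}F'_\redu=0$. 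The self-duality $R_{V'_{+,\redu}}F'_\redu=F'_\redu$ follows because $R_{V_+}$ is built from an orthonormal frame of $V_+\subset\s_E^\perp$, which descends to an orthonormal frame of the reduced generalized metric, and the $R$-action commutes with the reduction identification.

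The remaining and most delicate equation is \eqref{eq2}'s companion, the modified Einstein equation $\gric_{V'_{+,\redu},\dv'_\redu}(u_+,v_-)=\tfrac{i}{8\nu}(u_+F'_\redu,v_-F'_\redu)$. Here I would argue as follows. By Theorem \ref{thm:PLric} the left-hand side pulls back correctly, and by Theorem \ref{thm:gricred} it descends under reduction: for $\s$-invariant sections $a_+\in\Gamma(\tau^*V_+)^\s$ and $b_-\in\Gamma(\tau^*V_-\cap\s_{E'}^\perp)^\s$ we have $\gric^{E'}_{\tau^*V_+,\tau^*\dv}(a_+,b_-)=\pi^*\gric^{E'_\redu}_{V'_{+,\redu},\dv'_\redu}(\tilde a_+,\tilde b_-)$. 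On the right-hand side, the crucial point is that the reduced spinor pairing is exactly the pairing $(\cdot,\cdot)_\s$ on $(S_{E'})^{\s\epsilon}\otimes(\bw^\text{top}\s)^{-1/2}$ defined in \S\ref{sec:spinred}, so $(u_+F'_\redu,v_-F'_\redu)$ at a point equals $(\tau^*u_+\,\tau^*F,\tau^*v_-\,\tau^*F)_\s$ (pulled back), which in turn equals $\tau^*(u_+F,v_-F)_\s$ since $\tau^*$ commutes with the Clifford action and with the pairing. Matching the two sides then reduces to the hypothesis \eqref{es_b} on $M$, pulled back by $\tau$ and pushed down by $\pi$. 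The main obstacle—and the step that needs the most care—is precisely the bookkeeping around the spinor bilinear: one must check that the sign/normalization conventions in \eqref{spinredpt}, in the definition of $(\cdot,\cdot)_\s$, and in the half-density twists $(\bw^\text{top}\s)^{\pm1/2}$ all fit together so that \eqref{es_b} on $E$ becomes exactly the genuine (unmodified-looking but still modified, since $\dv'_\redu$ need not come from a density) type II equation on $E'_\redu$; the compatibility of $(\cdot,\cdot)_\s$ with $R_{V_+}$ and with $D_\dv$, already asserted in \S\ref{sec:spinred}, is what makes this work, but spelling out the constant $\tfrac{i}{8\nu}$ survives unchanged requires the rank and signature assumptions ($\on{rank}V_+\equiv2\bmod 4$, Lorentzian) that guarantee $R^2_{V_+}=1$ and skew-symmetry of $(\cdot,\cdot)$ are inherited by the reduced data.
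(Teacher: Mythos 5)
Your proposal is correct and follows essentially the same route as the paper: the paper's proof is just the one-line observation that the statement follows from the pullback theorem for type II SUGRA together with the reduction compatibility already built into the framework (Theorems \ref{thm:gricred}, \ref{thm:Rred}, the divergence descent of Proposition \ref{divdescends}, and the fact that the spinor reduction isomorphism \eqref{spinred} intertwines the Dirac operators and carries $(\cdot,\cdot)_\s$ to the reduced pairing). You have simply unpacked these implicit ingredients explicitly, which is exactly how the argument is meant to go.
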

\begin{proof}
This follows immediately from the previous theorem.
\end{proof}

\begin{example}[PL T-duality for (generalized) type II SUGRA in the case of no spectators] \label{ex:eqPLII}
Let $E=\g$ be a quadratic Lie algebra, $\s\subset\g$ a unimodular and isotropic Lie subalgebra, and let $\dv=0$ (which is $\s$-equivariant as $\s$ is unimodular). Let $V_+\subset\g$ be an admissible generalized pseudometric, i.e.\ $V_+$ is an $\s$-invariant subspace, $\s\perp V_+$, and $\la,\ra|_{V_+}$ is nondegenerate.

Let us fix an isomorphism $\bw^{top}\s\cong\R$  and choose an element 
$$F\in(S_\g)^{C\s}\cong(S_{\s^\perp/\s})^\s.$$
 Let us suppose that  $(V_+\subset\g,\dv=0,F)$ is a solution of Equations \eqref{equivsugra}.

If $\mathsf H\subset\mathsf G$ is a Lie subgroup such that $\h\subset\g$ is a Lagrangian Lie subalgebra,  let  $M'=\mathsf G/\mathsf H$, with the usual exact CA structure on $E'=\g\times \mathsf G/\mathsf H$. Let us suppose that the action of $\mathsf S$ on $\mathsf G/\mathsf H$ is free and proper (in particular, $\s\cap\h=0$). Then $E'_\redu\to\mathsf S\backslash\mathsf G/\mathsf H$ is an exact CA. If $\dim V_+=\dim\g/2-\dim\s$, we thus get a solution of the modified type II SUGRA equations on the background $\mathsf S\backslash\mathsf G/\mathsf H$.\footnote{To be precise (i.e.\ to satisfy all the physical requirements), we need $\dim V_+=10$, the signature of $\la,\ra|_{V_+}$ needs to be Lorentzian, and we also need the transversality condition $\on{Ad}_g\h\cap(\s+V_+)=0$ for all $g\in \mathsf G$ (the last condition is needed to make $V'_{+,\redu}\subset E'_\redu$  correspond to a pair $(g,H)$, and it is needed because $\la,\ra|_{V_+}$ is indefinite).}

Moreover, if $\h$ is unimodular, it is a solution of the ordinary type II SUGRA equations; the dilaton corresponds to an invariant half-density on $\mathsf G$ pushed down to $\mathsf S\backslash\mathsf G/\mathsf H$ using invariant half-densities on $\mathsf H$ and $\mathsf S$.
\end{example}

\section{Examples: symmetric spaces}\label{sec:ex}

In this section we shall find various solutions of (generalized) type II SUGRA equations using the general setup of Example \ref{ex:eqPLII}. Namely, we shall get solutions on suitable symmetric spaces. The reason to look at this type of examples is that they are quite easy, and also have nice properties. The corresponding 2-dim $\sigma$-models are completely integrable systems (as shown in \cite{K} for groups and in \cite{DMV0} in generality). A 1-parameter family, $\eta$-deformed $AdS_5\times S^5$, found in \cite{DMV}, was shown in \cite{AFHRT} to be a solution of modified type IIB SUGRA equations. With our method we find many other families, some of them with several parameters.
\subsection{Building blocks}

Let $\a$ be a semisimple real Lie algebra with the invariant inner product $K=\mathcal{K}/\lambda$, where $\lambda\in\mathbb{R}$ and $\mathcal{K}$ is the Killing form of $\a$. Suppose furthermore that $\a$ is equipped with an involution, i.e.\ that we have an (orthogonal) splitting $\a=\a_0\oplus\a_1$ such that $[\a_0,\a_0]\subset\a_0$, $[\a_1,\a_1]\subset\a_0$, $[\a_0,\a_1]\subset\a_1$.

Consider the commutative algebra $\mathcal{B}_c:=\mathbb{R}[t]/(t^2-c)$ for a fixed $c\in\R$. We set 
$$\g:=\mathcal{B}_c\otimes \a \quad \text{with}\quad [p\otimes u,q\otimes v]:=pq\otimes [u,v]$$
and we define the inner product 
$$\la p \otimes u,q \otimes v \ra=(pq)_\text{lin}K(u,v)$$
 where $(a+bt)_\text{lin}:=b$ ($\forall a,b\in\R$). For simplicity we will omit the '$\otimes$' symbol from now on.
Finally, we take $$\s:=\a_0\subset\a\subset\g\quad \text{and}\quad V_+:=(1+t)\a_1.$$ This implies $V_-=(1-t)\a_1\oplus(\mathcal{B}_c\otimes\a_0)$.

To construct the spinors, we take the lagrangian decomposition of $\g$ in the form $L_1=t\a$, $L_2=\a$.  We identify $L_1\cong\a\cong\a^*$ via $K$ and $(\bw^{\mathrm{top}}L_1)^{-1/2}$ as well as $(\bw^{\mathrm{top}}\s)^{-1/2}$ with $\R$ using the volume forms associated to $K$ and $K|_{\a_0}$, respectively. Thus 
$$S_\g\cong\bw \a^*, \quad S_{\s^\perp/\s}\cong \bw \a_1^*,\quad \text{and} \quad (S_\g)^{C\s}\cong(\bw \a_1^*)^{\a_0}.$$
We will use the notation $j_v$ for the endomorphism of $S_\g\cong \bw \a^*$ given by
$w\mapsto v\wedge w$
for $v\in \a\cong \a^*$.

\begin{lem}
For this setup, the Dirac generating operator $D_0$ vanishes on $(S_\g)^{C\s}$.
\end{lem}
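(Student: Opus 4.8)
The plan is to compute $D_0$ explicitly as an operator on $S_\g\cong\bw\a^*$, to write it as a sum of two pieces of opposite form-degree shift, and to check that each piece annihilates $(\bw\a_1^*)^{\a_0}\cong(S_\g)^{C\s}$.

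Since $M$ is a point and $\dv=0$, the anchor vanishes, so by \eqref{gendirdiv} the operator is the cubic Clifford element $D_0=-\tfrac16 c_{\alpha\beta\gamma}\,e^\alpha e^\beta e^\gamma\in\cl(\g)$. I would take a $K$-orthonormal basis $\{X_A\}$ of $\a$ adapted to $\a=\a_0\oplus\a_1$; then $\{tX_A\}$ and the $\la\cdot,\cdot\ra$-dual basis $\{X^A\}$ are bases of $L_1$ and $L_2$, and under $S_\g\cong\bw\a^*$ the element $tX_A$ acts by exterior multiplication $j_{X_A}$ and $X^A$ by interior contraction $\iota_{X^A}$. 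Because the bracket of $\g$ is $\mathcal B_c$-bilinear, $(pq)_{\mathrm{lin}}$ extracts the $t$-coefficient, and $L_1,L_2$ are isotropic, a short computation shows that the only nonzero structure constants are $c_{(tX_A)(tX_B)(tX_C)}=c\,f_{ABC}$ (all three indices in $L_1$) and $c_{(tX_A)(X^B)(X^C)}=\epsilon_B\epsilon_C f_{ABC}$ (exactly one index in $L_1$), together with their reorderings, where $f_{ABC}:=K([X_A,X_B],X_C)$ is the totally antisymmetric Cartan tensor of $\a$. Passing to the dual basis $\{e^\alpha\}$, the first family gives a threefold contraction and the second a one-contraction--two-wedges operator, so $D_0=D_0^{+1}+c\,D_0^{-3}$: here $D_0^{-3}$ is (a constant times) the interior product $\iota_\phi$ with the Cartan trivector $\phi\in\bw^3\a$ dual to $f_{ABC}$, and $D_0^{+1}$, after one reorders the (non-orthogonal) dual basis vectors into normal form --- a routine step in which all linear corrections cancel because each carries a coefficient $f_{AAC}$ --- is exactly the Chevalley--Eilenberg differential $d_\a$ of $\a$ on $\bw\a^*$. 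Equivalently one may read off $D_0^{+1}=d_\a$ from the case $c=0$, where $\g=\a\ltimes\a^*$ is the cotangent Lie algebra and $D_0=d_\a$ is classical, since $D_0^{+1}$ does not involve $c$.

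It then remains to see that $d_\a$ and $\iota_\phi$ each kill $F\in(\bw\a_1^*)^{\a_0}$; as they change the $\bw$-degree by $+1$ and by $-3$, one treats them separately. For $\iota_\phi$: a form supported on $\a_1$ pairs nontrivially only with the $\bw^3\a_1$-component of $\phi$, and that component vanishes because $[\a_1,\a_1]\subset\a_0$ and $\a_0\perp\a_1$ force $f_{\mu\nu\rho}=0$ for $X_\mu,X_\nu,X_\rho\in\a_1$; so $\iota_\phi F=0$ already for all $F\in\bw\a_1^*$. For $d_\a$: a bracket of two elements of $\a$ has a nonzero $\a_1$-component only when exactly one of them lies in $\a_0$, so $d_\a F$ is supported on tuples with precisely one argument in $\a_0$, and on such a tuple $(Y_1,\dots,Y_k,Z)$ with $Y_i\in\a_1$, $Z\in\a_0$ it equals $(-1)^k(Z\cdot F)(Y_1,\dots,Y_k)=0$ by $\a_0$-invariance of $F$ --- the classical fact that invariant forms on a symmetric pair are closed. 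Hence $D_0F=d_\a F+c\,\iota_\phi F=0$.

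The one genuinely fiddly step is the Clifford bookkeeping identifying $D_0^{+1}$ with $d_\a$: since the dual basis of $\g$ is not orthogonal (the two Lagrangians pair with each other), reordering $e^\alpha e^\beta e^\gamma$ produces linear corrections, and one must verify that each of them has a coefficient of the form $f_{AAC}$ and therefore drops out. Everything else is linear algebra or an immediate consequence of the symmetric-pair relations, and --- as noted --- the $c=0$ specialization lets one sidestep this bookkeeping by quoting the known generating Dirac operator of $\a\ltimes\a^*$.
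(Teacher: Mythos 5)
Your proof is correct and follows essentially the same route as the paper's: after observing that only the all-$L_1$ and one-$L_1$ structure constants survive, you identify $D_0$ with $d_{CE}-c\,\iota_f$ (up to the irrelevant normalization of the contraction term) and check that each piece annihilates $(\bw\a_1^*)^{\a_0}$ by the standard symmetric-pair arguments. The extra care you take with the Clifford reordering, and the $c=0$ shortcut via $\a\ltimes\a^*$, simply fill in details the paper leaves implicit.
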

\begin{proof}
Let $E_\alpha$ be a basis and $f\in\wedge^3\a_0+(\wedge^2 \a_1)\wedge \a_0$ the structure constants of $\a$. Since
$$\la [t\a,t\a],\a\ra=\la [\a,\a],\a\ra=0,$$
the expression for $D_0$ reduces to 
\[D_0=-\frac{1}{2}f^{\alpha\beta\gamma}j_{E_\alpha}j_{E_\beta}\iota_{E_\gamma}-\frac{c}{6}f^{\alpha\beta\gamma}\iota_{E_\alpha}\iota_{E_\beta}\iota_{E_\gamma}=d_{CE}-c\,\iota_f,\]
where $d_{CE}$ is the Chevalley-Eilenberg differential.
One now easily sees that both terms vanish separately on $(\bw \a_1^*)^{\a_0}\subset \bw \a^*$. 
\end{proof}

\begin{lem}\label{algebraic_ric}
We have
\[\gric_{V_+,0}(V_+,\a_0)=0\]
and
\[\gric_{V_+,0}((1+t)u,(1-t)v)=\textstyle{\frac{c-1}{2}}\mathcal{K}_\a(u,v)\]
for all $u,v\in \a_1$.
\end{lem}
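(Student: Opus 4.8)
The plan is to compute $\gric^\g_{V_+,0}$ directly from the definition \eqref{gricci}, which with $\dv=0$ reduces to
$$\gric_{V_+,0}(a_+,b_-)=-\on{Tr}_{V_+}[[\,\cdot\,,b_-]_-,a_+]_+,$$
a purely algebraic quantity since $\g$ is a CA over a point. The key structural input is the decomposition $V_+=(1+t)\a_1$, $V_-=(1-t)\a_1\oplus\mathcal B_c\a_0$, together with the bracket rule $[p u,q v]=pq[u,v]$ on $\g=\mathcal B_c\a$ and the involution relations $[\a_0,\a_0]\subset\a_0$, $[\a_1,\a_1]\subset\a_0$, $[\a_0,\a_1]\subset\a_1$. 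I would first record how the projections $(\,\cdot\,)_\pm$ act: for $w\in\a_1$ one has $(1+t)w\in V_+$ and $(1-t)w\in V_-$, while all of $\mathcal B_c\a_0$ lies in $V_-$; and since $t^2=c$, products like $(1+t)(1-t)=1-c$ and $(1+t)^2=(1+c)+2t$ must be tracked carefully when re-expressing elements in the $V_\pm$ basis.

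For the first claim, $\gric_{V_+,0}(V_+,\a_0)=0$: take $b_-=x\in\a_0\subset\mathcal B_c\a_0\subset V_-$ and $a_+=(1+t)w$ with $w\in\a_1$. I would show the linear map $V_+\to V_+$, $a_+'\mapsto[[a_+',x]_-,a_+]_+$ is traceless — in fact I expect $[a_+',x]_-=[a_+',x]$ already lies in $V_+$ (since $[(1+t)\a_1,\a_0]\subset(1+t)\a_1=V_+$, using $[\a_1,\a_0]\subset\a_1$), so $[a_+',x]_-=0$ and the whole expression vanishes identically, not merely in trace. This is the easy part.

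For the second claim, set $a_+=(1+t)u$ and $b_-=(1-t)v$ with $u,v\in\a_1$. One computes, for $a_+'=(1+t)w\in V_+$,
$$[a_+',b_-]=[(1+t)w,(1-t)v]=(1-c)[w,v]\in(1-c)\a_0\subset\mathcal B_c\a_0\subset V_-,$$
so $[a_+',b_-]_-=(1-c)[w,v]$. Then
$$[[a_+',b_-]_-,a_+]=(1-c)[[w,v],(1+t)u]=(1-c)(1+t)[[w,v],u]\in(1+t)\a_1=V_+,$$
so the $(\,\cdot\,)_+$ projection is the identity here, giving the endomorphism $w\mapsto(1-c)(1+t)[[w,v],u]$ of $V_+\cong(1+t)\a_1\cong\a_1$. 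Its trace over $V_+$ equals $(1-c)\on{Tr}_{\a_1}\!\big(w\mapsto[[w,v],u]\big)=(1-c)\on{Tr}_{\a_1}(\on{ad}_u\on{ad}_v)$ (using $[[w,v],u]=-[u,[v,w]]$ and that $\on{ad}_u\on{ad}_v$ preserves $\a_1$). Since $\a_1$ and $\a_0$ are orthogonal and $\on{ad}_u\on{ad}_v$ maps $\a_0\to\a_0$ and $\a_1\to\a_1$, one has $\on{Tr}_{\a_1}(\on{ad}_u\on{ad}_v)=\mathcal K_\a(u,v)-\on{Tr}_{\a_0}(\on{ad}_u\on{ad}_v)$; and $\on{Tr}_{\a_0}(\on{ad}_u\on{ad}_v)$ vanishes because $\on{ad}_u\on{ad}_v$ sends $\a_0$ into $[\a_1,[\a_1,\a_0]]\subset[\a_1,\a_1]\subset\a_0$... wait — that does not force the trace to vanish; the correct point is rather that $\on{ad}_u$ maps $\a_0\to\a_1$ and $\a_1\to\a_0$, so $\on{ad}_u\on{ad}_v\colon\a_0\to\a_1\to\a_0$, and its trace on $\a_0$ equals the trace on $\a_1$ of the same operator read off the other composition order, hence $\on{Tr}_{\a_0}(\on{ad}_u\on{ad}_v)=\on{Tr}_{\a_1}(\on{ad}_v\on{ad}_u)=\on{Tr}_{\a_1}(\on{ad}_u\on{ad}_v)$, so $\mathcal K_\a(u,v)=2\on{Tr}_{\a_1}(\on{ad}_u\on{ad}_v)$. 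Therefore $\gric_{V_+,0}((1+t)u,(1-t)v)=-\tfrac12(1-c)\mathcal K_\a(u,v)=\tfrac{c-1}{2}\mathcal K_\a(u,v)$, as claimed.

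The main obstacle is bookkeeping: getting the $\mathcal B_c$-arithmetic right (the factors $1-c$ from $(1+t)(1-t)$, and making sure no $V_+$-component sneaks into intermediate brackets) and correctly relating $\on{Tr}_{\a_1}(\on{ad}_u\on{ad}_v)$ to the full Killing form $\mathcal K_\a$ via the $\pm1$-eigenspace decomposition of the involution. I would double-check the trace identity by noting that on a symmetric Lie algebra $\on{ad}_u$ for $u\in\a_1$ swaps $\a_0\leftrightarrow\a_1$, so in block form $\on{ad}_u\on{ad}_v$ is block-diagonal and its two diagonal blocks are transposes of each other's factors, forcing equal traces and hence $\mathcal K_\a|_{\a_1\times\a_1}=2\on{Tr}_{\a_1}(\on{ad}_u\on{ad}_v)$.
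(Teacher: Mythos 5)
Your proposal is correct and, in its main body, follows the same route as the paper: with $\dv=0$ over a point, $\gric_{V_+,0}(a_+,b_-)=-\on{Tr}_{V_+}[[\,\cdot\,,b_-]_-,a_+]_+$; the first claim follows since $[(1+t)\a_1,\a_0]\subset(1+t)\a_1=V_+$ (this is exactly the paper's reference to the admissibility identity \eqref{gricvan}); and the $\mathcal B_c$-arithmetic $(1+t)(1-t)=1-c$ reduces the second claim to $-(1-c)\on{Tr}_{\a_1}\bigl(w\mapsto[[w,v],u]\bigr)$, just as in the paper (the paper removes the inner projection by invoking $[V_+,V_+]_+=0$, you do it by noting the bracket already lands in $\a_0\subset V_-$ — equally fine). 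The only genuine divergence is how the Killing-form identity $\on{Tr}_{\a_1}(\on{ad}_u\on{ad}_v)=\tfrac12\mathcal K_\a(u,v)$ is established: the paper does an index computation with the totally antisymmetric structure constants, using $K([\a_1,\a_1],\a_1)=K([\a_0,\a_0],\a_1)=0$, while you use the block structure of $\on{ad}_u\on{ad}_v$. Two cautions there. First, the step $\on{Tr}_{\a_1}(\on{ad}_v\on{ad}_u)=\on{Tr}_{\a_1}(\on{ad}_u\on{ad}_v)$ is \emph{not} formal cyclicity: the difference is $\on{Tr}_{\a_1}(\on{ad}_{[v,u]})$, and its vanishing uses that $\on{ad}_w$ is skew-adjoint for the invariant pairing and that $\a_0\perp\a_1$ with $K|_{\a_1}$ nondegenerate (equivalently, write $\on{ad}_u$ in block form and note the two off-diagonal blocks are $K$-adjoints of each other up to sign). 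Your ``transposes of each other's factors'' remark is exactly this, but you should say explicitly that the transpose is taken with respect to the invariant form — also so that the argument covers the indefinite factor $\mathfrak o(m-1,2)$, where ``transpose'' must mean $K$-adjoint. Second, a harmless sign slip: $[[w,v],u]=+[u,[v,w]]=\on{ad}_u\on{ad}_v w$, not $-[u,[v,w]]$; your displayed equalities and the final answer $\tfrac{c-1}{2}\mathcal K_\a(u,v)$ are nevertheless correct.
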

\begin{proof}
The first claim is easy to check (cf.\ \eqref{gricvan}).
For the second one, we define
\[p\colon \a_1\to V_+\subset \g,\quad u\mapsto (1+t)u.\]
Then we have
\begin{align*}
-\gric_{V_+,0}&((1+t)u,(1-t)v)=\on{Tr}_{V_+}[[\,\cdot\,,(1-t)v]_-,(1+t)u]_+\\
&=\on{Tr}_{V_+}[[\,\cdot\,,(1-t)v],(1+t)u]_+=\on{Tr}_{\a_1}p^{-1}\bigl([[p(\cdot),(1-t)v],(1+t)u]_+\bigr)\\
&=(1-c)\on{Tr}_{\a_1}[[\,\cdot\,,v],u].
\end{align*}
Here in the second step we used $[V_+,V_+]_+=0$.

To finish, let us now use the Latin indices for the elements of a basis $E_a$ of $\a_1$, the Greek indices for a basis $E_\alpha$ of $\a$ and denote again the structure constants of $\a$ by $f$. We also set $E^\alpha:=K^{\alpha \beta}E_\beta$ (the dual basis). Since 
\begin{equation}\label{gradovana_identita}
K([\a_1,\a_1],\a_1)=K([\a_0,\a_0],\a_1)=0,
\end{equation}
we have
\[\mathcal{K}_{\a}(E_a,E_b)=-f_{a\gamma\delta} f_{b}^{\,\,\gamma\delta}=-f_{ac\delta} f_{b}^{\,\,c\delta}-f_{a\gamma d} f_{b}^{\,\,\gamma d}=-2f_{ac\delta} f_{b}^{\,\,c\delta}=2\on{Tr}_{\a_1}[[\cdot,E_a],E_b],\]
implying $\on{Tr}_{\a_1}[[\,\cdot\,,v],u]=\frac{1}{2}\mathcal{K}_{\a}(u,v)$.
\end{proof}

\begin{lem}\label{biggamma}
\[\mathcal{R}_{V_+,0}=\textstyle{\frac{1+c}{4}}\lambda\dim \a_1.\]
\end{lem}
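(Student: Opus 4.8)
The plan is to evaluate $\mathcal R_{V_+,0}$ directly from \eqref{genR}. As $\g$ is a CA over a point and $\dv=0$, the first two terms of \eqref{genR} vanish, so $\mathcal R_{V_+,0}=-\tfrac16 c_{abc}c^{abc}-\tfrac12 c_{ab\bar c}c^{ab\bar c}$. The one structural input I need is that for $u,v\in\a_1$ one has $[(1+t)u,(1+t)v]=((1+c)+2t)[u,v]$ with $[u,v]\in\a_0$, so $[V_+,V_+]\subseteq\mathcal B_c\otimes\a_0\subseteq V_-$; this is the relation $[V_+,V_+]_+=0$ already used in Lemma \ref{algebraic_ric}. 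Hence $c_{abc}=\langle e_a,[e_b,e_c]\rangle=0$ (since $V_+\perp V_-$) and $[e_a,e_b]_-=[e_a,e_b]$, so by \eqref{ccred}
$$\mathcal R_{V_+,0}=-\tfrac12\,c_{ab\bar c}c^{ab\bar c}=-\tfrac12\sum_{a,b}\langle[e_a,e_b],[e^a,e^b]\rangle .$$

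Next I would fix an explicit frame. Take a basis $E_a$ of $\a_1$ with $\mathcal K_\a$-dual basis $\hat E^a\in\a_1$ and set $e_a:=(1+t)E_a$. Then $\langle e_a,e_b\rangle=((1+t)^2)_{\text{lin}}K(E_a,E_b)=2K(E_a,E_b)$, so the $\langle\,,\,\rangle$-dual basis of $V_+$ is $e^a=\tfrac{\lambda}{2}(1+t)\hat E^a$ (the $\lambda$ appears because $K=\mathcal K_\a/\lambda$). Now $[e_a,e_b]=((1+c)+2t)[E_a,E_b]$ and $[e^a,e^b]=\tfrac{\lambda^2}{4}((1+c)+2t)[\hat E^a,\hat E^b]$, both lying in $\mathcal B_c\otimes\a_0$; since the linear part of $\tfrac{\lambda^2}{4}((1+c)+2t)^2$ is $\lambda^2(1+c)$, pairing these (and using $K=\mathcal K_\a/\lambda$) gives
$$\langle[e_a,e_b],[e^a,e^b]\rangle=\lambda(1+c)\,\mathcal K_\a\bigl([E_a,E_b],[\hat E^a,\hat E^b]\bigr),$$
so that $\mathcal R_{V_+,0}=-\tfrac12\lambda(1+c)\,\Sigma$, with $\Sigma:=\sum_{a,b}\mathcal K_\a\bigl([E_a,E_b],[\hat E^a,\hat E^b]\bigr)$.

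The remaining step --- and the only one carrying any content --- is the identity $\Sigma=-\tfrac12\dim\a_1$. By invariance of $\mathcal K_\a$ one has $\mathcal K_\a([E_a,E_b],[\hat E^a,\hat E^b])=-\mathcal K_\a\bigl(E_b,[E_a,[\hat E^a,\hat E^b]]\bigr)$; carrying out the $b$-sum over the dual pair $(E_b,\hat E^b)$ converts this into a trace over $\a_1$, namely $\Sigma=-\sum_a\on{Tr}_{\a_1}\bigl(X\mapsto[[X,\hat E^a],E_a]\bigr)$. At this point the identity $\on{Tr}_{\a_1}[[\,\cdot\,,v],u]=\tfrac12\mathcal K_\a(u,v)$ --- established inside the proof of Lemma \ref{algebraic_ric} --- applies with $(v,u)=(\hat E^a,E_a)$ and gives $\on{Tr}_{\a_1}\bigl(X\mapsto[[X,\hat E^a],E_a]\bigr)=\tfrac12\mathcal K_\a(E_a,\hat E^a)$; summing over $a$ yields $\Sigma=-\tfrac12\dim\a_1$, whence $\mathcal R_{V_+,0}=\tfrac{1+c}{4}\lambda\dim\a_1$. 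I expect this trace computation to be the only subtle point, though it is really just bookkeeping --- the same ``half the Killing form'' effect already dealt with for $\gric$ in Lemma \ref{algebraic_ric}. (Alternatively, the equality $\sum_a\on{Tr}_{\a_1}(X\mapsto[[X,\hat E^a],E_a])=\tfrac12\dim\a_1$ can be obtained by splitting the trace of the grading-preserving operator $\sum_a[E_a,[\hat E^a,\,\cdot\,]]$ over $\a=\a_0\oplus\a_1$ and using cyclicity of the trace together with the symmetry of $\sum_a E_a\otimes\hat E^a$.)
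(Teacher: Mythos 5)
Your proof is correct and takes essentially the same route as the paper: both reduce $\mathcal R_{V_+,0}$ to $-\tfrac12\la[e_a,e_b],[e^a,e^b]\ra$ using $[V_+,V_+]\subset\mathcal B_c\otimes\a_0\subset V_-$, extract the factor $1+c$ from the linear part of $(1+c+2t)^2$, and identify the remaining contraction with half the Killing-form trace over $\a_1$. The only cosmetic difference is that the paper performs this last step via the structure-constant identity $f_{a\beta\gamma}f^{a\beta\gamma}=2f_{ab\gamma}f^{ab\gamma}$, while you reuse the identity $\on{Tr}_{\a_1}[[\,\cdot\,,v],u]=\tfrac12\mathcal K_{\a}(u,v)$ already established in the proof of Lemma \ref{algebraic_ric}, which rests on the same grading fact.
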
 
\begin{proof}
Adopting the notation from the previous proof and using (\ref{gradovana_identita}) we have 
\[f_{a\beta\gamma}f^{a\beta\gamma}=2f_{ab\gamma}f^{ab\gamma}\]
and thus
\begin{align*}
\mathcal{R}_{V_+,0}&=-\textstyle{\frac{1}{8}}\la [(1+t)E_a,(1+t)E_b],[(1+t)E^a,(1+t)E^b]\ra\\
&=-\textstyle{\frac{1+c}{2}}f_{ab\gamma}f^{ab\gamma}=-\textstyle{\frac{1+c}{4}}f_{a\beta\gamma}f^{a\beta\gamma}=\textstyle{\frac{1+c}{4}}\mathcal{K}(E_a,E^a)=\textstyle{\frac{1+c}{4}}\lambda\dim \a_1.\qedhere
\end{align*}
\end{proof}

\subsection{Putting the blocks together}\label{blocks_together}

We now restrict our attention to the ten-dimensional case and try to construct solutions to the (generalized) SUGRA in the form $AdS_m\times X$ for $X$ a Riemannian symmetric space. Notice that $AdS_m$ is itself a (Lorentzian) symmetric space, $AdS_m=O(m-1,2)/O(m-1,1)$, of dimension $m$.

Let $\a^{(k)}$, $k=1,\dots, N$, be some real compact simple Lie algebras with invariant inner products $K_k=\lambda_k^{-1}\mathcal{K}_k$, where $\mathcal{K}_k$ are the respective Killing forms. Suppose also that all these algebras are equipped with involutions,\footnote{We exclude the non-interesting cases when the involutions are equal to the identity.} inducing the splittings $\a^{(k)}=\a^{(k)}_0\oplus \a^{(k)}_1$. In addition, we set $$\a^{(0)}:=\mathfrak{o}(m-1,2),\quad \a^{(0)}_0:=\mathfrak{o}(m-1,1)$$ for a fixed $m\in\{2,\dots,8\}$, we choose $c_k\in\R$ for $k=0,1,\dots, N$ and define
$$\g^{(k)}:=\mathcal{B}_{c_k}\otimes \a^{(k)},\quad [p\otimes u,q\otimes v]:=pq\otimes [u,v],\quad \la p \otimes u,q \otimes v \ra=(pq)_{\text{lin}}K_k(u,v).$$
We set\footnote{This fixes the overall scaling freedom present in the SUGRA equations of motion.} $\lambda_0=1$ and impose that $\lambda_k<0$ for $k\neq 0$.

We also allow the possibility of adding an ``abelian part'' to the story, corresponding to tori in the resulting symmetric space. Thus, let $\b$ be an abelian Lie algebra with a positive definite inner product $K_\b$. We make the (trivial) split $\b=\b_0\oplus \b_1$ by taking $\b_0=0$, $\b_1=\b$ and define the quadratic abelian Lie algebra $\g^{(\b)}:=\mathcal{B}_0\otimes \b$ as above.

As the generalized pseudometrics we take 
\[ V_+^{(k)}:=(1+t)\a_1^{(k)}\subset\g^{(k)},\quad V_+^{(\b)}:=(1+t)\b\subset\g^{(\b)}.\]
Finally, we set
\[\g:=\g^{(0)} \oplus \dots \oplus \g^{(N)} \oplus \g^{(\b)},\]
together with
\[V_+:=V_+^{(0)}\oplus \dots \oplus V_+^{(N)}\oplus V_+^{(\b)}\subset \g,\]
and 
\[\s:=\a^{(0)}_0 \oplus \dots \oplus \a^{(N)}_0\subset \g\]
and we suppose that $\dim V_+=10$. Notice in particular that the inner product on $V_+$ is Lorentzian (since $K|_{\a^{(0)}_1}$ has the signature $-+\dots+$, $K_\b$ is positive definite, and the remaining $\a^{(k)}$'s have negative definite Killing forms). 
Let us use the notation
\[\a^{\text{total}}:=\a^{(0)} \oplus \dots \oplus \a^{(N)}\oplus \b\subset \g\quad \text{and}\quad \a^{\text{total}}_1:=\a^{(0)}_1 \oplus \dots \oplus \a^{(N)}_1\oplus \b_1\subset \g.\]

Concerning spinors, we have $\g=L_1\oplus L_2$, $L_1=t\a^{\text{total}}$, $L_2=\a^{\text{total}}$.  Identifying again $L_1\cong\a^{\text{total}}\cong(\a^{\text{total}})^*$ and $(\bw^{\mathrm{top}}L_1)^{-1/2}\cong\R\cong(\bw^{\mathrm{top}}\s)^{-1/2}$ via the corresponding inner products and volume forms respectively, we have 
$$S_{\s^\perp/\s}\cong \bw (\a_1^{\text{total}})^*\quad \text{and} \quad (S_\g)^{C\s}\cong(\bw \a_1^{\text{total}*})^{\s}$$
and also \[(A,B)_\s=(A,B)=((\vartheta A) \wedge B)^{\text{top}},\quad \forall A,B\in\bw (\a_1^{\text{total}})^*.\]

\subsection{The generalized SUGRA equations}
First, let us discuss the self-duality condition for spinors.
\begin{lem}\label{manysigns}
Let $*$ denote the Hodge operator on $\bw (\a_1^{\text{total}})^*$ induced by $K$.
Then 
\[R_{V_+}=*\nu\vartheta,\]
where $\vartheta\colon \bw (\a_1^{\text{total}})^*\to \bw (\a_1^{\text{total}})^*$ is the operator $\vartheta \xi=i^{|\xi|}\xi$, and
$\nu$ is equal to $1$ for even degree spinors and $i$ for odd spinors. 
\end{lem}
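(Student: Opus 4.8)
The plan is to unwind the definition of $R_{V_+}$ from \S\ref{spinorbundle} in the concrete model $S_\g\cong\bw(\a_1^{\text{total}})^*$ and compare the resulting operator to the composite $*\,\nu\,\vartheta$. Recall $R_{V_+}=2^{n/2}e_1\cdots e_n\in\cl(\g)$ for an oriented orthonormal basis $e_a$ of $V_+$, with $n=\dim V_+=10$. First I would pick an orthonormal basis $u_1,\dots,u_n$ of $\a_1^{\text{total}}$ (with respect to $K$), so that $e_a:=(1+t)u_a$ is a basis of $V_+$; I would compute $\la e_a,e_b\ra=\langle (1+t)u_a,(1+t)u_b\rangle = \big((1+t)^2\big)_{\text{lin}}K(u_a,u_b)=2K(u_a,u_b)$, so the normalization $R_{V_+}=2^{n/2}e_1\cdots e_n=e_1\cdots e_n/\prod_a|K(u_a,u_a)|^{1/2}$ makes this a genuine lift of a reflection, and I would fix the orientation of $V_+$ so that $R_{V_+}$ matches the chosen orientation conventions on $\bw(\a_1^{\text{total}})^*$.

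Next I would evaluate the action of a single Clifford element $(1+t)u$ on $S_\g\cong\bw\a^{\text{total}*}$ using the lagrangian splitting $L_1=t\a^{\text{total}}$, $L_2=\a^{\text{total}}$: since $(1+t)u = tu + u$, its action is $j_u$ (wedge, coming from the $L_1$-part $tu$ under the identification $L_1\cong\a^{\text{total}}\cong\a^{\text{total}*}$) plus $\iota_u$ (contraction, coming from the $L_2$-part $u$), i.e.\ $(1+t)u$ acts as $j_u+\iota_u$. For an orthonormal set $u_1,\dots,u_n$ the product $(j_{u_1}+\iota_{u_1})\cdots(j_{u_n}+\iota_{u_n})$, acting on a form $\xi\in\bw(\text{span}\,u_a)^*\subset\bw\a^{\text{total}*}$, is exactly (a normalization of) the Hodge star: each factor $j_{u_a}+\iota_{u_a}$ either adds or removes $u_a^*$, and on a monomial the total product, after collecting the inevitable sign from reordering and from the Clifford/Hodge signs, sends $\xi$ to $\pm\,{*}\xi$ up to a power of $i$ that tracks the degree $|\xi|$. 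This is the core computation, and I expect it to be the main obstacle: one must carefully bookkeep (i) the sign from the signature of $K$ on $\a_1^{\text{total}}$ (which is Lorentzian, $-+\cdots+$), (ii) the sign/power of $i$ in $R_{V_+}^2=1$ (already asserted under the running hypothesis $\on{rank}V_+\equiv 2$ mod $4$ and Lorentzian signature), and (iii) the degree-dependent factor, which is precisely where $\vartheta$ enters. Matching all of these should collapse the product to $*\,\nu\,\vartheta$ with $\nu=1$ in the even case and $\nu=i$ in the odd case.

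Concretely, the cleanest route is: (a) verify the identity $R_{V_+}=*\,\nu\,\vartheta$ on the two extreme monomials $1\in\bw^0$ and the top form $u_1^*\wedge\cdots\wedge u_n^*\in\bw^n$, where both sides are easy to evaluate explicitly; (b) check that both $R_{V_+}$ and $*\,\nu\,\vartheta$ intertwine the $\cl(\a_1^{\text{total}})$-action (for $R_{V_+}$ this is the reflection property from \S\ref{spinorbundle}; for $*\,\nu\,\vartheta$ it is the standard interaction of Hodge star with wedge and contraction, plus the fact that $\vartheta$ shifts $j$ to $\pm i\,\iota$), so that agreeing on one cyclic generator of the Clifford module forces agreement everywhere; and (c) conclude. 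The self-duality condition $R_{V_+}F=F$ then reads $*\,\nu\,\vartheta F=F$ in this model, which is the usual (twisted) self-duality of the RR polyform. No step requires machinery beyond the definitions recalled in \S\ref{spinorbundle}–\S\ref{sec:spinred} and the parity/rank hypotheses already in force; the only genuinely delicate point is the sign-and-phase chase in (a)–(b).
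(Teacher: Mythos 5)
Your proposal is correct and takes essentially the same route as the paper: there, too, one fixes a $K$-orthonormal basis $E_a$ of $\a_1^{\text{total}}$, observes that $R_{V_+}=\prod_a(1+t)E_a$ acts on $\bw(\a_1^{\text{total}})^*$ as $\prod_a(\iota_{E_a}+j_{E_a})$, and evaluates this on a general $k$-form to get $(-1)^{\left[\frac{k+1}{2}\right]+kn}{*}={*}\,\nu\vartheta$ --- i.e.\ the sign-and-phase chase you defer to steps (a)--(b) is carried out there by a direct computation on monomials. One small slip to fix: since $\la(1+t)u_a,(1+t)u_a\ra=\pm2$, the normalization cancelled by the factor $2^{n/2}$ is $\prod_a|\la e_a,e_a\ra|^{1/2}=2^{n/2}$ rather than $\prod_a|K(u_a,u_a)|^{1/2}=1$, though the identity you actually use, namely that $R_{V_+}=\prod_a(1+t)u_a$ acts as $\prod_a(j_{u_a}+\iota_{u_a})$, is correct.
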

\begin{proof}
Using the identification $(\a_1^{\text{total}})^*\cong\a_1^{\text{total}}$ and an orthonormal basis $E_a$ of $\a_1^{\text{total}}$, we may write for $\xi\in\wedge^k (\a_1^{\text{total}})^*$
\begin{align*}
R_{V_+}\xi= [\textstyle{\prod_{a=1}^n(1+t) E_a]\xi}=[\textstyle{\prod_{a=1}^n(\iota_{E_a}+j_{E_a})]\xi}=(-1)^{\left[\frac{k+1}{2}\right]+kn}*\xi.
\end{align*}
Since $(-1)^{\left[\frac{k+1}{2}\right]+kn}\xi=\nu\vartheta\xi$, the lemma follows.
\end{proof}

To deal with the RHS of (\ref{es_b}), let us choose $F\in (\bw (\a_1^{\text{total}})^*)^{\s}\cong (S_\g)^{C\s}$ such that $R_{V_+}F=F$ and define $\psi_F\in (\a^{\text{total}\,*}_1)^{\otimes 2}$ by
\[\psi_F(u,v):=\textstyle{\frac{i}{4\nu}}((1+t)u F, (1-t)v F),\quad u,v\in \a^{\text{total}}_1.\]
Notice that $\psi_F$ is $\s$-invariant as a consequence of the invariance of $F$. It can be conveniently computed as follows:

\begin{lem}\label{psicko}
For $u,v\in \a^{\text{total}}_1$
\[\psi_F(u,v)=\textstyle{\frac{1}{4}}([(\iota_u+j_u)(\iota_v-j_v)F]\wedge *F)^{\text{top}}.\]
\end{lem}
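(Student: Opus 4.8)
The plan is to work in the explicit model $S_\g\cong\bw(\a^{\text{total}})^*$ fixed in \S\ref{blocks_together}: first compute the Clifford actions of $(1+t)u$ and $(1-t)v$ on $F$, then use the Clifford-compatibility \eqref{mukai} of the spinor pairing to push both Clifford factors onto a single copy of $F$, and finally trade the resulting Mukai-type pairing $(F,\,\cdot\,)$ for the stated wedge-with-$*F$ pairing by invoking the self-duality $R_{V_+}F=F$ together with Lemma \ref{manysigns}.

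With the Lagrangian splitting $L_1=t\a^{\text{total}}$, $L_2=\a^{\text{total}}$ and the identifications of \S\ref{blocks_together}, a generator $1\otimes w\in L_2$ acts on $\bw(\a^{\text{total}})^*$ by the contraction $\iota_w$ and $t\otimes w\in L_1$ by the wedging $j_w$; hence $(1+t)u$ acts as $\iota_u+j_u$ and $(1-t)v$ as $\iota_v-j_v$, both preserving $\bw(\a_1^{\text{total}})^*$. Being a vector, $(1+t)u$ has transpose $((1+t)u)^T=i(1+t)u$. Writing $G:=(\iota_u+j_u)(\iota_v-j_v)$ and feeding the odd element $x=(1+t)u$ into \eqref{mukai},
\[
((1+t)uF,(1-t)vF)=(-1)^{|F|}\bigl(F,((1+t)u)^T(\iota_v-j_v)F\bigr)=(-1)^{|F|}\,i\,(F,GF),
\]
so that $\psi_F(u,v)=\dfrac{(-1)^{|F|+1}}{4\nu}\,(F,GF)$.

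It then remains to prove the purely spinorial identity $\dfrac{(-1)^{|F|+1}}{\nu}(F,GF)=(GF\wedge *F)^{\text{top}}$. Expanding $(F,GF)=((\vartheta F)\wedge GF)^{\text{top}}$ by the definition of the pairing, Lemma \ref{manysigns} rewrites $R_{V_+}F=F$ as $F=\nu *\vartheta F$; applying $*$ once more and invoking the double-Hodge-star identity on $\bw^k(\a_1^{\text{total}})^*$ — the one place where $\dim\a_1^{\text{total}}=10$ and the Lorentzian signature are used — re-expresses $*F$ in terms of $\vartheta F$. Substituting this into $(GF\wedge *F)^{\text{top}}$ and commuting homogeneous components past one another in top degree (with $\alpha\wedge\beta=(-1)^{l(10-l)}\beta\wedge\alpha$ for $\alpha\in\bw^l$, $\beta\in\bw^{10-l}$) collapses it to $-\nu\,((\vartheta F)\wedge GF)^{\text{top}}=-\nu(F,GF)$; since $-\nu^2=(-1)^{|F|+1}$ (i.e.\ $\nu=1$ for even $F$ and $\nu=i$ for odd $F$) the identity follows, and combined with the previous display it gives $\psi_F(u,v)=\tfrac14(GF\wedge *F)^{\text{top}}$, which is the claim.

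The argument is conceptually routine; the one real difficulty is sign bookkeeping — the transpose rule $((1+t)u)^T=i(1+t)u$ and the attendant factor $(-1)^{|F|}i$ coming out of \eqref{mukai}, the double Hodge-star sign (where the hypotheses $\dim V_+=10$ and Lorentzian signature intervene), and the Koszul signs incurred in reordering top-degree wedge products. I would carry these out degree by degree and check that all contributions conspire to produce exactly the factor $\tfrac14$ in the statement.
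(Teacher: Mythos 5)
Your argument is correct and essentially the paper's: both proofs rest on the Clifford-compatibility \eqref{mukai} of the pairing together with the self-duality $F=\nu*\vartheta F$ from Lemma \ref{manysigns}, the only difference being that the paper organizes the final signs via the skew-symmetry of the pairing and the identity $((\vartheta A)\wedge *B)^{\text{top}}=(A\wedge *\vartheta B)^{\text{top}}$, while you use the double-Hodge-star sign and graded commutativity of the wedge in top degree --- an equivalent bookkeeping. The part you defer does close as claimed: in Lorentzian signature and dimension $10$ one gets $*F=-\nu\,\vartheta^{-1}F$, and reordering the top-degree components gives exactly $(GF\wedge *F)^{\text{top}}=-\nu((\vartheta F)\wedge GF)^{\text{top}}$, which combined with $-\nu^2=(-1)^{|F|+1}$ produces the stated factor $\tfrac14$.
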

\begin{proof}
We have $((\vartheta A)\wedge *B)^{\text{top}}=(A\wedge *\vartheta B)^{\text{top}}$ for all $A,B\in \bw (\a_1^{\text{total}})^*$ and $(\vartheta^2 A,B)=(A,\vartheta^2 B)$ since $\dim \a^{\text{total}}_1=10$. Therefore
\begin{multline*}
((1+t)u F, (1-t)v F)=i(\vartheta^2 F, (u+tu)(v-tv) F)=-i(\vartheta^2 (u+tu)(v-tv) F,F)\\
=-i(\vartheta^2 (u+tu)(v-tv) F,*\nu\vartheta F)=-i\nu([(\iota_u+j_u)(\iota_v-j_v)F]\wedge *F)^{\text{top}}.
\end{multline*}
\end{proof}

 Then, putting things together, we obtain the following.

\begin{thm}
In the setup of \S\ref{blocks_together}, the generalized SUGRA equations \eqref{equivsugra} are equivalent to
\begin{subequations}\label{equivsugraalgebraic}
\begin{align}
(1+c_0)\dim \a^{(0)}_1&=\textstyle{\sum_{k=1}^N} (-\lambda_k) (1+c_k) \dim \a^{(k)}_1,\label{first_algebraic}\\
(c_k-1)\mathcal{K}_{\a^{(k)}}(u,v)&=\psi_F(u,v),\quad \forall u,v\in \a^{(k)}_1,\quad k=0,\dots, N\label{second_algebraic}\\
\psi_F(\b,\b)&=0,\label{third_algebraic}
\end{align}
\end{subequations}
where $F\in (\bw (\a_1^{\text{total}})^*)^{\s}$ is self-dual ($R_{V_+}F=F$).
\end{thm}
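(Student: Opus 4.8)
The plan is to verify, one at a time, the four conditions that make up the equivariant generalized type II SUGRA equations \eqref{equivsugra}. Two of them cost nothing: $R_{V_+}F=F$ is part of the data appearing on both sides of the asserted equivalence, and $D_0F=0$ is automatic. For the latter, write $\a^{\text{total}}=\a^{(0)}\oplus\cdots\oplus\a^{(N)}\oplus\b$; since all cross-block brackets and all brackets inside $\b$ vanish, the structure constants of $\a^{\text{total}}$ lie in $\bw^3\s+(\bw^2\a_1^{\text{total}})\wedge\s$, so the computation in the first Lemma of \S\ref{sec:ex} applies to the whole of $\g$ and exhibits $D_0$ as $d_{CE}-\sum_kc_k\,\iota_{f^{(k)}}$, every term of which annihilates $(\bw\a_1^{\text{total}*})^\s\cong(S_\g)^{C\s}$; in particular $D_0F=0$.

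For the remaining two conditions I use that \eqref{genR} and \eqref{gricci} (with $\rho=0$, $\dv=0$) make both $\mathcal R$ and $\gric$ additive over the orthogonal, mutually commuting summands $\g^{(0)},\dots,\g^{(N)},\g^{(\b)}$. Hence $\mathcal R_{V_+,0}=\sum_{k=0}^N\mathcal R^{\g^{(k)}}_{V_+^{(k)},0}$ (the abelian summand contributing $0$), and Lemma \ref{biggamma} together with $\lambda_0=1$ turns $\mathcal R_{V_+,0}=0$ into precisely \eqref{first_algebraic}. For the Einstein-type equation \eqref{es_b}, I first record that $V_-\cap\s_E^\perp=(1-t)\a_1^{\text{total}}\oplus\s$ and that on the $\s$-summand both sides vanish: $\gric_{V_+,0}(u_+,\chi(s))=0$ by \eqref{gricvan}, while $\chi(s)F=0$ kills the spinor pairing. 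So \eqref{es_b} is equivalent to
$$\gric_{V_+,0}\bigl((1+t)u,(1-t)v\bigr)=\tfrac12\,\psi_F(u,v)\qquad\forall\,u,v\in\a_1^{\text{total}},$$
the right side being $\tfrac{i}{8\nu}((1+t)uF,(1-t)vF)_\s$ rewritten via the identification $(\cdot,\cdot)_\s=(\cdot,\cdot)$ of \S\ref{blocks_together} and the definition of $\psi_F$. By additivity and Lemma \ref{algebraic_ric} the left side equals the block-diagonal tensor $\sum_{k=0}^N\tfrac{c_k-1}{2}\mathcal K_{\a^{(k)}}(u_k,v_k)$, so the equation becomes the tensor identity $\sum_{k}(c_k-1)\mathcal K_{\a^{(k)}}=\psi_F$ on $\a_1^{\text{total}}$, whose diagonal $(k,k)$-blocks are \eqref{second_algebraic} and whose $\b$-block is \eqref{third_algebraic}.

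The one step that is not bookkeeping is checking that this tensor identity hides no extra equation, i.e.\ that the off-diagonal blocks of $\psi_F$ vanish on their own. Since $\psi_F$ is $\s$-invariant, for $k\neq l$ (including $l$ the abelian label) the $\a_1^{(k)*}\otimes\a_1^{(l)*}$-component lies in $(\a_1^{(k)*})^{\a_0^{(k)}}\otimes\a_1^{(l)*}$, which is $0$ because the isotropy module $\a_1^{(k)}$ of the simple symmetric pair $(\a^{(k)},\a_0^{(k)})$ has no nonzero $\a_0^{(k)}$-fixed vector (clear for $k=0$, where $\a_1^{(0)}=\R^m$ is the standard $\mathfrak o(m-1,1)$-module, and standard in the compact case since the involutions are non-trivial). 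Assembling the three ingredients in both directions then gives the stated equivalence. I expect the genuine content to be essentially this vanishing argument together with the routine tracking of normalizations — the factor $\tfrac12$ linking \eqref{es_b} to $\psi_F$, the identification $(\cdot,\cdot)_\s=(\cdot,\cdot)$, and Lemmas \ref{manysigns} and \ref{psicko} — while everything else follows at once from Lemmas \ref{algebraic_ric} and \ref{biggamma} and the direct-sum structure.
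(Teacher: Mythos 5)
Your proposal is correct and follows essentially the same route as the paper's proof: it reduces everything to Lemmas \ref{algebraic_ric}, \ref{biggamma} and \ref{psicko} via the orthogonal block decomposition, handles $v_-\in\s$ separately, and disposes of the off-diagonal blocks of $\psi_F$ by the same $\s$-invariance/no-invariant-vector argument. The extra details you supply (additivity of $\mathcal R$ and $\gric$ over the summands, the extension of the $D_0$-vanishing lemma to the full direct sum, and the explicit reason why the isotropy modules have no fixed vectors) are points the paper leaves implicit, so nothing is missing or different in substance.
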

\begin{proof}
Recall that $V_-\cap\s^\perp=\s\oplus\bigoplus_{k=0}^N(1-t)\a^{(k)}_1$.
Due to the $\s$-invariance of $F$, the RHS of equation (\ref{es_b}) vanishes for $u_+\in V_+, v_-\in \s$.

The fact that \[\psi_F(\b,\a^{\text{total}}_1)=\psi_F(\a^{\text{total}}_1,\b)=\psi_F(\a^{(k)}_1,\a^{(l)}_1)=0,\quad \forall k  \neq l\]
follows from the $\s$-invariance of $\psi_F$. Indeed, if $u\in\a^{(k)}_1$ (resp. $u\in\b$), then for any $l\neq k$ the element $\psi_F(u,\cdot)|_{\a^{(l)}_1}\in(\a^{(l)}_1)^*$ is $\a^{(l)}_0$-invariant and hence vanishes.

The proof then follows from lemmas (\ref{algebraic_ric}), (\ref{biggamma}), (\ref{psicko}), the fact that $\b$ is abelian and $\g=\bigoplus_{k=0}^N\g^{(k)}\oplus\g^{(\b)}$ is an orthogonal decomposition.
\end{proof}

\subsection{Constructing exact Courant algebroids}
Let us construct a connected Lie group $\mathsf G$ integrating $\g$ as follows: For every $k=0,\dots,N$ let $\mathsf G^{(k)}$ be a connected group integrating $\g^{(k)}$ and let $\mathsf G^{(\b)}$ be a torus integrating $\g^{(\b)}$ (i.e.\ $\mathsf G^{(\b)}=\g^{(\b)}/\Lambda$ for some lattice $\Lambda$; we choose $\mathsf G^{(\b)}$ to be compact because of physical considerations). We then set $\mathsf G=\mathsf G^{0}\times\dots\times\mathsf G^{(N)}\times\mathsf G^{(\b)}$. Let $\mathsf S\subset \mathsf G$ be the connected group integrating $\s$. 

We now need to choose a closed subgroup $\mathsf H\subset\mathsf G$ such that $\h^\perp=\h$  and such that $\mathsf S$ acts freely and properly on $\mathsf G/\mathsf H$. There is no canonical choice of $\mathsf H$. We shall use the following  $\mathsf H$ which can be defined if $c_k\leq 0$ for each $k$ (but other choices are possible):

If $c_k=0$, let $\h^{(k)}=t\a^{(k)}\subset \g^{(k)}$ and let $\mathsf H^{(k)}\subset\mathsf G^{(k)}$ be the corresponding (abelian) connected Lie group.

If $c_k<0$ then $\mathcal{B}_{c_k}\cong \C$ and thus $\g^{(k)}\cong\a^{(k)}\otimes\C$. Let $\h^{(k)}$  be the direct sum of the real Cartan subalgebra\footnote{i.e.\ the real span of the coroots} and the nilpotent (complex) Lie subalgebra spanned by the negative root spaces.  Notice that $\h^{(k)}$ is not unimodular in this case. Again $\mathsf H^{(k)}\subset\mathsf G^{(k)}$ is the corresponding connected Lie subgroup. For $c_k\leq 0$ the subgroups $\mathsf H^{(k)}\subset\mathsf G^{(k)}$ form a continuous family (parametrized by $c_k$).

Finally, let $\mathsf H^{(\b)}\subset \mathsf G^{(\b)}$ be a torus such that $\h^{(\b)}\subset \g^{(\b)}$ is lagrangian. We set
$\mathsf H=\mathsf H^{(0)}\times\dots\times\mathsf H^{(N)}\times\mathsf H^{(\b)}$ and we get (using the Iwasawa decomposition $\mathsf G^{(i)}=\mathsf A^{(i)}_{\C}=\mathsf A^{(i)}\mathsf H^{(i)}$ for $i\geq 1$ if $c_i<0$)
\begin{equation}
\label{resulting_symmetric_space}\mathsf{S}\backslash \mathsf{G}/\mathsf{H}\cong AdS_m\times \mathsf{A}^{(1)}/\mathsf{A}_0^{(1)}\times \dots\times \mathsf{A}^{(N)}/\mathsf{A}_0^{(N)}\times (S^1)^{\dim \b},
\end{equation}
where $\mathsf{A}^{(k)}$ and $\mathsf{A}_0^{(k)}$ are Lie groups integrating $\a^{(k)}$ and $\a_0^{(k)}$, respectively. If $c_k=0$ for every $k$ then the generalized metric $V_+$ gives us  a pseudo-Riemannian metric $g$ making this manifold to a symmetric pseudo-Riemannian space, and the 3-form $H$ vanishes. For general $c_k$'s the metric $g$ (and also the 3-form $H$) is quite different, but it depends continuously on $c_k$'s.

\subsection{First ansatz}
As a first example, let us consider $N=1$, \[\a^{(1)}=\mathfrak{su}(M+1),\quad \a^{(1)}_0=\mathfrak{s}(\mathfrak{u}(1)\oplus\mathfrak{u}(M)),\] for a fixed $M\in\{1,2,3\}$, implying $\mathsf{S}\backslash \mathsf{G}/\mathsf{H}=AdS_{10-2M}\times \C \text{P}^M$. 
 The space $\wedge^2 (\a_1^{(1)})^*$ is spanned by the element\footnote{This corresponds (up to a constant) to the symplectic form on $\C \text{P}^M$.}
\[\Omega=e_1\wedge e_2+\dots+e_{2M-1}\wedge e_{2M}\]
for a suitable orthonormal basis (w.r.t.\ $K_1$) $e_a$ of $\a^{(1)}_1$
and a general self-dual element of $(\bw (\a_1^{\text{total}})^*)^{\s}$ is of the form
\[F=p(\Omega)+R_{V_+}p(\Omega)=p(\Omega)+*\vartheta p(\Omega),\]
for $p$ a polynomial,
\[p(\Omega)=\sum_{n=0}^M \frac{d_n}{n!}\Omega^n,\quad d_n\in\R.\]

We omit the proof of the following proposition (it is just a straightforward calculation).
\begin{prop}
Equations (\ref{equivsugraalgebraic}) reduce to
\begin{align*}
(5-M)(1+c_0)&=-\lambda_1 M(1+c_1),\\
2(1-c_0)&=\sum_{n=0}^M d_n^2{M \choose n},\\
-\lambda_1 2M(1-c_1)&=\sum_{n=0}^M d_n^2{M \choose n}(2n-M),\\
0&=\sum_{n=1}^M d_n d_{n-1}{{M-1} \choose {n-1}}.
\end{align*}
\end{prop}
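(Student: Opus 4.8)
The plan is to match each relation of \eqref{equivsugraalgebraic} against the concrete data of the present ansatz ($N=1$, $\a^{(1)}=\mathfrak{su}(M+1)$, $\a^{(1)}_0=\mathfrak s(\mathfrak u(1)\oplus\mathfrak u(M))$, $\b=0$, $\mathsf S\backslash\mathsf G/\mathsf H=AdS_{10-2M}\times\C\mathrm{P}^M$). The relation \eqref{first_algebraic} is immediate: $\dim\a^{(0)}_1=\dim AdS_{10-2M}=10-2M$ and $\dim\a^{(1)}_1=\dim\C\mathrm{P}^M=2M$, so it becomes $(10-2M)(1+c_0)=(-\lambda_1)(1+c_1)\,2M$, i.e.\ the first displayed equation after dividing by $2$; and \eqref{third_algebraic} is vacuous since $\b=0$.

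For \eqref{second_algebraic} I would first use $\s$-invariance of $\psi_F$ to count how many scalar equations it contains. Restricted to $\a^{(0)}_1\times\a^{(0)}_1$, $\psi_F$ is an $\mathfrak o(m-1,1)$-invariant bilinear form on $\a^{(0)}_1\cong\R^{m-1,1}$; the space of these is one-dimensional and spanned by the (symmetric) form $K_0$, so — since $\lambda_0=1$ gives $\mathcal{K}_{\a^{(0)}}=K_0$ — \eqref{second_algebraic} for $k=0$ is the single scalar equation $c_0-1=\gamma_0$, where $\psi_F|_{\a^{(0)}_1}=\gamma_0K_0$. Restricted to $\a^{(1)}_1\times\a^{(1)}_1$, $\psi_F$ is an $\mathfrak s(\mathfrak u(1)\oplus\mathfrak u(M))$-invariant bilinear form on $\a^{(1)}_1\cong\C^M$, and the space of these is two-dimensional, spanned by the symmetric form $K_1$ and the antisymmetric Kähler form $\Omega$; hence \eqref{second_algebraic} for $k=1$ splits into its symmetric part $(c_1-1)\lambda_1=\gamma_1$ (with $\psi_F|^{\mathrm{sym}}_{\a^{(1)}_1}=\gamma_1K_1$) and the vanishing of the $\Omega$-component of $\psi_F|_{\a^{(1)}_1}$. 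These will yield the last three displayed equations once the relevant scalars are computed.

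To compute them I would feed $F=p(\Omega)+R_{V_+}p(\Omega)$, with $R_{V_+}=*\nu\vartheta$, into Lemma~\ref{psicko}. Writing $\a_1^{\mathrm{total}}=\a^{(0)}_1\oplus\a^{(1)}_1$, the point is that $p(\Omega)$ has no legs along $(\a^{(0)}_1)^*$ while $R_{V_+}p(\Omega)=*\nu\vartheta p(\Omega)$ is $\mathrm{vol}_{\a^{(0)}_1}$ wedged with a form on $(\a^{(1)}_1)^*$, so it carries all of them. Counting legs along $(\a^{(0)}_1)^*$ and $(\a^{(1)}_1)^*$ separately in $([(\iota_u+j_u)(\iota_v-j_v)F]\wedge *F)^{\mathrm{top}}$, most terms cannot reach top degree — here one uses $M\le 3$, i.e.\ $\dim\a^{(0)}_1=10-2M\ge4$ — and the survivors reduce to the combinatorics of powers of $\Omega$ via $\tfrac1{n!}*_{(1)}\Omega^n=\tfrac1{(M-n)!}\Omega^{M-n}$ and $\|\Omega^n\|^2=(n!)^2\binom Mn$. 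For $u,v\in\a^{(0)}_1$ one gets $\gamma_0K_0(u,v)\propto K_0(u,v)\,\|p(\Omega)\|^2=K_0(u,v)\sum_nd_n^2\binom Mn$. For $u,v\in\a^{(1)}_1$ the symmetric part is handled by tracing over an orthonormal basis $e_a$ of $\a^{(1)}_1$ and using $\sum_a(\iota_{e_a}+j_{e_a})(\iota_{e_a}-j_{e_a})=2(k-M)$ on $\bw^k(\a^{(1)}_1)^*$, giving $2M\gamma_1\propto\sum_n(2n-M)d_n^2\binom Mn$; for the $\Omega$-component one uses that the antisymmetrization of $(\iota_u+j_u)(\iota_v-j_v)$ is $\iota_u\iota_v-j_uj_v$, which pairs $\Omega^n$ only with $\Omega^{n\pm1}$-type terms and produces a coefficient $\propto\sum_nd_nd_{n-1}\binom{M-1}{n-1}$. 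Combining these with \eqref{second_algebraic} and $\mathcal{K}_{\a^{(k)}}=\lambda_kK_k$ gives the three remaining displayed equations.

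The hard part is not conceptual but the sign and normalization bookkeeping needed to turn the proportionalities above into the exact coefficients $1,\,2,\,2n-M,\,\binom Mn,\,\binom{M-1}{n-1}$: one must keep straight the powers of $i$ from $\vartheta$, the signs of $*$ on the Lorentzian factor $\a^{(0)}_1$ and of $**$, the factor $2^{n/2}$ in the definition of $R_{V_+}$, the halves in the Clifford relation $uv+vu=\la u,v\ra$ and in $\gric_{V_+,0}((1+t)u,(1-t)v)=\tfrac12\psi_F(u,v)$, and the $n!$'s hidden in $p(\Omega)$ and in $\|\Omega^n\|^2$. This is precisely the ``straightforward calculation'' alluded to in the statement; everything else is the invariance dichotomy (one equation on the $AdS_{10-2M}$ factor, two on the $\C\mathrm{P}^M$ factor) together with the leg-count that eliminates all mixed contributions.
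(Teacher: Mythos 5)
Your outline is correct and is essentially the calculation the paper intends: the paper omits the proof of this proposition as a ``straightforward calculation'', and your route (the dimension count for \eqref{first_algebraic}, the invariant-theory splitting of $\psi_F$ into one scalar on the $AdS$ block and a $K_1$-component plus an $\Omega$-component on the $\C\mathrm{P}^M$ block, then Lemma \ref{psicko} with the leg-count along $(\a^{(0)}_1)^*$ and the Lorentzian sign, exactly as in the paper's proof of Lemma \ref{biglemma}) is the natural implementation. The concrete ingredients you cite check out --- in particular $\sum_a(\iota_{e_a}+j_{e_a})(\iota_{e_a}-j_{e_a})=2(k-M)$ on $\bw^k(\a^{(1)}_1)^*$, $\|\Omega^n\|^2=(n!)^2\binom{M}{n}$, the antisymmetrization $\iota_u\iota_v-j_uj_v$, and the Lorentzian minus sign that makes the $p(\Omega)$ and $R_{V_+}p(\Omega)$ contributions add rather than cancel --- and they do reproduce the stated coefficients, so the bookkeeping you defer is genuinely routine.
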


Notice that there are $M+4$ free parameters constrained by 4 equations, giving an $M$-parameter class of solutions.

\subsection{Second ansatz}
We shall now construct solutions where $F$ is built only out of volume forms. 

Let us denote by $\omega_k$ and $\omega_\b$ the metric volume forms on $\a^{(k)}_1$, for $k=0,\dots, N$ and on $\b=\b_1$, respectively. Let us consider a general element of 
$(S_\g)^{C\s}\cong(\bw \a_1^{\text{total}*})^{\s}$ which is a linear combination of products of $\omega_i$'s:
\[\hat F:=\!\!\!\!\!\!\!\sum_{h\in \{0,1\}^{N+2}} \!\!\!\!\!\! f(h)\,\omega_0^{h(0)}\wedge\dots\wedge \omega_N^{h(N)}\wedge\omega_\b^{h(N+1)}\]
(for arbitrary coefficients $f(h)\in\R$)
or, if $\b=0$,
\[\hat F:=\!\!\!\!\!\!\!\sum_{h\in \{0,1\}^{N+1}} \!\!\!\!\!\! f(h)\,\omega_0^{h(0)}\wedge\dots\wedge \omega_N^{h(N)}.\]
Let us then set
$$F:=\hat F+R_{V_+}\hat F.$$
Clearly, $F$ is self-dual, $\s$-invariant, and thus $D_0$-closed.

\begin{lem}\label{biglemma}
If
\begin{equation}\label{diota}
(\iota_u \iota_v F\wedge * F)^\text{top}=0,\quad \forall u,v\in \a_1^{\text{total}},
\end{equation}
then the conditions \eqref{second_algebraic}, \eqref{third_algebraic} are equivalent to
\begin{equation}\label{eq4}
2(1-c_k)\lambda_k=\sum_{h}(-1)^{h(0)+h(k)}f(h)^2
\end{equation}
for $k=0,\dots,N$ if $\b=0$ or $k=0,\dots,N+1$ if $\b\neq 0$,
where we define $\lambda_{N+1}:=0$.
\end{lem}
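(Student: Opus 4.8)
The strategy is to evaluate $\psi_F(u,v)$ from Lemma \ref{psicko} when $F=\hat F+R_{V_+}\hat F$, using the hypothesis \eqref{diota} to kill the cross terms, and then recognize the diagonal blocks in \eqref{second_algebraic} and \eqref{third_algebraic}. First I would write $F=\hat F+*\nu\vartheta\hat F$ by Lemma \ref{manysigns}, and expand
$$\psi_F(u,v)=\textstyle{\frac14}\bigl([(\iota_u+j_u)(\iota_v-j_v)F]\wedge *F\bigr)^{\text{top}}.$$
Multiplying out $(\iota_u+j_u)(\iota_v-j_v)=\iota_u\iota_v-\iota_uj_v+j_u\iota_v-j_uj_v$ and using $*F=\vartheta\hat F+\nu'(\text{stuff})$ appropriately, I would observe that the term with $\iota_u\iota_v F$ gives, after wedging with $*F$ and taking the top component, exactly a multiple of the left side of \eqref{diota} (for each monomial of $\hat F$), hence vanishes by hypothesis; likewise the $j_uj_v$ term vanishes for degree reasons (a wedge of two 1-forms on the same side times $*F$ overshoots or is killed by $(\vartheta A\wedge B)^{\text{top}}=(A\wedge\vartheta B)^{\text{top}}$ symmetry, which forces $u=v$ contributions only, and those drop out of the bilinear pairing against $\mathcal{K}$ by antisymmetry — I would check this carefully). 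What survives is the ``mixed'' combination $j_u\iota_v-\iota_uj_v$, i.e.\ essentially a first-order operator acting on the monomials $\omega_0^{h(0)}\wedge\dots$.

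The key computational step is then: for $u,v\in\a_1^{(k)}$ and a monomial $\omega_0^{h(0)}\wedge\dots\wedge\omega_N^{h(N)}\wedge\omega_\b^{h(N+1)}$, compute $(j_u\iota_v-\iota_uj_v)$ applied to it, wedge with the Hodge dual of the (same or complementary) monomial, and extract the top component. Since $\omega_k$ is the volume form of $\a_1^{(k)}$, contracting $\iota_v\omega_k$ for $v\in\a_1^{(k)}$ produces the ``co-1-form'' in the $\a_1^{(k)}$ block, and wedging back with $j_u$ reconstructs $\mathcal{K}_{\a^{(k)}}(u,v)$ times $\omega_k$ up to a sign depending on $h(k)$; contracting against $u,v$ in a block $\a_1^{(l)}$ with $l\neq k$ gives zero because $\omega$-factors for $l\neq k$ are untouched and the parities don't match. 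Putting $\hat F$ and $R_{V_+}\hat F=*\nu\vartheta\hat F$ together, the monomial indexed by $h$ pairs (via the $*$) with the monomial indexed by the complement $\bar h$, and a sign-bookkeeping over $\vartheta$ and over the Hodge star produces the factor $(-1)^{h(0)+h(k)}$: the $h(0)$ comes from the Lorentzian signature of $K|_{\a_1^{(0)}}$ (one minus sign), and the $h(k)$ from whether the block $k$ is ``filled'' in the given monomial (so that $u,v$ act on $\omega_k$ itself versus on its dual slot). Summing over all $h$ and comparing with $\psi_F(u,v)=(c_k-1)\mathcal{K}_{\a^{(k)}}(u,v)$, and recalling $\mathcal{K}_{\a^{(k)}}=\lambda_k K_k$ so that $\mathcal{K}_{\a^{(k)}}(u,v)=\lambda_k\langle u,v\rangle_{K_k}$ on $\a_1^{(k)}$, yields \eqref{eq4}; for the $\b$-block there is no Killing form, but $\psi_F(\b,\b)=0$ is \eqref{third_algebraic}, and the same computation with $\lambda_{N+1}:=0$ reproduces it, giving the unified statement.

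The main obstacle I anticipate is the sign bookkeeping — tracking the interplay of $\vartheta$ (the $i^{|\cdot|}$-twist), the Hodge star $*$ on $\bw(\a_1^{\text{total}})^*$ (whose sign depends on degrees and on the Lorentzian signature), and the reordering of wedge factors when $u,v$ are inserted into a specific block while the other blocks are spectators. I would organize this by fixing, once and for all, an ordered orthonormal basis of $\a_1^{\text{total}}$ compatible with the block decomposition, writing each $\omega_i$ explicitly in that basis, and computing $(j_u\iota_v-\iota_uj_v)\,\omega_0^{h(0)}\wedge\dots$ on basis vectors $u=v=e_a$; the general bilinear case then follows by polarization together with the antisymmetry argument that discards $u=v$ ambiguities. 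A secondary point to verify is that \eqref{diota} indeed annihilates precisely the $\iota_u\iota_v F$ contribution and nothing else is silently assumed — this should be immediate from the expansion but is worth stating explicitly. Everything else (the reduction of the full system \eqref{equivsugra} to \eqref{equivsugraalgebraic}, the vanishing of off-diagonal blocks by $\s$-invariance) is already available from the preceding theorem and Lemmas \ref{algebraic_ric}, \ref{biggamma}, \ref{psicko}.
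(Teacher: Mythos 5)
Your overall plan coincides with the paper's proof: expand $\psi_F$ via Lemma \ref{psicko}, discard the $\iota_u\iota_v$ and $j_uj_v$ contributions, evaluate the mixed operator $j_u\iota_v-\iota_uj_v$ block by block on the monomials $\omega_0^{h(0)}\wedge\dots$, obtain the sign $(-1)^{h(0)+h(k)}$ (Lorentzian signature for $h(0)$, filled/empty $k$-th slot for $h(k)$), and compare with $(c_k-1)\mathcal K_{\a^{(k)}}=(c_k-1)\lambda_k K_k$, with $\lambda_{N+1}:=0$ covering \eqref{third_algebraic}. The genuine gap is your treatment of the $j_uj_v$ term. It does not vanish for degree reasons: $F$ is inhomogeneous, so $(j_uj_vF\wedge *F)^{\text{top}}$ pairs the degree-$p$ component of $F$ against the component of complementary degree to $p+2$ and is not a priori zero. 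Nor does the ``antisymmetry against $\mathcal K$'' argument rescue you: the $\iota_u\iota_v$ and $j_uj_v$ terms constitute exactly the part of $\psi_F$ antisymmetric in $(u,v)$, while \eqref{second_algebraic} is an identity of bilinear maps required for \emph{all} $u,v$; a nonvanishing antisymmetric part would be an additional constraint and would destroy the claimed equivalence with \eqref{eq4} rather than ``drop out''. What is actually needed --- and what the paper uses --- is that the hypothesis \eqref{diota} itself implies $(j_uj_vF\wedge *F)^{\text{top}}=0$: using the self-duality $R_{V_+}F=F$ (Lemma \ref{manysigns}) together with the adjointness of $j_u$ and $\iota_u$ under the pairing (cf.\ \eqref{mukai}), the $j_uj_v$ pairing is converted into an $\iota_u\iota_v$ pairing and killed by \eqref{diota}. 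Without this step your $\psi_F$ is only determined up to an unknown antisymmetric piece, and the equivalence is not established.

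Two smaller points. First, the core computation you outline is indeed the paper's: $j_u\iota_v\,\omega^h=K(u,v)\,\omega^h$ when $h(k)=1$, $\iota_uj_v\,\omega^h=K(u,v)\,\omega^h$ when $h(k)=0$, and $(\omega^h\wedge *\omega^h)^{\text{top}}=(-1)^{h(0)}$, which assembles into $\psi_F(u,v)=-\tfrac12 K(u,v)\sum_h(-1)^{h(0)+h(k)}f(h)^2$. Second, when you combine $\hat F$ with $R_{V_+}\hat F$ you compress the bookkeeping into ``$h$ pairs with $\bar h$''; to land exactly on the sum over $h$ of $f(h)^2$ in \eqref{eq4} you must show that the contributions of $\hat F$ and of $R_{V_+}\hat F$ simply double the $\hat F$-only answer, i.e.\ that
$\tfrac14\bigl([(j_u\iota_v-\iota_uj_v)F]\wedge *F\bigr)^{\text{top}}=\tfrac12\bigl([(j_u\iota_v-\iota_uj_v)\hat F]\wedge *\hat F\bigr)^{\text{top}}$,
which is the intermediate identity the paper passes through; this reduction should be carried out explicitly rather than absorbed into ``sign-bookkeeping over $\vartheta$ and the Hodge star''.
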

\begin{proof}
Let us consider the case $\b=0$; the case of $\b\neq0$ is similar. The condition (\ref{diota}) implies also $(j_u j_v F\wedge * F)^\text{top}=0$ and thus, taking $u,v\in \a^{(k)}_1$ for $k\in\{0,\dots, N\}$,
\begin{align*}
\psi_F(u,v)=\tfrac{1}{4}([(j_u\iota_v-\iota_u j_v) F]\wedge*F)^{\text{top}}.
\end{align*}
An easy calculation now shows that this expression equals
\begin{align*}
\tfrac{1}{2}&([(j_u\iota_v-\iota_u j_v) \hat F]\wedge*\hat F)^{\text{top}}=\tfrac{1}{2}\!\!\sum_{h(k)=1} f(h)([j_u \iota_v\omega_0^{h(0)}\wedge\dots\wedge \omega_N^{h(N)}]\wedge * \hat F)^{\text{top}}\\
&-\tfrac{1}{2}\!\!\sum_{h(k)=0} f(h)([\iota_u j_v\omega_0^{h(0)}\wedge\dots\wedge \omega_N^{h(N)}])\wedge * \hat F)^{\text{top}}\\
&=-\tfrac{1}{2}\sum_h (-1)^{h(k)}f(h)^2K(u,v) \bigl((\omega_0^{h(0)}\wedge\dots\wedge \omega_N^{h(N)})\wedge *(\omega_0^{h(0)}\wedge\dots\wedge \omega_N^{h(N)})\bigr)^{\text{top}}\\
&=-\tfrac{1}{2}K(u,v) \sum_h (-1)^{h(0)+h(k)}f(h)^2,
\end{align*}
where in the next-to-last equality we have used that 
\[(\omega_0^{h(0)}\wedge\dots\wedge \omega_N^{h(N)})\wedge *(\omega_0^{h(0)}\wedge\dots\wedge \omega_N^{h(N)})=(-1)^{h(0)}\]
due to the Lorentzian signature of the inner product on $\a^{(0)}_1$.
\end{proof}

Putting things together, we obtain:

\begin{prop}\label{miniprop}
For this setting, supposing the conditions \eqref{first_algebraic}, \eqref{diota}, \eqref{eq4} hold and the spinor $F$ is of a definite parity, we obtain a solution to the generalized SUGRA equations of motion on the manifold \eqref{resulting_symmetric_space}.
\end{prop}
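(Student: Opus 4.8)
The plan is to show that the algebraic data $(V_+\subset\g,\dv=0,F)$ built in \S\ref{blocks_together} solves the equivariant generalized type II SUGRA equations \eqref{equivsugra}, and then to push this solution through the pullback-and-reduction procedure of Example \ref{ex:eqPLII} to land on $\mathsf S\backslash\mathsf G/\mathsf H$. For the first part, recall that $F$ is $\s$-invariant and self-dual by construction, and that it is $D_0$-closed because $D_0$ annihilates $(S_\g)^{C\s}$; thus the conditions $R_{V_+}F=F$ and $D_{\dv}F=0$ of \eqref{equivsugra} already hold. By the Theorem reducing \eqref{equivsugra} to the algebraic system \eqref{equivsugraalgebraic}, it remains only to check \eqref{first_algebraic}, \eqref{second_algebraic} and \eqref{third_algebraic}: the first is a hypothesis, and by Lemma \ref{biglemma} the other two are, under hypothesis \eqref{diota}, equivalent to \eqref{eq4}, which is again a hypothesis. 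Hence $(V_+\subset\g,0,F)$ solves \eqref{equivsugra}.

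For the reduction step, take $\mathsf G$, $\mathsf S$, $\mathsf H$ as in the construction of the exact Courant algebroid (this presupposes $c_k\le0$, as is implicit in referring to the manifold \eqref{resulting_symmetric_space}); the $\s$-action on $\mathsf G/\mathsf H$ is free and proper, so $E'_\redu:=(\g\times\mathsf G/\mathsf H)/_{\mathsf S}\to\mathsf S\backslash\mathsf G/\mathsf H$ is an exact Courant algebroid, and pulling $(V_+,0,F)$ back to $\g\times\mathsf G/\mathsf H$ and then reducing yields $(V'_{+,\redu}\subset E'_\redu,\dv'_\redu,F'_\redu)$, which by Example \ref{ex:eqPLII} solves the generalized type II SUGRA equations (modified in general, ordinary with a dilaton when $\h$ is unimodular). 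The ``physical'' bookkeeping is routine: $\dim V_+=\dim\g/2-\dim\s=10$ and the Lorentzian signature of $\la,\ra|_{V_+}$ are already part of the setup of \S\ref{blocks_together}, and the hypothesis that $F$ have definite parity -- equivalently, that all volume-form monomials appearing in $\hat F$ have degrees of one fixed parity, which then forces $R_{V_+}\hat F$ to have the same parity because the Hodge star on the $10$-dimensional $\bw(\a_1^{\text{total}})^*$ preserves degree parity -- makes $F'_\redu$ a genuine even or odd Ramond-Ramond spinor. The identification \eqref{resulting_symmetric_space} then exhibits the solution on $AdS_m\times\mathsf A^{(1)}/\mathsf A_0^{(1)}\times\cdots\times(S^1)^{\dim\b}$.

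Almost all the content sits in Lemma \ref{biglemma} and in the characterization of \eqref{equivsugra} by \eqref{equivsugraalgebraic}, both already established, so I do not expect a serious obstacle: Proposition \ref{miniprop} is in essence an assembly of earlier results. The one point that deserves care is the transversality condition $\on{Ad}_g\h\cap(\s+V_+)=0$ from the footnote of Example \ref{ex:eqPLII}; because $\la,\ra|_{V_+}$ is indefinite, this is what guarantees that $V'_{+,\redu}\subset E'_\redu$ corresponds to an honest pair $(g,H)$ on \eqref{resulting_symmetric_space}. Verifying it uses the explicit block structure of $\g$, $\s$, $V_+$ and of the chosen $\h^{(k)}$'s (checked factor by factor, via the Iwasawa decomposition when $c_k<0$), and is the only place where the concrete form of the symmetric-space data genuinely enters.
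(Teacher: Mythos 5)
Your proposal is correct and follows essentially the same route as the paper, which states Proposition \ref{miniprop} without a separate proof precisely because it is the assembly you describe: the theorem reducing \eqref{equivsugra} to \eqref{equivsugraalgebraic}, Lemma \ref{biglemma} converting \eqref{second_algebraic}--\eqref{third_algebraic} into \eqref{eq4} under \eqref{diota}, the automatic $D_0$-closedness and self-duality of $F$, and the pullback-plus-reduction mechanism of Example \ref{ex:eqPLII} applied to the $\mathsf G$, $\mathsf S$, $\mathsf H$ constructed for the manifold \eqref{resulting_symmetric_space}. Your added remarks on parity and on the transversality condition from the footnote match the paper's own caveats and do not change the argument.
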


\subsection{Second ansatz - examples}
We now present several interesting special cases of the above construction. Since it is easy to check that in the following examples (\ref{diota}) holds, we will only focus on the analysis of the conditions (\ref{first_algebraic}) and (\ref{eq4}).
\subsubsection{}\label{easiest}
Let us first consider the case $N=1$, $\b=0$ with
\[f(h)=
\begin{cases}
  a, & \text{if } h=(1,0)\\
  0, & \text{otherwise}.
\end{cases}\]
This corresponds to $\a=\mathfrak{o}(m-1,2)\oplus \a^{(1)}$ with 
\[F=a(\omega_0+R_{V_+}\omega_0)=a(\omega_0-\nu i^m \omega_1).\]
Here $m\in\{2,\dots,8\}$. The conditions (\ref{first_algebraic}) and (\ref{eq4}) then give
\[(1+c_0)m=-\lambda_1(1+c_1)(10-m),\]
\[2(1-c_0)=a^2,\qquad 2(1-c_1)\lambda_1=-a^2,\] 
which can be recast conveniently as
\begin{equation*}
\frac{1+c_0}{1-c_0}\frac{m}{10-m}=\frac{1+c_1}{1-c_1},\quad \lambda_1=-\frac{1-c_0}{1-c_1},\quad a^2=2(1-c_0),
\end{equation*}
and yields a one-parameter family of solutions (parametrized e.g.\ by $c_0$). 
We thus have

\begin{thm}
For each real compact semisimple Lie algebra $\a(=\a^{(1)})$ with an involution ($\a=\a_0\oplus \a_1$), such that $\dim \a_1\in \{2,\dots,8\}$, we obtain, via the above construction, a one-parameter family of solutions to the generalized SUGRA equations of motion on the 10-dimensional Lorentzian manifold $AdS_m\times \mathsf{A}/\mathsf{A}_0$. 
\end{thm}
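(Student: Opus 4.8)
The plan is to realise the statement as the special case of Proposition \ref{miniprop} singled out in \S\ref{easiest}, and then to solve the resulting algebraic system explicitly and to check that its solutions lie in the range in which the constructions of \S\ref{blocks_together} apply. Concretely, given $\a$ with a non-trivial involution $\a=\a_0\oplus\a_1$ and $\dim\a_1\in\{2,\dots,8\}$, I would take $N=1$, $\b=0$, $\a^{(1)}:=\a$, and set $m:=10-\dim\a_1\in\{2,\dots,8\}$, so that $\a^{(0)}=\mathfrak{o}(m-1,2)$ has $\dim\a^{(0)}_1=m$ and hence $\dim V_+=m+\dim\a_1=10$; the inner product $\la\,,\,\ra|_{V_+}$ is Lorentzian because $K|_{\a^{(0)}_1}$ is Lorentzian while $K_1=\lambda_1^{-1}\mathcal{K}_1$ is positive definite once $\lambda_1<0$. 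The spinor is taken to be $F=a(\omega_0+R_{V_+}\omega_0)=a(\omega_0-\nu i^m\omega_1)$ as in \S\ref{easiest}, i.e.\ the ``second ansatz'' with $f$ supported on $h=(1,0)$.

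To invoke Proposition \ref{miniprop} I would first record two elementary facts. $F$ has a definite parity, since its two homogeneous components sit in degrees $m$ and $10-m$, which have the same parity as they sum to $10$. And condition \eqref{diota} holds: if $u,v$ lie in different blocks then $\iota_u\iota_v F=0$ identically, while if $u,v$ lie in a single block $\a^{(i)}_1$ (of dimension $m$ or $10-m$, in any case at most $8$) then $\iota_u\iota_v F$ is supported on that block and the only summand of $*F$ that could wedge with it to a top form is supported on the same block, so the wedge already vanishes there for degree reasons. With these in hand Proposition \ref{miniprop} applies, and specialising \eqref{first_algebraic} and \eqref{eq4} to this $f$ gives exactly the three scalar equations
\[
\frac{1+c_0}{1-c_0}\,\frac{m}{10-m}=\frac{1+c_1}{1-c_1},\qquad \lambda_1=-\frac{1-c_0}{1-c_1},\qquad a^2=2(1-c_0)
\]
recorded just before the theorem (the first from \eqref{first_algebraic}, the second and third from \eqref{eq4} with $k=1$ and $k=0$, using $\lambda_0=1$ and the Lorentzian sign $(-1)^{h(0)}$).

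It then remains to exhibit a one-parameter family of solutions of this system lying in the admissible range, i.e.\ with $a\neq 0$, with $\lambda_1<0$, and with $c_0,c_1\le 0$ (the last condition being what makes the subgroup $\mathsf H$ of \S\ref{blocks_together}, and hence the identification $\mathsf S\backslash\mathsf G/\mathsf H\cong AdS_m\times\mathsf A/\mathsf A_0$ of \eqref{resulting_symmetric_space}, available). I would parametrise by $c_0$: the first equation gives $c_1=(K-1)/(K+1)$ with $K=\tfrac{1+c_0}{1-c_0}\tfrac{m}{10-m}$, after which $\lambda_1$ and $a^2$ are determined. Since $K$ is increasing in $c_0$ on $(-1,1)$ and $c_1$ is increasing in $K$, one finds $c_1\le 0\iff c_0\le(5-m)/5$, and on the interval $c_0\in\bigl(-1,\min\{0,(5-m)/5\}\bigr]$, which is non-empty for every $m\in\{2,\dots,8\}$, one has $1-c_0>0$ and $1-c_1>0$, hence $a^2=2(1-c_0)>0$ and $\lambda_1=-(1-c_0)/(1-c_1)<0$; the data moreover vary non-trivially with $c_0$. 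This produces the asserted one-parameter family on $AdS_m\times\mathsf A/\mathsf A_0$.

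The heavy lifting (the generalized Ricci and scalar-curvature computations of Lemmas \ref{algebraic_ric} and \ref{biggamma}, together with the pull-back and the reduction to $\mathsf S\backslash\mathsf G/\mathsf H$) is entirely absorbed into Proposition \ref{miniprop} and the reduction theorems of \S\ref{sec:SUGRA}. I expect the only points requiring real care to be the sign bookkeeping that turns \eqref{first_algebraic} and \eqref{eq4} into the three displayed equations, and the verification that the admissible $c_0$-interval is non-empty for every $m\in\{2,\dots,8\}$.
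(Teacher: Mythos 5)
Your proposal is correct and follows essentially the same route as the paper: specialize the second ansatz of Proposition \ref{miniprop} to $N=1$, $\b=0$ with $F=a(\omega_0+R_{V_+}\omega_0)$, reduce \eqref{first_algebraic} and \eqref{eq4} to the three displayed scalar equations, and read off a one-parameter family parametrized by $c_0$. Your explicit verification of the parity of $F$, of condition \eqref{diota}, and of the non-emptiness of the admissible range $c_0,c_1\le 0$, $\lambda_1<0$ is a welcome tightening of details the paper leaves to the reader, but it is not a different argument.
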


Here we provide a list of irreducible\footnote{in the sense of Riemannian symmetric spaces} examples, writing $\mathsf{A}/\mathsf{A}_0$ and the dimension of the quotient for each case.
\begin{center}
\begin{tabular}{ c c }
 $S^d=SO(d+1)/SO(d)$ & d\\ 
 $\C \mathrm{P}^d=U(d+1)/(U(1)\times U(d))$ & 2d\\  
 $SU(4)/S(U(2)\times U(2))$ & 8\\  
 $Sp(2)/U(2)$ & 6\\
 $SO(6)/(SO(2)\times SO(4))$ & 8\\
 $G_2/SO(4)$ & 8\\
 $SU(3)/SO(3)$ & 5\\
 $SU(3)=(SU(3)\times SU(3))/SU(3)$ & 8\\
\end{tabular}
\end{center}
Here in the first entry $d\in\{2,\dots,8\}$ and in the second $d\in\{2,3,4\}$. The next three examples (as well as the projective spaces) are Grassmannian spaces. For reducible examples, we can consider the products of elements of the above list, e.g. $S^2\times S^2 \times S^3$ or $S^2\times Sp(2)/U(2)$.

\begin{rem}
Notice that in the case $m=\dim \mathsf{A}/\mathsf{A_0}=5$ the family of solutions simplifies to $c_0\in\R$, $c_1=c_0$, $\lambda_1=-1$, $a^2=2(1-c_0)$. It also contains $c_0=c_1=0$, which corresponds to a solution to (ordinary) supergravity equations. This one-parameter class recovers in particular the $\eta$-deformed $AdS_5\times S^5$, found in \cite{DMV}. In addition, from the above list we also get the cases $AdS_5\times S^3\times S^2$ and $AdS_5\times SU(3)/SO(3)$.
\end{rem}

\subsubsection{}
Let us again take the case $N=1$, $\b=0$ but now with $m\in\{3,\dots,7\}$ and
\[f(h)=
\begin{cases}
  a, & \text{if } h=(1,0)\\
  b, & \text{if } h=(0,0)\\
  0, & \text{otherwise}.
\end{cases}\]
This corresponds to $\a=\mathfrak{o}(m-1,2)\oplus \a^{(1)}$ with 
\[F=a(\omega_0+R_{V_+}\omega_0)+b(1+R_{V_+}1)=a(\omega_0- i^m \omega_1)+b(1+\omega_0\wedge \omega_1).\] 
The conditions (\ref{first_algebraic}) and (\ref{eq4}) give
\[(1+c_0)m=-\lambda_1(1+c_1)(10-m),\]
\[2(1-c_0)=a^2+b^2,\qquad 2(1-c_1)\lambda_1=-a^2+b^2,\] 
which yields a two-parameter class of solutions. Therefore

\begin{thm}
For each real compact semisimple Lie algebra $\a(=\a^{(1)})$ with an involution ($\a=\a_0\oplus \a_1$), such that $\dim \a_1\in \{3,\dots, 7\}$, we obtain, via the above construction, a two-parameter family of solutions to the generalized SUGRA equations of motion on the 10-dimensional Lorentzian manifold $AdS_m\times \mathsf{A}/\mathsf{A}_0$.
\end{thm}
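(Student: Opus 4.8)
The plan is to deduce the theorem directly from Proposition~\ref{miniprop}. One specialises the construction of \S\ref{blocks_together} to $N=1$, $\b=0$, $\a^{(0)}=\mf o(m-1,2)$ and $\a^{(1)}=\a$, so that $m=10-\dim\a_1\in\{3,\dots,7\}$, and takes $f$ supported on $h=(1,0)$ and $h=(0,0)$ with respective values $a$ and $b$; the resulting self-dual, $\s$-invariant spinor is
\[
F=a(\omega_0-i^m\omega_1)+b(1+\omega_0\wedge\omega_1)\in\bigl(\bw(\a_1^{\mathrm{total}})^*\bigr)^{\s}.
\]
As recorded in \S\ref{blocks_together} this is a legitimate instance: $\dim V_+=\dim\a^{(0)}_1+\dim\a^{(1)}_1=m+(10-m)=10$, and $\la,\ra|_{V_+}$ is Lorentzian, the single minus sign coming from $\a^{(0)}_1\subset\mf o(m-1,2)$; moreover, using the subgroup $\mathsf H$ of the following subsection (available once $c_0,c_1\le 0$) the reduced target \eqref{resulting_symmetric_space} is exactly $AdS_m\times\mathsf A/\mathsf A_0$. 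It thus remains to verify, for a two-parameter family of data $(c_0,c_1,\lambda_1,a,b)$ with $c_0,c_1\le 0$, the three hypotheses of Proposition~\ref{miniprop} for this $F$: that $F$ is of definite parity, that condition \eqref{diota} holds, and that equations \eqref{first_algebraic} and \eqref{eq4} are satisfied.

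The two ``structural'' hypotheses I would dispose of by degree bookkeeping in $\bw(\a_1^{\mathrm{total}})^*$. The summands of $F$ lie in degrees $0,m,10-m,10$, from which one checks that $F$ is of definite parity, so that Lemma~\ref{manysigns} fixes $\nu$. For \eqref{diota}, one writes $*F$ again as a combination of $1,\omega_0,\omega_1,\omega_0\wedge\omega_1$ and checks $(\iota_u\iota_v F\wedge*F)^{\mathrm{top}}=0$ for $u,v\in\a_1^{\mathrm{total}}$ summand by summand, using $\iota_u\iota_v1=0$ and the fact that $\iota_u\iota_v$ applied to $\omega_0$, $\omega_1$ or $\omega_0\wedge\omega_1$ only contracts $\a^{(0)}_1$- or $\a^{(1)}_1$-directions, so that each relevant wedge product either fails to reach the top degree or repeats a wedge factor. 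Granting \eqref{diota}, Lemmas~\ref{algebraic_ric}, \ref{biggamma}, \ref{psicko} and \ref{biglemma} reduce \eqref{equivsugra} for this data to $\mc R_{V_+,0}=0$, i.e.\ to \eqref{first_algebraic}, which here reads $(1+c_0)m=-\lambda_1(1+c_1)(10-m)$, together with \eqref{eq4}, which for our $f$ reads $2(1-c_0)=a^2+b^2$ and $2(1-c_1)\lambda_1=b^2-a^2$.

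It then only remains to analyse these three equations in the five unknowns $(c_0,c_1,\lambda_1,a,b)$. This is elementary: \eqref{first_algebraic} expresses $\lambda_1$ through $(c_0,c_1)$ (for $c_1\ne-1$), whereupon the two equations of \eqref{eq4} determine $a^2$ and $b^2$, leaving $(c_0,c_1)$ free, so the solution set is two-dimensional. I would then check that the admissibility constraints $c_0,c_1\in(-1,0]$, $\lambda_1<0$, $a^2>0$, $b^2>0$ still carve out a nonempty two-dimensional subregion for every $m\in\{3,\dots,7\}$ --- for instance $(c_0,c_1)=(-\tfrac12,0)$ is an admissible interior point throughout, yielding $\lambda_1=-\tfrac{m}{2(10-m)}$, $a^2=\tfrac{15-m}{10-m}$, $b^2=\tfrac{15-2m}{10-m}$ --- on which the subgroup $\mathsf H$ exists. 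Proposition~\ref{miniprop} then produces a two-parameter family of solutions of the generalized type~II SUGRA equations on $AdS_m\times\mathsf A/\mathsf A_0$, as claimed. (The further transversality condition of the footnote of Example~\ref{ex:eqPLII}, needed only to reinterpret these as honest geometric backgrounds rather than solutions at the Courant-algebroid level, is checked separately, e.g.\ by continuity from the transparent case $c_0=c_1=0$ where $V_+$ visibly gives a pseudo-Riemannian metric with $H=0$.)

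The step I expect to be the main obstacle is the verification of \eqref{diota} for this particular $F$: although it is ``only a computation'', one must track carefully the signs introduced by the Hodge star and by $\vartheta$ inside $*F$ and the Lorentzian factor $\a^{(0)}_1$, the real content being that every top-degree contribution cancels. A more routine but still delicate secondary point is to make sure the two-parameter family genuinely meets all the positivity constraints above, so that it is honestly two-dimensional on the stated manifold rather than degenerating to a lower-dimensional family.
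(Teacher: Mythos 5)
Your route is the same as the paper's: the proof given there consists precisely of invoking Proposition \ref{miniprop} (together with the blanket remark at the start of the second ansatz that \eqref{diota} is easy to check for these $F$'s), writing out \eqref{first_algebraic} and \eqref{eq4} for $f$ supported on $h=(1,0)$ and $h=(0,0)$ --- exactly your three displayed equations --- and counting five unknowns against three equations. Your additions (solving for $\lambda_1,a^2,b^2$ in terms of $(c_0,c_1)$, exhibiting the admissible point $(c_0,c_1)=(-\tfrac12,0)$ with $a^2=\tfrac{15-m}{10-m}$, $b^2=\tfrac{15-2m}{10-m}$, and recalling the choice of $\mathsf H$ and the transversality footnote) are correct and make the parameter count honest, which the paper does not bother to do.

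One step of yours is wrong as stated, and it touches a point the paper itself glosses over: the parity of $F$. The summands of $F$ sit in degrees $0$, $m$, $10-m$, $10$, so $F$ has definite (even) parity only when $m$ is even; for $\dim\a_1\in\{3,5,7\}$, i.e.\ $m$ odd, and $ab\neq 0$, the $a$-terms are odd while the $b$-terms are even, so ``one checks that $F$ is of definite parity'' fails, and the hypothesis of Proposition \ref{miniprop} is literally violated --- and these are exactly the cases where the second parameter is new, since within the regime $c_k\le 0$, $\lambda_1<0$ a single-parity choice forces $a=0$ to be impossible and $b=0$ returns you to the one-parameter family of the preceding example. The repair is to note that everything decouples by parity: $R_{V_+}$ and $D_0$ preserve parity, the cross terms between the even and odd parts of $F$ drop out of the spinor pairing, of \eqref{diota}, and of the computation in Lemma \ref{biglemma} for degree reasons, and the $\nu$-free formula of Lemma \ref{psicko} is exactly the sum over the two parity sectors each weighted with its own $\nu$; with the right-hand side of \eqref{es_b} read sector-wise (which is how the paper implicitly uses it here and in the subsequent $AdS_4\times SU(3)/SO(3)\times S^1$ example, where $F$ is again mixed), the argument goes through. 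Either add this remark or restrict the parity claim to even $m$; otherwise your verification of the hypotheses of Proposition \ref{miniprop} fails for three of the five values of $m$.
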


\subsubsection{}
We shall illustrate the case of $\b\neq 0$ on the example of $N=1$, $\b=\mathfrak{u}(1)$ for $AdS_4\times SU(3)/SO(3)\times S^1$ with 
\[f(h)=
\begin{cases}
  a, & \text{if } h=(1,0,0)\\
  b, & \text{if } h=(0,1,0)\\
  d, & \text{if } h=(0,0,1)\\
  0, & \text{otherwise},
\end{cases}\]
corresponding to 
\[F=a(\omega_0- \omega_1\wedge \omega_2)+b(\omega_1-\omega_0\wedge \omega_2)+d(\omega_2+\omega_0\wedge \omega_1).\]
The equations (\ref{first_algebraic}) and (\ref{eq4}) now read
\[4(1+c_0)=-5\lambda_1(1+c_1),\]
\[2(1-c_0)=a^2+b^2+d^2,\qquad 2(1-c_1)\lambda_1=-a^2-b^2+d^2,\qquad 0=-a^2+b^2-d^2,\]
producing again a two-parameter class of solutions.

\begin{rem}
It is clear that we can continue and consider other cases: $N>1$, $\dim \b>1$, \dots In this way we can produce other classes of solutions, some of them having more than 2 free parameters.
\end{rem}

\end{document}